\documentclass[a4paper]{amsart}
\usepackage[toc,title,page]{appendix}
\usepackage{amscd,amsthm,amsfonts,amssymb,amsmath,esint}
\usepackage{amsmath,tikz-cd}
\usepackage{float}
\restylefloat{table}
\usepackage{fullpage}
\usepackage[all]{xy}
\usepackage{mathtools}
\usepackage{cite}
\usepackage[colorlinks=false,
linkcolor=red,       
anchorcolor=red,  
citecolor=blue,        
]{hyperref}

\newcommand{\BA}{{\mathbb {A}}} 
\newcommand{\BC}{{\mathbb {C}}} 
 
\newcommand{\BG}{{\mathbb {G}}} \newcommand{\BH}{{\mathbb {H}}}

\newcommand{\BO}{{\mathbb {O}}} \newcommand{\BP}{{\mathbb {P}}}
\newcommand{\BQ}{{\mathbb {Q}}} \newcommand{\BR}{{\mathbb {R}}}
\newcommand{\BS}{{\mathbb {S}}}

 \newcommand{\BZ}{{\mathbb {Z}}}

\newcommand{\CA}{{\mathcal {A}}} 
\newcommand{\CC}{{\mathcal {C}}} 
\newcommand{\CE}{{\mathcal {E}}} \newcommand{\CF}{{\mathcal {F}}}
 \newcommand{\CH}{{\mathcal {H}}}
 
 \newcommand{\CL}{{\mathcal {L}}}
\newcommand{\CM}{{\mathcal {M}}} 
\newcommand{\CO}{{\mathcal {O}}} \newcommand{\CP}{{\mathcal {P}}}
\newcommand{\CQ}{{\mathcal {Q}}} \newcommand{\CR}{{\mathcal {R}}}
 \newcommand{\CT}{{\mathcal {T}}}
\newcommand{\CU}{{\mathcal {U}}} \newcommand{\CV}{{\mathcal {V}}}

\newcommand{\fg}{{\mathfrak{g}}} 
 
 \newcommand{\fl}{{\mathfrak{l}}}
 
\newcommand{\fo}{{\mathfrak{o}}} 
 
\newcommand{\fs}{{\mathfrak{s}}} \newcommand{\ft}{{\mathfrak{t}}}


\theoremstyle{plain}
\newtheorem{thm}{Theorem}[section] 
\newtheorem*{main thm}{Main Theorem} 
\newtheorem{cor}[thm]{Corollary}
\newtheorem{lem}[thm]{Lemma}

\newtheorem{ques}[thm]{Question} \newtheorem{prop}[thm]{Proposition}

\newtheorem{defn}[thm]{Definition}
 \newtheorem{lem-defn}[thm]{Lemma-Definition}
\newtheorem*{Notations}{Notations}

\newtheorem*{Main Problem}{Main Problem}

\theoremstyle{remark} \newtheorem{remark}[thm]{Remark}
\theoremstyle{remark} 
\theoremstyle{remark} \newtheorem{example}[thm]{Example}

\usepackage[skip=0.5\baselineskip]{caption}

\numberwithin{equation}{section}

\everymath{\displaystyle}
\author{Yingqi Liu}
\title{Moduli of codimension two linear sections of subadjoint varieties}
\address{Yingqi Liu,  AMSS, Chinese Academy of Sciences, 55 ZhongGuanCun East Road, Beijing, 100190, China and  University of Chinese Academy of Sciences, Beijing, China}
\email{liuyingqi@amss.ac.cn}
\begin{document}

	\maketitle
	\begin{abstract}
	 	Let $G$ be a simple algebraic group of type $F_{4}$, $E_{6}$, $E_{7}$ or $E_{8}$, and let $\mathfrak{g}$ be its Lie algebra. The adjoint variety $X_{ad} \subseteq \mathbb{P} \mathfrak{g}$ is defined as the unique closed orbit of the adjoint action of $G$ on $\mathbb{P}\mathfrak{g}$.  $X_{ad}$ is a Fano contact manifold covered by lines in $\BP \fg$. 
	 	The subadjoint variety $S \subseteq \BP W$ is denoted by the variety of lines on $X_{ad}$ through a fixed point $x$, where $W \subseteq  T_{x}X$ is taken as the contact hyperplane. It follows from a result in representation theory of Vinberg that the GIT quotient space of codimension two linear sections of $S$ is isomorphic to the weighted projective space $\BP(1,3,4,6)$. In this note, we investigate the problem of finding a geometric interpretation of the above isomorphism. As a main result, for each $\fg$ of the above type, we construct a natural open embedding of the GIT quotient space of nonsingular codimension two linear sections of $S$ into $\BP(1,3,4,6)$ whose complement is a fixed hypersurface of degree 12. The key ingredient of our construction is to apply a correspondence of Bahargava and Ho which relates the above moduli problem to a moduli problem on curves of genus one.
		\end{abstract}
	
	\section{Introduction}
	Let $G$ be a simply connected simple algebraic group over $\BC$, and let $\mathfrak{g}$ be its Lie algebra. The adjoint variety $X_{ad} \subseteq \mathbb{P} \mathfrak{g}$ is defined as the unique closed $G$-orbit of the adjoint action on $\mathbb{P}\mathfrak{g}$. $X_{ad}$ is a Fano cantact manifold, and it is  covered by lines when $G$ is not of type $A$ or $C$. In this case, the subadjoint variety $S \subseteq \mathbb{P}W$ is denoted by the variety of lines on $X_{ad}$ through a fixed point $x$, where we identify a line through $x$ with its tangent direction at $x$, and $W \subseteq T_{x}X$ is taken as the contact hyperplane. $S$ is a rational homogenous space under the action of the isotropy subgroup of $G$. In fact let $H$ be the semi-simple part of the isotropy subgroup of $G$ at $x$. Then $W$ is an irreducible subspace under the isotropy action of $H$ on $T_{x}X$. And $S$ is the unique closed orbit of the action of $H$ on $\mathbb{P}W$. The contact structure on $X_{ad}$ gives rise to a $H$-invariant nondegenerate skew bilinear form $w$ on $W$, with respect to which $S \subseteq \BP W$  is known as a Legendrian variety. The importance of understanding the geometry of subadjoint varieties (especially as Legendrian varieties) stems from the celebrated conjecture by Lebrun and Salamon, which asserts that adjoint varieties are the only examples of Fano contact manifolds. We refer to \cite{LM07} for more details.\par
	Classical  examples of subadjoint varieties are the twisted cubic in $\BP^{3}$, and the products $\BP^{1} \times \BQ^{n}$ of a line and a smooth hyperquadric of dimension $n \geqslant 1$, which correspond to $\fg=\fg_{2}$ and $\fg=\fs \fo_{n+6}$ respectively. In this article we focus on the following five subadjoint varieties, where we use $H_{G},W_{G}$ and $S_{G}$ to express the objects associated with a fixed group $G$.\par 
	\begin{table}[h!]
		\centering
		\begin{tabular}{|c|c|c|c|c|}
			\hline
			$\text{Type of\,\,\,}\mathfrak{g}$ & $H_{G} $ & Rep.$W_{G}$ &  $S_{G} \subseteq \mathbb{P}W_{G} $ & $dim(S_{G})$\\
			\hline 	
			$D_{4}$ & $SL_{2} \times SL_{2} \times SL_{2}$ & $\mathbb{C}^{2} \otimes \mathbb{C}^{2} \otimes \mathbb{C}^{2}$ & $\mathbb{P}^{1} \times \mathbb{P}^{1}\times \mathbb{P}^{1}, Segre$ & 3\\
			$F_{4}$ & $SP_{6}$ & $ \wedge^{3}_{0} \mathbb{C}^{6}$ &  $Lag(3,6)$, Pl{\"u}cker  & 6\\
			$E_{6}$ & $SL_{6}$ & $ \wedge^{3} \mathbb{C}^{6}$ &  $Gr(3,6)$, Pl{\"u}cker & 9\\
			$E_{7}$ & $Spin_{12}$ & half spin  & $\BS_{6}$, minimal &15 \\
			$E_{8}$ & $E_{7}$ & minuscule  &  $E_{7}/P_{7}$, minimal & 27\\ \hline 
		\end{tabular} 
		\caption{}
		\label{tab:my_label}
	\end{table}
The subadjoint varieties listed in Table 1, together with the twisted cubic, are also known as the varieties in the third row of $geometric$ $Freudenthal$-$Tits$ $magic$ $rectangle$ \cite[Section 6]{Ma13}. They can be constructed in a uniform way as twisted cubic ``curves'' over Hermitian cubic Jordan algebras (see \ref{bir_map}). 
 In this note we focus on the study of their linear sections.
 It is known that there are exactly four orbits of the action of $H$ on $\mathbb{P}W$ (see Theorem \ref{proj_geom}). It follows that all nonsingular hyperplane sections are isomorphic to each other. Moreover when $\fg$ is not of type $D_{4}$ it is proved in \cite{CFL} that these nonsingular hyperplane sections are rigid under smooth deformation. \par 
	The main goal of this note is to describe the GIT quotient space of codimension two linear sections of the subadjoint variety $S \subseteq \mathbb{P}W$ listed in Table 1. 
 Under the identification $W \cong W^{\vee}$ via $w$, for a plane $[U] \in Gr(2,W)$, we denote $\BP U^{\perp} \subseteq \BP W$ to be the linear subspace isotropic to $U$, and denote $S_{U}=\BP U^{\perp} \cap S$ to be the associated linear section. We are going to study the GIT quotient of the $H$-action on the pair $(Gr(2,W)),\mathcal{O}_{Gr}(1))$, where $\mathcal{O}_{Gr}(1)$ is the Pl{\"u}cker line bundle on $Gr(2,W)$, and we fix the linearization of $H$ on it provided by the linear action of $H$ on $W$. Recall that the GIT quotient is given by $Gr(2,W)^{ss}//H=Proj(R_{G})$, 
	 where $R_{G}=\oplus_{k \geqslant 0} H^{0}(Gr(2,W),\mathcal{O}_{Gr}(k))^{H}$ is
	the ring of $H$-invariant sections. Note that $R_{G}$ can be naturally identified with the invariant ring $\mathbb{C}[\mathbb{C}^{2} \otimes W]^{SL_{2} \times H}$, under which an invariant section of degree $d$ is identified with an invariant function of degree $2d$. A key motivation of our study is the following description of $R_{G}$ arising from Vinberg's theory of $\theta$-groups.
		\begin{thm}\cite{Vi}\label{mot_}
	Let $G$ be a simple algebraic group. The invariant ring $\mathbb{C}[\mathbb{C}^{2} \otimes W]^{SL_{2} \times H}$ is a polynomial ring. If $G$ is of type $F_{4},E_{6},E_{7}$ or $E_{8}$, the invariant ring $\mathbb{C}[\mathbb{C}^{2} \otimes W]^{SL_{2} \times H}$ is a polynomial ring in four generators, of degree $2,6,8,12$ respectively. In particular  $Proj(R_{G}) \cong \mathbb{P}(1,3,4,6)$ is a weighted projective space of dimension 3. 
	\end{thm}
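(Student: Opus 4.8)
The plan is to identify $(SL_{2}\times H,\ \BC^{2}\otimes W)$ with the $\theta$-group attached to a $\BZ/2$-grading of $\fg$ itself, and then to extract the invariant ring from Vinberg's theory of graded Lie algebras. Recall that $\fg$ carries its contact grading
\[
\fg=\fg_{-2}\oplus\fg_{-1}\oplus\fg_{0}\oplus\fg_{1}\oplus\fg_{2},
\qquad \dim\fg_{\pm2}=1,\quad \fg_{\pm1}\cong W,\quad \fg_{0}=\BC\oplus\fh,
\]
where $\fh=\Lie(H)$ and the contact form $w$ on $W$ is, up to scalar, the bracket $\fg_{1}\wedge\fg_{1}\to\fg_{2}\cong\BC$. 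Passing to the associated $\BZ/2$-grading, with $\fg^{\bar0}=\fg_{-2}\oplus\fg_{0}\oplus\fg_{2}$ and $\fg^{\bar1}=\fg_{-1}\oplus\fg_{1}$, one checks that $[\fg_{2},\fg_{-2}]\subseteq\fg_{0}$ is spanned by the grading element, so that $\fg_{-2}\oplus[\fg_{2},\fg_{-2}]\oplus\fg_{2}$ is a copy of $\fs\fl_{2}$ centralizing $\fh$; hence $\fg^{\bar0}\cong\fs\fl_{2}\oplus\fh$, and as a module over it $\fg^{\bar1}\cong\BC^{2}\otimes W$, the summands $\fg_{\pm1}$ combining into the standard $\fs\fl_{2}$-representation with multiplicity space $W\cong\fg_{1}$. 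First I would verify these identifications carefully from the structure of the contact grading; then, up to isogeny, the pair in the theorem is exactly the isotropy representation $(G^{\bar0},\fg^{\bar1})$ of this symmetric decomposition.

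Next I would invoke Vinberg's results \cite{Vi}: for any $\theta$-group the algebra of invariants is a polynomial algebra, and restriction to a Cartan subspace $\fc\subseteq\fg^{\bar1}$ (a maximal subspace of semisimple elements) induces an isomorphism
\[
\BC[\fg^{\bar1}]^{G^{\bar0}}\ \xrightarrow{\ \sim\ }\ \BC[\fc]^{W_{\theta}},
\]
where $W_{\theta}=N_{G^{\bar0}}(\fc)/Z_{G^{\bar0}}(\fc)$ is the little Weyl group; for a $\BZ/2$-grading $W_{\theta}$ is the Weyl group of the restricted root system, in particular a finite reflection group. Since the invariants of a connected linear group depend only on its Lie algebra action, $\BC[\BC^{2}\otimes W]^{SL_{2}\times H}=\BC[\fg^{\bar1}]^{G^{\bar0}}$; this already yields the first assertion (polynomiality) for every $\fg$ admitting a contact grading, the number of generators being $\dim\fc$ and their degrees being the degrees of the basic invariants of $W_{\theta}$.

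It then remains to determine $W_{\theta}$ when $\fg$ is of type $F_{4},E_{6},E_{7},E_{8}$. The $\BZ/2$-grading above corresponds to the real form of $\fg$ whose maximal compact subalgebra is $\fs\fu_{2}\oplus\fh_{c}$, with $\fh_{c}$ the compact form of $\fh$; inspecting the classification of real simple Lie algebras, these real forms are respectively $F_{4(4)}$, $E_{6(2)}$, $E_{7(-5)}$ and $E_{8(-24)}$, all of real rank $4$ and with restricted root system of type $F_{4}$. Hence $\dim\fc=4$ and $W_{\theta}\cong W(F_{4})$, whose four basic invariants have degrees $2,6,8,12$; so $\BC[\BC^{2}\otimes W]^{SL_{2}\times H}$ is a polynomial ring on generators of these degrees. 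Finally, under the identification of $R_{G}$ with this invariant ring a section of degree $d$ corresponds to a function of degree $2d$, and since all four generators lie in even degree, $R_{G}$ is itself polynomial on generators of degrees $1,3,4,6$; therefore $\Proj(R_{G})\cong\BP(1,3,4,6)$.

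The step I expect to demand the most care is the module identification $\fg^{\bar1}\cong\BC^{2}\otimes W$ over $\fg^{\bar0}\cong\fs\fl_{2}\oplus\fh$: one must match the ``external'' $SL_{2}$ with the one produced by $\fg_{\pm2}$ and confirm that $W$ occurs carrying precisely its contact symplectic form. A second, more bookkeeping-level point is the passage from the abstract little Weyl group $W_{\theta}$ to the explicit restricted root system $F_{4}$, which either rests on the tables of real forms and symmetric spaces or can be obtained uniformly from the Freudenthal--Tits description of $S$ recalled in the introduction.
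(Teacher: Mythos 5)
Your proposal is correct, and it follows essentially the paper's own route: the paper gives no argument but simply quotes the result from Vinberg \cite{Vi}, and your derivation --- realizing $(SL_{2}\times H,\ \BC^{2}\otimes W)$ as the isotropy representation of the $\BZ/2$-grading induced by the contact grading of $\fg$, then applying Vinberg's restriction theorem with little Weyl group $W(F_{4})$ (real forms $F_{4(4)}$, $E_{6(2)}$, $E_{7(-5)}$, $E_{8(-24)}$, restricted root system $F_{4}$, degrees $2,6,8,12$) --- is exactly the intended content of that citation. The final passage from function degrees $2,6,8,12$ to $\mathrm{Proj}(R_{G})\cong\BP(1,3,4,6)$ via the degree-halving identification is also as in the paper.
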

	\begin{remark}
		If $G$ is of type $D_{4}$,  then the invariant ring $\mathbb{C}[\mathbb{C}^{2} \otimes W]^{SL_{2} \times H}$ is a polynomial ring in four generators, of degree 2,4,4,6 respectively.
		If we additionally consider the action of $S_{3}$ on $W=\BC^{2} \otimes \BC^{2} \otimes \BC^{2} $ given by permutations of factors, then the ring $\mathbb{C}[\mathbb{C}^{2} \otimes W]^{SL_{2} \times H \times S_{3}}$ becomes also a polynomial ring in four generators, of degree $2,6,8,12$ respectively. We will explain these facts in the Appendix, see (A.5).
	\end{remark}
It is remarkable that $Proj(R_{G})$ is independent of $G$ when $G$ is of type $F_{4},E_{6},E_{7}$ or $E_{8}$. This motivates us to investigate the following problem which seeks for a geometric interpretation.
	\begin{Main Problem}\label{mp}
		Assume $G=F_{4},E_{6},E_{7}$ or $E_{8}$, then:\par 
		1. Provide a geometric interpretation for the generators of $R_{G}$, and use it to characterize the semi-stable locus $Gr(2,W_{G})^{ss}$.\par 
		2. Construct geometric identifications between the GIT quotients $Gr(2,W_{G})^{ss}//H_{G}$ when $G$ varies.
	\end{Main Problem}
Following Remark 1.2, our main idea to tackle the problem is to compare it to the case of $G=D_{4}$. On one direction, there exists an embedding of representations arsing from the construction of the representation $(H,W)$  in \cite{LM02} using composition algebras:
\begin{prop}(Corollary \ref{S3})
		Let $G$ be of type $F_{4},E_{6},E_{7}$ or $E_{8}$. Then there exists an embedding of representations $(Lie(H_{D_{4}}),W_{D_{4}}) \subseteq (Lie(H_{G}),W_{G})$. Moreover let $\overline{H}_{G} \subseteq PGL(W_{G})$ be the image of $H_{G}$ in $PGL(W_{G})$. Then there is a subgroup $S_{3} \subseteq \overline{H}_{G}$ which acts on $\mathbb{P}W_{D_{4}}=\mathbb{P} (\mathbb{C}^{2} \otimes \mathbb{C}^{2} \otimes \mathbb{C}^{2}) $ by permutations of factors.
\end{prop}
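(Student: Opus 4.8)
The plan is to read the statement off the explicit model of the pair $(H_G,W_G)$ produced by the composition-algebra construction of Landsberg--Manivel \cite{LM02} and recalled in \ref{bir_map}. In that model one attaches to $G$ the complexified composition algebra $\mathbb{A}_G$ (of dimension $1,2,4,8$ for $G=F_4,E_6,E_7,E_8$) and the cubic Jordan algebra $J_G=\mathcal{H}_3(\mathbb{A}_G)$ of Hermitian $3\times 3$ matrices, equipped with its cubic norm $N_G$, its adjoint (sharp) map $x\mapsto x^{\#}$ and its trace pairing; then $W_G$ is the Freudenthal triple system $\mathbb{C}\oplus J_G\oplus J_G^{\vee}\oplus\mathbb{C}$ with its symplectic form $w$ and quartic invariant, and $Lie(H_G)$ is the algebra of infinitesimal automorphisms of this structure, built functorially from $J_G$ (concretely a Tits--Kantor--Koecher type algebra $\mathfrak{str}(J_G)\oplus J_G\oplus J_G^{\vee}$). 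The same recipe applied to $G=D_4$ uses the split cubic \'etale algebra $J_{D_4}=\mathbb{C}^{3}$ with $N(d)=d_1d_2d_3$ and $d^{\#}=(d_2d_3,d_1d_3,d_1d_2)$; a direct check gives $W_{D_4}\cong\mathbb{C}^{2}\otimes\mathbb{C}^{2}\otimes\mathbb{C}^{2}$ and $Lie(H_{D_4})\cong\mathfrak{sl}_2^{\oplus 3}$, with the $i$-th factor $\langle e_i,h_i,f_i\rangle$ acting on the $i$-th tensor slot and corresponding, in the above description, to the $i$-th summand of $J_{D_4}\oplus L(\mathbb{C}^{3})\oplus J_{D_4}^{\vee}$.

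Granting this, the first step is to note that $J_{D_4}=\mathbb{C}^{3}$ sits inside every $J_G=\mathcal{H}_3(\mathbb{A}_G)$ as the subalgebra $\delta(\mathbb{C}^{3})$ of diagonal matrices, and that this inclusion intertwines all the relevant structure maps: the Jordan product of diagonal matrices is componentwise, $N_G$ restricts to $d_1d_2d_3$, $\delta(d)^{\#}$ remains diagonal, and the trace pairing of $J_G$ restricts to that of $\mathbb{C}^{3}$. Consequently $\delta$ induces an embedding of Freudenthal triple systems $\psi\colon W_{D_4}\hookrightarrow W_G$ --- via $\delta$ on the $J$-summand, via the trace-form adjoint of $\delta$ on the $J^{\vee}$-summand, and via the identity on the two copies of $\mathbb{C}$ --- preserving $w$ and the quartic form, together with an embedding of Lie algebras $\phi\colon Lie(H_{D_4})\hookrightarrow Lie(H_G)$ sending $L(\mathbb{C}^{3})\hookrightarrow\mathfrak{str}(J_G)$ by $d\mapsto L_{\delta(d)}$ and the two outer summands into $J_G$ and $J_G^{\vee}$ in the evident way. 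The intertwining identity $\psi(\xi\cdot v)=\phi(\xi)\,\psi(v)$ then holds because the action of $\mathfrak{aut}(W)$ on $W$ is expressed entirely through the Jordan data that $\delta$ respects. This gives the first assertion.

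For the $S_3$ assertion, take a permutation matrix $P_{\sigma}$ (entries $0,1\in\mathbb{C}$) and consider conjugation $A\mapsto P_{\sigma}AP_{\sigma}^{-1}$ on $\mathcal{H}_3(\mathbb{A}_G)$; since $P_{\sigma}$ has $\mathbb{C}$-entries this is just a simultaneous row-and-column permutation of the matrix entries, so it is well defined and associative even when $\mathbb{A}_G$ is the split octonions, it preserves Hermitian matrices ($P_{\sigma}^{-1}=P_{\sigma}^{T}$), and it is a Jordan algebra automorphism of $J_G$. It therefore extends to an automorphism of the triple system $W_G$, and composing with $\Aut(W_G)\to\mathrm{PGL}(W_G)$ yields a homomorphism $S_3\to\overline{H}_G$ (the scalar ambiguity $\det P_{\sigma}=\sgn(\sigma)$ dies in $\mathrm{PGL}$). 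This $S_3$ restricts on $\delta(\mathbb{C}^{3})$ to the coordinate permutation, which by the weight bookkeeping of the first paragraph is exactly the permutation of the three tensor factors on $\mathbb{P}W_{D_4}=\mathbb{P}(\mathbb{C}^{2}\otimes\mathbb{C}^{2}\otimes\mathbb{C}^{2})$; and since conjugation by $P_{\sigma}$ permutes the three $\mathfrak{sl}_2$-factors of $\phi(Lie(H_{D_4}))$, the group $S_3$ normalizes $\overline{H}_{D_4}\subseteq\overline{H}_G$, which is what is needed for the refined statement in the remark following Theorem \ref{mot_}.

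The only point requiring genuine care is the functoriality of the Landsberg--Manivel model under $\delta$: that the symplectic form, the quartic form and the bracket of $Lie(H_G)$, all assembled from $N_G$, the $\#$-map, the quadratic operators $U_x$ and the trace pairing, really are intertwined by $\psi$ and $\phi$. For $G=F_4,E_6,E_7$ the algebras $\mathcal{H}_3(\mathbb{A}_G)$ are special and this is a routine $3\times 3$ matrix verification; for $G=E_8$, where $\mathcal{H}_3(\mathbb{A}_G)$ is exceptional, one must be slightly careful with the non-associative Jordan identities, but $\delta(\mathbb{C}^{3})$ is generated by the three diagonal primitive idempotents, on which $U$, $\#$ and the trace form all restrict by definition, so non-associativity never enters and the check is uniform. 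As an independent cross-check I would also verify the embeddings by hand: writing $\mathbb{C}^{6}=(\mathbb{C}^{2})^{\oplus 3}$ (resp. the half-spin or minuscule module) and letting $\SL_2^{3}$ be the evident block subgroup of $\Sp_6,\ \SL_6,\ \mathrm{Spin}_{12},\ E_7$, the branching of $W_G$ to $\SL_2^{3}$ contains $\mathbb{C}^{2}\otimes\mathbb{C}^{2}\otimes\mathbb{C}^{2}$ as the "one factor from each block" summand, and the block permutations realize the required $S_3$.
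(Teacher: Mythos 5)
Your route is genuinely different from the paper's and, in outline, it works. The paper argues in the triality model of Theorem 2.10: it defines an explicit $S_{3}$-action on the Lie algebra $\mathfrak{g}(\BA,\BH_{\BC})$ permuting the summands $\BA_{i}\otimes\BH_{i}$, invokes uniqueness of the irreducible module of dimension $\dim V_{\BA}$ to obtain a projective action $f\colon S_{3}\to PGL(V_{\BA})$, and then must show a posteriori that each $F_{g}$ preserves the quartic $D$ and the skew form $w$ --- which is done via Lemma 2.14 together with the orbit and line geometry of $S_{G}$ (Theorems 2.12 and 2.13) --- before concluding $f(g)\in\overline{H}_{G}$. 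You instead work directly in the cubic Jordan algebra model of the paper's Theorem 2.4 and Example 2.6: embed $\CH_{3}(0)=\BC^{3}$ as the diagonal of $\CH_{3}(\BA)$, hence $\CC_{\CH_{3}(0)}\subseteq\CC_{\CH_{3}(\BA)}$, and realize $S_{3}$ by conjugation with permutation matrices, extended to the Zorn space by acting on both $J$-summands and trivially on the two scalar slots. In this model the preservation of $\det$ and $w$ is immediate from formulas (2.1)--(2.2), since permutation-conjugation (scalar entries, so non-associativity of $\BO_{\BC}$ never enters) is a Jordan automorphism preserving $N$, $\#$ and the trace form; in particular the content of Lemma 2.14 comes for free, and the restriction to the diagonal Zorn space is visibly the permutation of tensor factors in the coordinates of Example 2.6. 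The price is the functoriality check for the embedding $\mathfrak{sl}_{2}^{\times 3}\hookrightarrow Lie(H_{G})$ compatible with $W_{D_{4}}\subseteq W_{G}$, which you correctly flag and which is a finite computation with diagonal elements.

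The one step you do not address --- and it is precisely where the paper has to work --- is why your $S_{3}$ lands in $\overline{H}_{G}$, i.e.\ in the image of the connected group $H_{G}$, rather than merely in the image of $\widetilde{H}_{J}$ in $PGL(W_{G})$ (cf.\ Remark 2.5). For $F_{4},E_{7},E_{8}$ this is automatic because $\widetilde{H}_{J}$ is connected (the paper quotes [Kr, Prop.\ 16(ii)]), but for $E_{6}$ the group $\widetilde{H}_{J}$ is disconnected and its image in $PGL(\wedge^{3}\BC^{6})$ strictly contains $\overline{H}_{E_{6}}$; the paper disposes of this by showing the permutation comes from an element of $SL_{6}$. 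In your setup the fix is short --- conjugation by a permutation matrix on $\CH_{3}(\BC\oplus\BC)\cong M_{3}(\BC)$ is an inner automorphism, hence lies in a connected subgroup of $\widetilde{H}_{J}$ --- but as written, the assertion that composing with $\mathrm{Aut}(W_{G})\to PGL(W_{G})$ ``yields a homomorphism $S_{3}\to\overline{H}_{G}$'' is unjustified in the $E_{6}$ case and should be repaired along these lines.
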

For the converse, to associate an $H_{G}$-equivalent class of planes in $W_{G}$ to an $H_{D_{4}} \times S_{3}$-equivalent class of planes in $W_{D_{4}}$, we apply the following correspondence constructed by Bhargava and Ho in \cite{BH}.
	\begin{defn}
		The tangent variety $TS$ of $S \subseteq \mathbb{P}W$ is a quartic hypersurface (Theorem \ref{proj_geom}).	A plane $U \subseteq W$ is called nondegenerate if $TS$ intersects $\mathbb{P}U$ at four distinct points. For a tensor $v \in \mathbb{C}^{2} \otimes W$, we say it is $nondegenerate$ if it defines a nondegenerate plane in $W$.
	\end{defn}
	\begin{thm}\cite[Theorem 6.28]{BH}\label{cr}
		There is a one to one correspondence between nondegenerate $GL_{2} \times H$-orbits on $\mathbb{C}^{2} \otimes W$ and isomorphic classes of triples $(C,L,\kappa: C \rightarrow S)$, where $C$ is a nonsingular projective curve of genus one, $L$ is a line bundle on $C$ of degree 2, and $\kappa$ is a morphism from $C$ to $S$ such that $\kappa^{*}(\mathcal{O}_{S}(1)) \cong L^{\otimes 3}$. 
	\end{thm}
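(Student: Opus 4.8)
The plan is to construct the correspondence explicitly in both directions and then check that the two constructions are mutually inverse and $GL_{2}\times H$-equivariant. The bridge between the two sides is the classical ``binary quartic'' model of genus one curves: a line in $\BP W$ together with the restriction to it of the quartic equation of $TS$ carries exactly the data of a double cover of $\BP^{1}$. Throughout we use two facts about the minimal $H$-equivariant embedding $S\subseteq\BP W$ of Table 1: it is projectively normal, so that $W^{\vee}\cong H^{0}(S,\mathcal{O}_{S}(1))$; and there is an $H$-invariant quartic form $q$ on $W$ with $TS=\{q=0\}$, which is a hypersurface by Theorem \ref{proj_geom}.

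\emph{From a nondegenerate tensor to a triple.} Let $v\in\BC^{2}\otimes W$ be nondegenerate and put $V:=\BC^{2}$. Viewing $v$ as a linear map $\phi_{v}\colon V^{\vee}\to W$, nondegeneracy forces $\phi_{v}$ to be injective with image a plane $U$, so it induces an embedding $\BP(V^{\vee})\hookrightarrow\BP W$ whose image is the line $\ell:=\BP U$. Pulling back the quartic gives a binary quartic form $f_{v}:=\phi_{v}^{*}q\in\Sym^{4}V$, with $\{f_{v}=0\}=\BP U\cap TS$; by the definition of nondegeneracy $f_{v}$ has four distinct roots. Let $\pi\colon C\to\ell$ be the double cover branched along $\{f_{v}=0\}$ (affine model $y^{2}=f_{v}(t)$); then $C$ is smooth projective of genus one, $L:=\pi^{*}\mathcal{O}_{\ell}(1)$ has degree $2$, and $H^{0}(C,L)\cong H^{0}(\ell,\mathcal{O}_{\ell}(1))\cong V$ canonically. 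To build $\kappa$ one uses the description of $S$ as a twisted cubic ``curve'' over a (Hermitian) cubic Jordan algebra recalled in \ref{bir_map}: with respect to it $W$ carries a cubic form and a quadratic adjoint-type map, and from these, restricted along $\ell$, one manufactures for every $\varphi\in W^{\vee}$ a distinguished section of $H^{0}(C,L^{\otimes 3})$ whose invariant part is a cubic form in the coordinate on $\ell$ and whose anti-invariant part is $y$ times the linear form $\varphi\circ\phi_{v}\in H^{0}(C,L)$. These assemble into a morphism $\kappa\colon C\to\BP W$ with image in $S$, and by construction $\kappa^{*}\mathcal{O}_{S}(1)\cong L^{\otimes 3}$. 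As a sanity check, $\pi$ ramifies exactly over $\BP U\cap TS$, each of whose four points lies on a unique tangent line to $S$, and $\kappa$ carries the corresponding ramification point to the contact point of that tangent line with $S$.

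\emph{From a triple to a tensor.} Conversely, given $(C,L,\kappa)$ set $\BC^{2}:=H^{0}(C,L)$, which is two-dimensional since $\deg L=2$ and $g(C)=1$. Projective normality gives $W^{\vee}\cong H^{0}(S,\mathcal{O}_{S}(1))$, so $\kappa$ induces $\kappa^{*}\colon W^{\vee}\to H^{0}(C,\kappa^{*}\mathcal{O}_{S}(1))=H^{0}(C,L^{\otimes 3})$. The complete linear system $|L|$ realizes $C$ as a double cover of $\BP^{1}=\BP(H^{0}(C,L)^{\vee})$ with involution $\iota$; decomposing into $\iota$-eigenspaces, the anti-invariant part of $H^{0}(C,L^{\otimes 3})$ is canonically, up to scalar, $H^{0}(C,L)=\BC^{2}$, which yields a projection $\rho\colon H^{0}(C,L^{\otimes 3})\twoheadrightarrow\BC^{2}$. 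Then $\rho\circ\kappa^{*}\in\Hom(W^{\vee},\BC^{2})=\BC^{2}\otimes W$ is the tensor $v$ attached to the triple; the scalar ambiguity in $\rho$ is absorbed by the scaling action of $GL_{2}$ on $\BC^{2}$. One then checks, using the degree and genus hypotheses, that $v$ is nondegenerate: its associated binary quartic $\phi_{v}^{*}q$ recovers the branch quartic of $|L|$, which has distinct roots since $C$ is smooth.

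\emph{Mutual inverseness, equivariance, and the main obstacle.} That the two constructions are inverse to each other is then bookkeeping: the curve built from $f_{v}=\phi_{v}^{*}q$ has $|L|$-branch locus $\{f_{v}=0\}$ and $\deg L=2$, matching the input, and the anti-invariant component of $\kappa^{*}$ reproduces $\phi_{v}$ by construction; conversely, re-running the forward construction on the triple associated to $(C,L,\kappa)$ returns the same triple because $S$ is projectively normal and a genus one curve with a degree $2$ line bundle is determined by its branch quartic together with the extra structure. Equivariance under $GL_{2}\times H$ is immediate, as every ingredient ($\phi_{v}$, the pullback of $q$, the double cover, $\mathcal{O}_{S}(1)$, the $\iota$-eigenspace decomposition) is natural, $H$ acting on $W$ and fixing $q$ up to scalar while $GL_{2}$ acts through $\BC^{2}$. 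The genuine difficulty --- the step I expect to be the crux --- is the construction and verification of $\kappa$: producing a morphism $C\to S$ with $\kappa^{*}\mathcal{O}_{S}(1)\cong L^{\otimes 3}$ out of $v$, and checking that the reverse assignment $\rho\circ\kappa^{*}$ lands in the nondegenerate locus and inverts it, is exactly where the exceptional structure of $(H,W)$ as a Freudenthal triple system / the cubic Jordan algebra model enters, and it must be established either uniformly through that structure or case by case for the four types in Table 1.
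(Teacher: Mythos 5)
This statement is quoted from \cite[Theorem 6.28]{BH}; the paper does not reprove it, but it does recall the relevant construction (Definition 2.16 and the surrounding diagram, plus \cite[Construction 6.30]{BH} in the Appendix), and measured against that your proposal has a genuine gap precisely where you yourself locate the crux. Your reverse direction (from $(C,L,\kappa)$ to a tensor, via the anti-invariant part of $H^{0}(C,L^{\otimes 3})$ under the involution of the double cover $C\to\BP H^{0}(C,L)^{\vee}$) is essentially Bhargava--Ho's Construction 6.30, as used in the paper's proof of Proposition \ref{D_{4}}(2), and is fine in outline. But in the forward direction the construction of $\kappa$ is only asserted: ``one manufactures for every $\varphi\in W^{\vee}$ a distinguished section \dots\ these assemble into a morphism $\kappa\colon C\to\BP W$ with image in $S$, and by construction $\kappa^{*}\CO_{S}(1)\cong L^{\otimes 3}$.'' No recipe is given for these sections, no reason is given why the resulting map lands in $S$ rather than merely in $\BP W$, and no proof is given of the degree relation. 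Since this is where the entire exceptional structure of $(H,W)$ enters, the argument as written does not establish the correspondence; it restates it.

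The way the paper (following BH) avoids this difficulty is worth noting, because it inverts your order of construction. One does not first build $C$ as an abstract double cover branched at the four points of $\BP U\cap TS$ and then search for $\kappa$; instead one uses Theorem \ref{proj_geom}(3): after checking that a nondegenerate $\BP U$ misses $Sing(TS)$, every point of $\BP U\setminus TS$ lies on a unique secant line and every point of $\BP U\cap TS$ on a unique tangent line, so there is a morphism $\psi_{U}\colon\BP U\to Hilb_{2}(S)$, and $C$ is defined as the pullback of the universal length-two family, with $\kappa$ the evaluation map to $S$ and $L=\pi_{U}^{*}\CO_{\BP U}(1)$. The morphism to $S$ is then tautological, and the identity $\kappa^{*}\CO_{S}(1)\cong L^{\otimes 3}$, the smoothness and genus of $C$, and the identification of the branch locus with $\BP U\cap TS$ are consequences of this incidence description (compare the degree computation in Lemma 3.4). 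Your claims that the two assignments are mutually inverse and that $\rho\circ\kappa^{*}$ is nondegenerate also secretly rely on identifying your abstract branched cover with this incidence curve, so they inherit the same gap. To repair the proposal you would either have to carry out the secant/tangent-line construction as above, or genuinely produce the cubic-Jordan-algebra formulas for $\kappa$ (uniformly or case by case) --- at present neither is done.
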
 
The correspondence is built upon  some special projective properties of subadjoint varieties (see Section 2.3). The case where $G$ is of type $D_{4}$ can be written more concretely as follows. In particular it provides a geometric interpretation for the generators of the invariant ring in this case (see Proposition A.2). 
	\begin{prop}\cite[Section 2.4, Theorem 6.4]{BH} \label{D_{4}'}
		In Theorem \ref{cr}, if $G=D_{4}$, then:\par 
		(1) A plane $U$ is nondegenerate if and only if the linear section $S_{U}$ is nonsingular.\par 
		(2) The triple $(C,L,\kappa: C \rightarrow \mathbb{P}^{1} \times \mathbb{P}^{1} \times \mathbb{P}^{1})$ is the same as the datum $(C,L,L_{i}, i=1,2,3)$, where $(L_{1},L_{2},L_{3})$ is a tuple of three  line bundles of degree 2 on $C$, such that $3L=L_{1}+L_{2} +L_{3}$, and $ L \not= L_{i}$, for any $i=1,2,3$. \par 
		(3) There exists a choice of normalization for the  generators of $SL_{2} \times H$-invariants $a_{2},a_{4}.a_{4}',a_{6}$, such that given a nondegenerate tensor in $\BC^{2} \otimes W$ corresponding to the datum $(C,L,L_{i}, i=1,2,3)$ as in (2), the Jacobian of $C$ may be given in normal form as 
		\begin{equation}
			y^{2}=x^{3}+a_{8}x+a_{12},
			\end{equation}
		on which the points $P_{1}\doteq L-L_{1}=(a_{4},a_{6}),P_{2} \doteq L-L_{2}=(a_{4}',a_{6}'),P_{3} \doteq L-L_{3}=(a_{4}'',a_{6}'')$ lie, such that $a_{2}$ is the slope of the line connecting $P_{1}$ and $P_{2}$, and $a_{6}'=a_{6}+a_{2}(a_{4}'-a_{4})$.
	\end{prop}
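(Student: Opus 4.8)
The three parts call for rather different arguments, and I would treat them separately.

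\emph{Part (1).} The plan is to use the Legendrian structure of $S$ together with projective biduality. Since $S$ is homogeneous, every $x\in S$ is a smooth point and the affine tangent space $\hat T_x\hat S\subseteq W$ is a Lagrangian subspace for $w$, so $(\hat T_x\hat S)^{\perp_w}=\hat T_x\hat S$. A transversality count then shows that $S_U=S\cap\BP U^{\perp}$ is singular at a point $x$ precisely when $x\in\BP U^{\perp}$ and $\hat T_x\hat S\cap U\neq 0$: non-transversality at $x$ means $\hat T_x\hat S+U^{\perp}\neq W$, and taking $w$-orthogonals converts this into $\hat T_x\hat S\cap U\neq 0$, using that $\hat T_x\hat S$ is Lagrangian and $(U^{\perp})^{\perp_w}=U$. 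Next, again from $\hat T_x\hat S$ being Lagrangian, one checks that $TS$ coincides, under the isomorphism $W\cong W^{\vee}$ defined by $w$, with the projective dual variety $S^{\vee}\subseteq\BP W^{\vee}$; and the biduality (reflexivity) theorem applied to $S$ then gives, for a general point $[v]\in T_xS\subseteq TS$, that the embedded tangent space $T_{[v]}TS$ equals the hyperplane $\BP(\hat x^{\perp_w})$. Hence, if $\BP U$ meets the quartic $TS$ in $[v_1],\dots,[v_4]$ with $[v_i]\in T_{x_i}S$, then $[v_i]$ is a multiple point of $\BP U\cap TS$ if and only if $\BP U\subseteq T_{[v_i]}TS=\BP(\hat x_i^{\perp_w})$, i.e.\ if and only if $\hat x_i\in U^{\perp}$, i.e.\ if and only if $x_i\in\BP U^{\perp}$. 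Comparing the two criteria: $U$ is nondegenerate if and only if no $x_i$ lies in $\BP U^{\perp}$, if and only if $S_U$ is everywhere smooth. The remaining configurations, where $\BP U$ meets the singular locus of $TS$ (in particular where $\BP U\subseteq TS$), are checked to force $S_U$ singular directly.

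\emph{Part (2).} Here the plan is simply to unwind Theorem~\ref{cr} when $S=\BP^1\times\BP^1\times\BP^1$. A morphism $\kappa\colon C\to S$ is a triple $(\kappa_1,\kappa_2,\kappa_3)$ with $\kappa_i\colon C\to\BP^1$, and each factor map of a codimension-two linear section of $(\BP^1)^3$ has degree $2$, so $L_i:=\kappa_i^{*}\CO_{\BP^1}(1)$ has degree $2$; since $h^0(L_i)=2$ on a genus-one curve, $\kappa_i$ is the map attached to the complete linear system $|L_i|$, well defined up to the $\PGL_2$ on the target that is built into the equivalence of Theorem~\ref{cr}. Thus a triple $(C,L,\kappa)$ is the same as a datum $(C,L,L_1,L_2,L_3)$, and the condition $\kappa^{*}\CO_S(1)\cong L^{\otimes 3}$, i.e.\ $\kappa^{*}\CO(1,1,1)\cong L^{\otimes 3}$, becomes $L_1+L_2+L_3=3L$ in $\Pic(C)$. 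It remains to match the nondegeneracy of the associated plane $U$ — equivalently, by (1), the smoothness of $S_U$ — with the open condition $L\neq L_i$ for $i=1,2,3$; for this one uses the explicit form of the curve in \cite{BH}, in which $L$ is the pullback of $\CO(1)$ along the degree-two cover $C\to\BP U$ branched over $TS\cap\BP U$, and checks that $L=L_i$ is exactly the locus along which the $i$-th factor map of $S_U\subseteq S$ degenerates, which corresponds to $U$ being degenerate.

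\emph{Part (3).} The plan is an explicit computation with the hypercube $v\in\BC^2\otimes W\cong\BC^2\otimes\BC^2\otimes\BC^2\otimes\BC^2$, following \cite{BH} together with Vinberg's invariant theory. One first records explicit generators $a_2,a_4,a_4',a_6$ of $\BC[\BC^2\otimes W]^{SL_2\times H}$ of degrees $2,4,4,6$ and the Weierstrass model of $\Jac(C)$ attached to $v$, brought to the form $y^{2}=x^{3}+a_{8}x+a_{12}$ with $a_8,a_{12}$ explicit polynomials of degrees $8,12$ in $a_2,a_4,a_4',a_6$. The points $P_i=L-L_i\in\Jac(C)$ satisfy $P_1+P_2+P_3=3L-(L_1+L_2+L_3)=0$ by (2), hence are collinear on the cubic once its origin is taken to be the flex at infinity; writing the line through them as $y=a_2x+b$, collinearity of $P_1=(a_4,a_6)$, $P_2=(a_4',a_6')$, $P_3=(a_4'',a_6'')$ yields at once that $a_2$ is the slope through $P_1,P_2$ together with the relation $a_6'=a_6+a_2(a_4'-a_4)$ (and $a_4''=a_2^{2}-a_4-a_4'$, while $a_8,a_{12}$ are then forced by demanding that $P_1,P_2$ lie on the curve). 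The genuinely nontrivial point — which I expect to be the main obstacle — is to exhibit a rescaling of the generators $a_2,a_4,a_4',a_6$ under which the coordinates of $P_1,P_2,P_3$ become \emph{literally} $(a_4,a_6),(a_4',a_6'),(a_4'',a_6'')$; this amounts to reconciling the normalization of Vinberg's $\theta$-group invariants with the one dictated by the curve-theoretic construction of $(C,L,L_i)$, and is where essentially all of the bookkeeping resides. Parts (1) and (2), by contrast, I expect to be fairly formal once the projective geometry of $S$ in Theorem~\ref{proj_geom} is in hand.
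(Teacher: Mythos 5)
Your statement is one the paper does not prove at all: it is quoted from \cite[Section 2.4, Theorem 6.4]{BH}, so there is no internal proof to compare against. The closest in-paper material is the proof of Theorem \ref{3_lines}(1) (which subsumes your part (1), for all types including $D_4$), the remark opening Section 2.3, and Appendix A. Measured against that, your part (1) is essentially the paper's argument in different clothing: the paper characterizes $Sing(S_U)$ via Theorem \ref{proj_geom}(2) and then works with the $\flat$-map and $\det$, while you use the Lagrangian condition plus reflexivity ($TS=S^{*}$) to identify tangent hyperplanes of $TS$; the two give the same criterion at smooth points of $TS$. The one place you wave your hands — ``the remaining configurations, where $\BP U$ meets the singular locus of $TS$, are checked to force $S_U$ singular directly'' — is exactly where the paper does real work: for $x\in Sing(TS)\cap \BP U$ it uses Theorem \ref{proj_geom}(3.3) to see that $S_{x}=\{p\in S:\ x\in \BP\widehat{T_{p}S}\}$ is positive-dimensional, notes $S_x\subseteq H_x$, and intersects with a second hyperplane $H_y$, $y\in\BP U$, to produce a singular point of $S_U$. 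You should supply an argument of this kind; it is not automatic.

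For (2) and (3) there are genuine gaps. In (2), the claims that each $L_i=\kappa_i^{*}\CO_{\BP^1}(1)$ has degree exactly $2$ (a priori you only know $\deg L_1+\deg L_2+\deg L_3=6$ from $\kappa^{*}\CO_S(1)\cong L^{\otimes 3}$) and that the condition $L\neq L_i$ is equivalent to nondegeneracy of $U$ are precisely the content of the dictionary in \cite{BH}; you assert them (``one checks\dots''), so your write-up defers the substance of (2) back to the very reference being proved. In (3), your skeleton is right and consistent with the paper's Appendix: $P_1+P_2+P_3=(L-L_1)+(L-L_2)+(L-L_3)=0$ in $\Jac(C)$ gives collinearity, the slope relation $a_6'=a_6+a_2(a_4'-a_4)$, the Vieta relation for the third coordinate (with the coefficient $1$ on the square of the slope, matching the correction to \cite{BH} recorded before Lemma A.4), and $a_8,a_{12}$ determined by requiring the points to lie on the cubic, as in (A.3). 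But the actual assertion of (3) is the \emph{existence of a normalization} of the degree $2,4,4,6$ generators of $\BC[\BC^{2}\otimes W]^{SL_{2}\times H}$ under which the coordinates of $P_1,P_2,P_3$ are literally those invariants; you explicitly set this aside as ``bookkeeping,'' yet it is the entire point — one must exhibit the invariants on $\BC^2\otimes\BC^2\otimes\BC^2\otimes\BC^2$ (e.g.\ via the binary quartic attached to a slicing and the slope/coordinate covariants) and verify the identification, as Bhargava--Ho do in their Section 2.4. As it stands, the proposal proves (1) modulo the $Sing(TS)$ case and outlines, but does not prove, (2) and (3).
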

The key ingredient of our main result is the following, which relates nondegenerate planes in $W_{G}$ to  nondegenerate planes in $W_{D_{4}}$.
\begin{thm}\label{3_lines}
	Let $G=D_{4}, F_{4},E_{6},E_{7}$ or $E_{8}$, then:\par 
	(1). A plane $U \subseteq W_{G}$ is nondegenerate if and only if $S_{U}$ is nonsingular. There is a $H$-invariant section $\sigma_{12} \in H^{0}(Gr(2,W),\mathcal{L}^{\otimes 12})^{H}$, such that $[U] \in Gr(2,W)$ is nondegenerate if and only if $\sigma_{12}(U) \not =0$.\par 
	(2).(Proposition \ref{3_line}) Let  $U \subseteq W_{G}$ be a nondegenerate plane, and let $(C,L,\kappa:C \rightarrow S_{G})$ be the associated datum defined in Theorem \ref{cr}. Then there exists (uniquely) three line bundles $L_{1},L_{2},L_{3}$ on $C$ of degree two satsifying:\par 
(2.1) $3L=L_{1}+L_{2}+L_{3}$, and $L \not= L_{i}$ for $i=1,2,3$;\par 
(2.2) For any two distinct points $p,q$ on $C$, $p+q \in |L_{i}|$ for some $i$ if and only if their images $\kappa(p),\kappa(q)$ are isotropic under $w$, or equivalently $\kappa(p)$ and $\kappa(q)$ can be connected by at most two lines on $S_{G}$.
\end{thm}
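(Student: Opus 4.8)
\medskip
\noindent\textbf{A proof proposal.}

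\emph{Part (1).} The section $\sigma_{12}$ will be produced from the quartic $TS$ of Theorem~\ref{proj_geom}: restricting its defining form to the pencil $\mathbb{P}U$ gives a binary quartic, canonically defined up to scalar and up to the $\mathrm{GL}_2$–action on $U$, and its discriminant — a degree-$6$ invariant of binary quartics — therefore glues to an $H$-invariant section of $\mathcal{L}^{\otimes 12}$ on $Gr(2,W)$, nonzero precisely when $\mathbb{P}U\cap TS$ consists of four distinct points, i.e.\ when $U$ is nondegenerate. To match this with smoothness of $S_U$ I will use that $\hat S$ is a Lagrangian cone, so at a smooth point $[v]$ the embedded tangent space $\hat T_{[v]}S$ equals its own $w$-annihilator; a short linear-algebra argument then shows $S_U=S\cap\mathbb{P}U^{\perp}$ is singular at $[v]$ iff $v\in U^{\perp}$ and $\hat T_{[v]}S\cap U\neq 0$. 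One combines this with the fact $(TS)^{\vee}=S$ for Legendrian $S$: the polar map of the quartic $TS$ carries a general point of $\mathbb{T}_{[v]}S$ to $[v]$, equivalently $\hat T_{[u]}(TS)=v^{\perp}$ for general $u\in\hat T_{[v]}S$, which one checks by differentiating the identity $w(v,\,\cdot\,)\equiv 0$ on $\hat T_{[v]}S$ along $S$. Feeding this back in, a singular point of $S_U$ forces $\mathbb{P}U$ to be tangent to $TS$ (or to meet $\operatorname{Sing}TS$), and conversely; using the description of $TS$ and $\operatorname{Sing}TS$ from Theorem~\ref{proj_geom} and \cite[\S2.3]{BH} this is exactly the failure of $\mathbb{P}U\cap TS$ to be four reduced points, i.e.\ degeneracy of $U$. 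For $G=D_4$ this is Proposition~\ref{D_{4}'}(1).

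\emph{Part (2), set-up.} Let $U\subseteq W_G$ be nondegenerate, with datum $(C,L,\kappa\colon C\to S_G)$. First I rephrase (2.2): for $a,b\in S_G$ the condition $w(a,b)=0$ says the plane $\langle a,b\rangle$ is $w$-isotropic, which for subadjoint varieties is equivalent to the secant $\overline{ab}$ lying on $S_G$ or on $TS_G$, i.e.\ to $a,b$ being joined by at most two lines of $S_G$ (Theorem~\ref{proj_geom}, \cite[\S2.3]{BH}). Hence the locus in (2.2) is, away from the diagonal, the zero scheme of
\[
 s \;=\; w\bigl(\hat\kappa(\,\cdot\,),\hat\kappa(\,\cdot\,)\bigr)\ \in\ H^{0}\!\bigl(C\times C,\ L^{\otimes3}\boxtimes L^{\otimes3}\bigr),
\]
which is alternating and vanishes on the diagonal $\Delta$; since $\hat\kappa(C)$ lies on the Lagrangian cone $\hat S_G$, $s$ vanishes along $\Delta$ to order exactly $3$ (a uniform feature of these varieties — for $D_4$ it is the identity $w=\omega^{\otimes3}$ on the cone of rank-one tensors). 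Thus $\operatorname{div}(s)=Z_0+3\Delta$ with $Z_0$ effective, symmetric, in $|L^{\otimes3}\boxtimes L^{\otimes3}(-3\Delta)|$. Writing $F_1,F_2$ for the two fibre classes on $C\times C$ and $\gamma=F_1+F_2-\Delta$, a N\'eron–Severi computation (using $\Delta^2=0$, $F_1F_2=1$, $\Delta F_i=1$) gives $[Z_0]=3(F_1+F_2+\gamma)$, which is exactly three times the class of any curve $C_M:=\{(p,q)\colon \mathcal{O}_C(p+q)\cong M\}$ ($M\in\operatorname{Pic}^2 C$); moreover $C_M$ is the unique effective divisor in its class, and the $C_M$ are the only irreducible curves of that class.

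\emph{Part (2), conclusion.} It remains to show $Z_0=C_{L_1}+C_{L_2}+C_{L_3}$ for suitable $L_i\in\operatorname{Pic}^2 C$ and to identify them. I reduce to $G=D_4$ via Corollary~\ref{S3}: the inclusion $W_{D_4}\subseteq W_G$ comes from a sub-Jordan-algebra $\mathbb{C}^3\subseteq J_G$, giving $S_{D_4}\subseteq S_G$ with $\mathcal{O}_{S_G}(1)|_{S_{D_4}}=\mathcal{O}_{S_{D_4}}(1)$, while $w_G|_{W_{D_4}}$ is a nonzero multiple of $w_{D_4}$ (both are the unique invariant alternating form on the irreducible $H_{D_4}$-module $\mathbb{C}^2\otimes\mathbb{C}^2\otimes\mathbb{C}^2$). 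Hence for $U\subseteq W_{D_4}$ the $G$-datum is the $D_4$-datum followed by $S_{D_4}\hookrightarrow S_G$, and Proposition~\ref{D_{4}'} gives $Z_0=C_{L_1}+C_{L_2}+C_{L_3}$ with $L_i=\kappa_i^{*}\mathcal{O}(1)$ the degree-$2$ bundles of the three $\mathbb{P}^1$-factors, satisfying $3L=L_1+L_2+L_3$ and $L\neq L_i$. Because the comparison of GIT quotients in Theorem~\ref{mot_} forces $H_G\cdot Gr(2,W_{D_4})$ to be dense in $Gr(2,W_G)$, and $(C,L,\kappa)$ — hence $Z_0$ and the $L_i$ — vary in an algebraic family over the irreducible nondegenerate locus while $\operatorname{Pic}^2 C$ is proper, the conclusion ($Z_0=\sum_i C_{L_i}$, $3L=\sum_i L_i$, $L\neq L_i$) propagates to every nondegenerate $U\subseteq W_G$; uniqueness is clear since $M\mapsto C_M$ is injective, and (2.2) is then read off from $\operatorname{div}(s)=\sum_i C_{L_i}+3\Delta$.

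\emph{Expected obstacle.} The two pressure points are: in Part~(1), controlling $\operatorname{Sing}TS$ finely enough that ``$U$ nondegenerate'' and ``$S_U$ smooth'' coincide on the nose (rather than up to a divisor), which rests on the projective geometry of subadjoint varieties and must be argued uniformly in the type; and in Part~(2), the order-$3$ vanishing of $s$ along $\Delta$ together with the transfer from $D_4$, which needs both the density of $H_G\cdot Gr(2,W_{D_4})$ and the compatibility of the Bhargava–Ho construction with $W_{D_4}\subseteq W_G$. Should the density statement be unavailable in the required form, the fallback is to prove $Z_0=\sum_i C_{L_i}$ intrinsically, by showing the correspondence $Z_0\subseteq C\times C$ is stable under the involutions of the three pencils $|L_i|$ and hence decomposes into anti-translation graphs; this is the step I expect to be the most delicate.
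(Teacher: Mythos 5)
Your Part (1) is essentially the paper's own argument: the section $\sigma_{12}$ is the discriminant of the binary quartic obtained by restricting $\det$ to the pencil (an $SL_{2}\times H$-invariant of degree $24$, i.e.\ a section of $\CO_{Gr}(12)$), and the equivalence with smoothness of $S_{U}$ is run through the projective geometry of $TS$, $\operatorname{Sing}(TS)$ and the $\flat$/polar map exactly as in Theorem \ref{proj_geom}; your sketch of the tangency criterion ($S_{U}$ singular at $[v]$ iff $U\cap\widehat{T_{[v]}S}\neq 0$, using that $\widehat{T_{[v]}S}$ is Lagrangian) is a correct reformulation of what the paper does, though the two directions (tangency at a smooth point of $TS$ forces $[A^{\flat}]\in S_{U}$; a point of $\BP U\cap\operatorname{Sing}(TS)$ forces $\dim S_{x}>0$ and hence a singular point) are only asserted, not carried out.

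For Part (2) your route is genuinely different from the paper's, and it has a gap at its load-bearing step. The paper proves Proposition \ref{3_line} directly and uniformly in $G$: Lemma 3.4 uses the local parametrization $v\mapsto[1:f(v):f(v)^{\#}:N(f(v))]$ to show that $(\kappa\times\kappa)^{*}(w)$ restricted to each fibre is nonzero with a zero of order exactly $3$ at the diagonal point (this is where the cubic norm $N$ enters, and where nondegeneracy of $U$ plus Theorem \ref{lines_S}(iii) is used), and then Lemmas 3.5--3.8 pin down the non-diagonal part $E$ by intersection numbers ($E^{2}=0$, $\Delta_{C}\cdot E=12$), Euler characteristics ($\chi(\CO_{E})=0$, $\chi(\CO_{F})=3$) and an $S_{2}$-invariance argument, forcing $E=E_{1}+E_{2}+E_{3}$ with each $F_{k}\cong\BP^{1}$, i.e.\ $F=\rho_{3}(L_{1}+L_{2}+L_{3})$. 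You instead assert the order-$3$ vanishing along $\Delta$ as ``a uniform feature'' (unproved outside $D_{4}$), note that $Z_{0}$ is numerically equivalent to $3C_{M}$, and then try to deduce the actual decomposition $Z_{0}=C_{L_{1}}+C_{L_{2}}+C_{L_{3}}$ by reducing to $D_{4}$ and propagating by density. The numerical class alone cannot give the decomposition (an effective divisor in the class $6F_{1}+6F_{2}-3\Delta$ need not a priori be a sum of three curves of class $2F_{1}+2F_{2}-\Delta$; excluding fibre components, non-reduced structure, etc.\ is exactly the content of the paper's Lemmas 3.4--3.8), so everything hinges on your density claim: that $H_{G}\cdot Gr(2,W_{D_{4}})$ is dense in $Gr(2,W_{G})$. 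This does not follow from Theorem \ref{mot_}: the fact that both invariant rings are polynomial with generators of degrees $2,6,8,12$ (after adding $S_{3}$ on the $D_{4}$ side) says nothing about the $H_{G}$-saturation of the sub-Grassmannian being dense; the paper never states or uses such a result, and it is itself a nontrivial Vinberg-theoretic statement (essentially that a Cartan subspace of the $\theta$-representation $\BC^{2}\otimes W_{G}$ can be chosen inside $\BC^{2}\otimes W_{D_{4}}$ and that generic orbits meet it) that would need its own proof. Without it your transfer argument, and with it (2.1), (2.2) and the uniqueness for general $G$, is unsupported; even granting density you would still need the nonvanishing of $s_{U}$ and the constancy of the diagonal multiplicity along the family to make the properness/closedness limit argument work (the paper supplies these via Lemma 3.4 and, in the relative setting, Corollaries 3.11--3.12).
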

 Denote the open subset parametrizing nonsingular sections by $\CU_{G}=\{\sigma_{12} \not=0\} \subseteq Gr(2,W_{G})$. As a nondegenerate plane $U$ varies in $\CU_{G}$, we show that Theorem \ref{3_lines}(2) induces a family of three line bundles (Proposition \ref{family}). This motivates us to define
 the moduli functor $\CM$ (Definition \ref{mod_func}) which parametrizes the datum $(C,L,L_{1},L_{2},L_{3})$, where $C$ is a curve of genus one, $L$ is a degree two line bundle on $C$, and $\{L_{1},L_{2},L_{3}\}$ is a multiset of three  degree two line bundles on $C$ satisfying the condition as in Theorem 1.7(2.1). \par 
 
 It follows from Theorem 1.7(1) that $\CU_{G} \subseteq Gr(2,W_{G})^{ss}$. We denote $M_{G}= \CU_{G} // H_{G} $ to be its GIT quotient. In the case where $G=D_{4}$, we denote $M=M_{D_{4}}//S_{3}$ to be the GIT quotient by $S_{3}$. 
We deduce the following from Proposition 1.6:
\begin{prop}\label{D_{4}}
	(1). For the action of $H_{D_{4}}=SL_{2} \times SL_{2} \times SL_{2}$ on $(Gr(2,W_{D_{4}}),\CO_{Gr}(1))$, a plane $[U]$ is GIT-stable if and only if $[U]$ is nondegenerate. \par   
	(2). There is a natural transformation $\xi: \CM \rightarrow h_{M}$, making it a coarse moduli space of $\CM$. \par 
    (3). The parametrization in Prop 1.6 (3) induces a natural open embedding of $M$ into $\mathbb{P}(1,3,4,6)$ whose complement is a hypersurface of degree 12.
\end{prop}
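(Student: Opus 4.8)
\emph{Strategy.} Parts (1)--(3) are obtained by feeding Theorem~\ref{3_lines}, Theorem~\ref{cr} and Proposition~\ref{D_{4}'} (the $D_{4}$ case, from \cite{BH}) into standard GIT, together with the invariant-ring computation recorded in Remark~1.2 and carried out in the Appendix. I use the identification $\BC^{2}\otimes W_{D_{4}}\cong(\BC^{2})^{\otimes4}$, under which $H_{D_{4}}$-orbits on $Gr(2,W_{D_{4}})$ correspond to $GL_{2}\times SL_{2}^{3}$-orbits on $2\times2\times2\times2$ hypercubes, the setting of \cite{BH}; a nondegenerate plane $[U]$ carries the smooth genus-one curve $C=S_{U}$, embedded in $\BP U^{\perp}\cong\BP^{5}$ by the complete linear system $|L^{\otimes3}|$ and hence spanning it. For part (1), by Theorem~\ref{3_lines}(1) the set $\CU_{D_{4}}=\{\sigma_{12}\neq0\}$ is the nondegenerate locus, and since $\sigma_{12}$ is an $H_{D_{4}}$-invariant section of $\CL^{\otimes12}$ it lies in $Gr(2,W_{D_{4}})^{\ss}$ and is the preimage of $\BP(1,2,2,3)\setminus V(\sigma_{12})$ under $\pi\colon Gr^{\ss}\to Gr^{\ss}/\!/H_{D_{4}}=\BP(1,2,2,3)$. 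For $[U]\in\CU_{D_{4}}$ the stabilizer $\Stab_{H_{D_{4}}}([U])$ is finite: its image in $\PGL(W_{D_{4}})$ acts faithfully on the spanning curve $S_{U}\cong C$ — any element acting trivially on $S_{U}$ acts trivially on $\BP U^{\perp}$, hence, since a non-scalar element of $SL_{2}^{3}$ has every eigenspace on $W_{D_{4}}$ of dimension at most $4<\dim U^{\perp}=6$, is scalar on $W_{D_{4}}$ — so it injects into the finite automorphism group of the polarized genus-one curve $(C,L^{\otimes3})$, while $\ker(SL_{2}^{3}\to\PGL(W_{D_{4}}))$ is finite. Hence $H_{D_{4}}\cdot[U]$ is closed in $Gr^{\ss}$: otherwise its closure would contain a closed orbit of strictly smaller dimension, thus with positive-dimensional stabilizer, whose $\pi$-image is $\pi([U])\notin V(\sigma_{12})$, so that orbit would again lie in $\CU_{D_{4}}$ and have finite stabilizer, a contradiction. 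Thus $\CU_{D_{4}}\subseteq Gr(2,W_{D_{4}})^{\mathrm{s}}$; conversely a degenerate semistable plane corresponds, via the boundary extension of Theorem~\ref{cr}, to a configuration on a nodal or cuspidal genus-one curve, whose automorphism group contains a $\Gm$ realized inside the stabilizer, so it is not stable. This gives $Gr(2,W_{D_{4}})^{\mathrm{s}}=\CU_{D_{4}}$ (cf.\ \cite[Thm.~6.4]{BH}), and in particular $M_{D_{4}}=\CU_{D_{4}}/\!/H_{D_{4}}$ is a geometric quotient.

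\emph{Proof of (2).} A coarse moduli space amounts to a bijection on $\BC$-points that is initial. On $\BC$-points: Theorem~\ref{cr} identifies nondegenerate $GL_{2}\times H_{D_{4}}$-orbits with triples $(C,L,\kappa\colon C\to S_{D_{4}})$, Proposition~\ref{D_{4}'}(2) rewrites these as ordered data $(C,L,L_{1},L_{2},L_{3})$ satisfying the conditions of Theorem~\ref{3_lines}(2.1), and the quotient by the $S_{3}$ permuting the three $\BP^{1}$-factors of $S_{D_{4}}$ yields exactly $\CM(\Spec\BC)$; since the $GL_{2}$-scaling acts trivially on $Gr$, part (1) identifies the same orbits with $\BC$-points of $M_{D_{4}}$, and their $S_{3}$-orbits with $\BC$-points of $M=M_{D_{4}}/\!/S_{3}$, so composing gives $\CM(\Spec\BC)\xrightarrow{\sim}M(\BC)$. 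For the transformation $\xi$: given $(\CC\to T,\CL,\{\CL_{i}\})\in\CM(T)$, each $\CL_{i}$ is relatively base-point-free with $R^{1}\pi_{*}\CL_{i}=0$, so the $\CL_{i}$ reconstruct a $T$-family of triples $(C_{t},L_{t},\kappa_{t})$ as in Theorem~\ref{cr}; applying the (geometric, hence base-changeable) construction of \cite{BH} in families yields a morphism $T\to[\CU_{D_{4}}/H_{D_{4}}]$, canonical up to the $S_{3}$-reordering of the $L_{i}$, which we compose with $[\CU_{D_{4}}/H_{D_{4}}]\to M_{D_{4}}\to M$ to get $\xi_{T}\colon\CM(T)\to h_{M}(T)$, functorial in $T$. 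Initiality: the universal hypercube over $\CU_{D_{4}}$ gives, via Theorem~\ref{cr} in families and after forgetting the ordering, a tautological $(H_{D_{4}}\rtimes S_{3})$-invariant object of $\CM(\CU_{D_{4}})$; any $\eta\colon\CM\to h_{N}$ sends it to an $(H_{D_{4}}\rtimes S_{3})$-invariant morphism $\CU_{D_{4}}\to N$, which factors uniquely through the categorical quotient $M=\CU_{D_{4}}/\!/(H_{D_{4}}\rtimes S_{3})$, and the resulting $M\to N$ is the unique morphism compatible with $\xi$.

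\emph{Proof of (3).} By Remark~1.2 the ring $\BC[\BC^{2}\otimes W_{D_{4}}]^{SL_{2}\times H_{D_{4}}\times S_{3}}$ is polynomial on generators of degrees $2,6,8,12$, whence
\[
Gr(2,W_{D_{4}})^{\ss}/\!/(H_{D_{4}}\rtimes S_{3})=\Proj\,\BC[\BC^{2}\otimes W_{D_{4}}]^{SL_{2}\times H_{D_{4}}\times S_{3}}\cong\BP(1,3,4,6).
\]
The section $\sigma_{12}$ of Theorem~\ref{3_lines}(1) is $H_{D_{4}}$-invariant, and also $S_{3}$-invariant because the $S_{3}$ of Proposition~1.5 permutes the factors of the Segre $S_{D_{4}}=\BP^{1}\times\BP^{1}\times\BP^{1}$, hence preserves $S_{D_{4}}$, its tangent variety, and the nondegeneracy locus; so $\sigma_{12}$ lies in this invariant ring in degree $12$ (in the grading where the generators have degrees $1,3,4,6$). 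As $\CU_{D_{4}}=\{\sigma_{12}\neq0\}$ is precisely the preimage of the basic open $D_{+}(\sigma_{12})\subseteq\BP(1,3,4,6)$, taking GIT quotients of this saturated open set gives
\[
M=\CU_{D_{4}}/\!/(H_{D_{4}}\rtimes S_{3})\cong D_{+}(\sigma_{12})\subseteq\BP(1,3,4,6),
\]
an open subvariety whose complement is the degree-$12$ hypersurface $\{\sigma_{12}=0\}$. To pin down the embedding explicitly — and to recognize the complement as the ``fixed'' hypersurface of the introduction — I would use Proposition~\ref{D_{4}'}(3): it sends a nondegenerate $[U]$ with associated collinear points $P_{1},P_{2},P_{3}$ on the Weierstrass cubic $y^{2}=x^{3}+a_{8}x+a_{12}$ to $[a_{2}:b:a_{8}:a_{12}]\in\BP(1,3,4,6)$, where $b$ is the (well-defined, hence $S_{3}$-invariant, since the $P_{i}$ are collinear) intercept of the common line $y=a_{2}x+b$ through them, written as a polynomial in the $SL_{2}\times H_{D_{4}}$-invariants; since $\sigma_{12}$ vanishes exactly when this cubic is singular, it agrees up to a nonzero scalar with the discriminant $4a_{8}^{3}+27a_{12}^{2}$, an irreducible polynomial of degree $12$.

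\emph{Main obstacle.} The one step genuinely using the fine geometry of \cite{BH} rather than formal GIT is the inclusion $Gr(2,W_{D_{4}})^{\mathrm{s}}\subseteq\CU_{D_{4}}$ in (1), i.e.\ that no degenerate semistable plane is stable; this rests on understanding the nodal and cuspidal degenerations of the associated genus-one curve and the $\Gm$ in their automorphism groups, and I would either quote \cite[Thm.~6.4]{BH} or analyse that boundary of the correspondence of Theorem~\ref{cr} directly. A secondary technical point is the base-change compatibility of the correspondence of Theorem~\ref{cr} (needed to construct $\xi$ and the tautological family in (2)), which I expect to follow from inspecting the construction of \cite{BH} rather than from any new idea.
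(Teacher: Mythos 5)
The one step that genuinely fails is the converse direction of (1), i.e.\ that every degenerate plane is non-stable. You propose that a degenerate semistable plane corresponds, via a ``boundary extension'' of Theorem \ref{cr}, to a configuration on a nodal or cuspidal genus-one curve and that the resulting $\Gm$ of automorphisms is ``realized inside the stabilizer''. Neither assertion is available (Theorem \ref{cr} is stated only for nondegenerate orbits, and no boundary extension is established here), and the conclusion is in fact false for generic degenerate planes: the locus $\{\sigma_{12}=0\}$ is a hypersurface in the $12$-dimensional $Gr(2,W_{D_{4}})$ whose image in the $3$-dimensional quotient is $2$-dimensional, so a generic degenerate semistable orbit has dimension $9=\dim H_{D_{4}}$ and hence \emph{finite} stabilizer; such a plane fails stability because its orbit is not closed in $Gr(2,W_{D_{4}})^{ss}$, not because of a $\Gm$ in its stabilizer. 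The paper handles this by a short, explicit Hilbert--Mumford computation: for degenerate $[U]$ the proof of Theorem \ref{3_lines}(1) produces $z\in S$ with $w(\alpha,-)|_{U}\equiv 0$; normalizing $z=Q_{J}([1:0])$ one may write $A=(a,b,0,0)$, $B=(a',b',c',0)$ in Zorn coordinates, and the one-parameter subgroup (2.5) has only nonnegative weights on $A\wedge B$, so $\lim_{t\to 0}t\cdot(A\wedge B)$ exists and $[U]$ is not stable. Your ``main obstacle'' remark (quote \cite{BH} Thm.~6.4 or analyse the boundary) does not close this gap, because what must be produced is non-closedness of orbits, not positive-dimensional stabilizers.

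The remaining parts are broadly along the paper's lines, with caveats. For the forward direction of (1) your finite-stabilizer argument (faithful action of $\mathrm{Stab}_{H_{D_{4}}}([U])$ on the sextic curve $S_{U}$, plus the eigenspace bound) is a valid alternative to the paper's route, which instead injects the stabilizer into $Aut(C_{U},L_{U},L_{1},L_{2},L_{3})$ via Proposition \ref{D_{4}'} and identifies the subgroup fixing $\kappa_{U}$ as $\{\pm1\}^{3}$; you do use without proof that $S_{U}$ spans $\BP U^{\perp}$ (true, but it needs a word), and your saturation/closed-orbit argument is exactly the paper's. In (2) your strategy matches the paper, but the real content there --- descending the three line bundles from the $S_{3}$-cover $\widetilde{T}$, trivializing pushforwards on a Zariski cover, constructing the linear embedding from $(\pi_{\overline{T}})_{*}\CO_{\CC_{\overline{T}}}=\CO\oplus\CA_{-2}$, and descending through the $PGL_{2}^{3}$-ambiguity --- is precisely what you defer to ``inspecting \cite{BH} in families''. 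In (3) you shortcut by quoting Remark 1.2; but in this paper that remark is only justified by the appendix computation (relative Weierstrass forms, the auxiliary functor $\CM'$ with coarse space $D_{+}(\Delta)\subseteq\BP(1,2,2,3)$, and the explicit $S_{3}$-invariant ring $\BC[d_{1},d_{3},d_{4},d_{6}]$), which \emph{is} the paper's proof of 1.8(3); so as written your (3) is nearly circular relative to the paper, and the compatibility with the parametrization of Proposition \ref{D_{4}'}(3), which is what the statement actually asserts, is only sketched (your coordinates $[a_{2}:b:a_{8}:a_{12}]$ do agree with the paper's $d$-generators up to a triangular change of variables, but that verification is part of the work).
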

	Combined with Proposition 1.6 and Proposition 1.8, our main theorem provides an answer to our Main Problem for nonsingular sections:
	\begin{main thm}\label{Main}
		For $G=F_{4},E_{6},E_{7}$ or $E_{8}$, Theorem \ref{3_lines} induces a natural datum $\alpha_{G} \in \CM(\CU_{G})$, which by Proposition \ref{D_{4}}(2) gives a  morphism $\xi(\CU_{G})(\alpha_{G}): \CU_{G} \rightarrow M$. This morphism is $H_{G}$-invariant, and the induced quotient morphism $\Phi_{G}:M_{G} \rightarrow M$ is an isomorphism.
	\end{main thm}
\begin{remark}
	$\Phi_{G}$ induces a birational map: $Gr(2,W_{G})^{ss}//H_{G} \dashrightarrow Gr(2,W_{D_{4}})^{ss}//H_{D_{4}} \times S_{3}$. We conjecture that it can be extended to an isomorphism, which would provide a complete answer to the Main Problem (2). 
\end{remark}
To study the moduli of codimension two linear sections of $S_{G}$ one step further, we ask the following question which has an affirmative answer in the case where $G=F_{4}$ (\cite[Proposition 4]{DM22}).
\begin{ques}
Let $G$ be of type $F_{4},E_{6},E_{7}$ or $E_{8}$. Let $[U],[U'] \in \CU_{G}$ be two general nondegenerate planes in $W_{G}$, and suppose that $\phi: S_{U} \rightarrow S_{U'}$ is an isomorphism. Dose there exist $h \in \overline{H}_{G}$ such that $[U']=h.[U]$ and $\phi=h^{*}$?
\end{ques}
 
The article is organized as follows. In Section 2 we first recall some basic facts on subadjoint varieties. Then we will prove Proposition 1.3, Theorem \ref{3_lines}(1) and  Proposition  \ref{D_{4}}(1). Moreover we shall construct a natural morphism $\sigma_{G}: M \rightarrow M_{G}$ which will be proved to be a section of $\Phi_{G}$ defined in the Main Theorem. In Section 3 we will prove Theroem 1.7(2) and show that it induces a natural datum $\alpha_{G} \in \CM(\CU_{G})$. In Section 4 we first verify Proposition 1.8(2), then we finish the proof of our Main Theorem. In the appendix we will prove Proposition 1.8(3).
	\begin{Notations}
		1.All schemes are assumed to be locally noetherian. A variety is defined as an integral separated scheme of finite type over $\mathbb{C}$.
		Given a line bundle $L$ on a variety $X$ and a nonzero section $\sigma \in H^{0}(X,L)$, we denote the associated effective Cartier divisor by $[\sigma]$. \par 
		2. Given a scheme $S$, a scheme $X$ over $S$ is said to be projective over $S$ if $X$ is a closed subscheme of $ \mathbb{P}_{S}(\CE)$ for some finite rank vector bundle $\CE$ on $S$.  For a projective and flat family $X$ over $S$, we denote $Div_{X/S}$ to be the scheme representing the functor of families of relative effective divisors on $X/S$.  
	\end{Notations}
\section{Preliminaries}
\subsection{Representation $(H,W)$}
To study the projective geometry of subadjoint varieties in details, we review two perspectives to recover the representation $(H,W)$. 
\subsubsection{Cubic Jordan algebras}
The first one is given from cubic Jordan algebras, from which we can express each subadjoint variety as a cubic twisted ``curve" over a cubic Jordan algebra (\ref{bir_map}).
\begin{defn}
	A $Jordan$ $algebra$ $J$ over $\mathbb{C}$ is a vector space over $\mathbb{C}$ with a bilinear product $\bullet$ satisfying the following axioms: \par 
	\begin{equation*}
		x \bullet y =y \bullet x;\,\, x^{2} \bullet (x \bullet y)=x \bullet (x^{2} \bullet y), \,\,\, \text{\,for $x,y \in J$}.
	\end{equation*}
\end{defn}
A Jordan algebra is called simple if it has no proper ideals, and it is called semi-simple if it is the direct sum of simple Jordan algebras.
A cubic Jordan algebra is a Jordan algebra in which every element satisfies a cubic equation. In the following, we always assume $J$ to be a semi-simple cubic Jordan algebra.\par 

The cubic equation naturally defines an admissible cubic form $N$ on $J$(see \cite[Definition 2]{Cl}). We denote its linearization to be $N: J \times J \times J \rightarrow \mathbb{C}$, with $N(x)=N(x,x,x)$. The cubic equation  also defines a  trace form $Tr(\, , \, )$ on $J$. It is nondegenerate when $J$ is semi-simple and thus one can define a quadratic adjoint map $\#$ on $J$ satisfying $Tr(x^{\#},y)=N(x,x,y)$ for all $y \in J$. The following table lists the Jordan algebras associated to Table 1 (see Theorem 2.4 below), where we remark that the last four columns exactly correspond to simple cubic Jordan algebras.\par 
\begin{table}[H]\label{T2}
	\centering
	\begin{tabular}{|c|c|c|c|c|c|}
		\hline
		$\mathfrak{g}$ &  $D_{4}$ & $F_{4}$ &	$E_{6}$ &       $E_{7}$ &	 $E_{8}$\\ \hline 
		$J$ &  $\CH_{3}(0)$ &$\CH_{3}(\mathbb{C})$	& $\CH_{3}(\mathbb{C}\oplus \mathbb{C})$ &
		$\CH_{3}(\mathbb{H}_{\BC})$  &  $\CH_{3}(\mathbb{O}_{\BC})$\\ \hline
	\end{tabular} 
	\caption{}
	\label{tab:my_label}
\end{table}
Here for a composition algebra $\BA$, $\mathcal{H}_{3}(\BA)$ is the $Hermitian$ cubic Jordan algebra over $\BA$ defined as follows:
\begin{equation*}
	\mathcal{H}_{3}(\BA)=\Bigg\{ \left(
	\begin{matrix}
		\xi_{1} & a & b \\
		\overline{a} & \xi_{2} & c \\
		\overline{b} & \overline{c} & \xi_{3} 
	\end{matrix}
	\right)
	, \xi_{1},\xi_{2},\xi_{3} \in \BC, a,b,c \in \BA.
	\Bigg\}
\end{equation*}
Recall that there are exatly four complex composition algebras: $\BA=\BC,\BC \oplus \BC, \BH_{\BC}, \BO_{\BC}$, which are the complexifications of $\BR,\BC,\BH,\BO$ respectively. $\CH_{3}(0) \subseteq \CH_{3}(\BA)$ is defined as the subalgebra of diagonal matrices. For the definition of $N$ and $\#$ on $\CH_{3}(\BA)$, see for example \cite[Example 05]{Kr}.\par
Let $J$ be a semi-simple cubic Jordam algebra, we define the group 
$$ NP(J)=\{g \in GL(J)| N(gA)=N(A), \forall A \in J\}.$$
To describe the action of $NP(J)$ on $J$, we recall the following definition.
\begin{defn}
Let $J$ be a semi-simple cubic Jordan algerbra and let $N$ be the cubic norm on $J$. The rank of an element $x \in J$ is defined by the following relations:
\begin{align*}
\textit{rank}\,\, x=3 \,\, &\textit{if} \,\, N(x) \not=0;\\
	\textit{rank}\,\, x=2 \,\, &\textit{if} \,\, N(x)=0  \,\,\textit{and} \,\,x^{\#} \not=0;\\
	\textit{rank}\,\, x=1 \,\, &\textit{if} \,\, x^{\#}=0  \,\,\textit{and} \,\,x \not=0;\\
	\textit{rank}\,\, x=0 \,\, &\textit{if} \,\, x=0.
\end{align*}
\end{defn}
\begin{prop}\cite[Proposition 12]{Kr}
Let $\BA$ be a complex composition algebra. Let $J=\CH_{3}(\BA)$ be the Hermitian cubic Jordan algebra over $\BA$. Then the rank is invariant under the action of $NP(J)$.  And $NP(J)$ acts transitively on the sets of elements of rank 0,1,2 respectively. In the case of rank $3$, the group acts transitively on the elements of a given norm $k \in \BC^{*}$. 
\end{prop}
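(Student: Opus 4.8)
The plan is to treat the two assertions in turn. First I would check that each of the four conditions defining the rank is preserved by $NP(J)$; the only input beyond formal manipulation is the behavior of the quadratic adjoint $\#$ under the action. Then I would deduce all the transitivity statements from a single geometric fact, the transitivity of the full norm-similarity group on each rank stratum, together with bookkeeping of the multiplier character.

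For the invariance of rank, recall that since $J$ is semi-simple the trace form $Tr$ is nondegenerate, so every $g \in GL(J)$ admits an adjoint $g^{*}$ with $Tr(gx,y) = Tr(x,g^{*}y)$. Given $g \in NP(J)$, polarizing $N(gx) = N(x)$ --- substitute $x \mapsto x + ty$ and compare coefficients of $t$ --- gives $N(gx,gx,gy) = N(x,x,y)$ for all $x,y \in J$, hence $Tr((gx)^{\#},gy) = Tr(x^{\#},y)$, and therefore
\begin{equation*}
(gx)^{\#} = (g^{*})^{-1}\bigl(x^{\#}\bigr) \qquad \text{for all } x \in J .
\end{equation*}
Since $(g^{*})^{-1}$ is invertible we get $x^{\#} = 0 \iff (gx)^{\#} = 0$; combined with $N(gx) = N(x)$ this shows $g$ preserves each of the conditions ``$N(x) \neq 0$'', ``$N(x) = 0,\ x^{\#} \neq 0$'', ``$x^{\#} = 0,\ x \neq 0$'', ``$x = 0$''. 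Hence the rank is $NP(J)$-invariant.

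For transitivity, write $\mathrm{Str}(J) = \{g \in GL(J) : N(gx) = \mu(g)\,N(x)\ \text{for some } \mu(g) \in \BC^{*}\}$ for the structure group and $\mu : \mathrm{Str}(J) \to \BC^{*}$ for its multiplier character, so that $NP(J) = \ker\mu$ (and $\mu$ is surjective, e.g.\ $\mu(\lambda\,\mathrm{id}) = \lambda^{3}$). The key claim is that $\mathrm{Str}(J)$ acts transitively on the elements of each fixed rank. For $J = \CH_{3}(\BA)$ this is the cubic-form analogue of the diagonalization of Hermitian matrices: using the norm similarities given by diagonal, monomial, and elementary unitriangular ``congruences'' $X \mapsto g X \bar g^{T}$, one brings any rank-$1$ element to $\mathrm{diag}(1,0,0)$, any rank-$2$ element to $\mathrm{diag}(1,1,0)$, and any invertible element to $\mathrm{diag}(1,1,k)$ with $k = N(x)$. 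Granting this, the proposition is a diagram chase with $\mu$. If $x$ has rank $\le 2$ then $N(x) = 0$ and the $\mathrm{Str}(J)$-stabilizer of $x$ already maps onto $\BC^{*}$ under $\mu$ --- the Peirce grading attached to the idempotent part of the normal form of $x$ produces a one-parameter group of norm similarities fixing $x$ with arbitrary multiplier --- so $\mathrm{Str}(J) = NP(J)\cdot \mathrm{Stab}_{\mathrm{Str}(J)}(x)$ and the $NP(J)$-orbit of $x$ coincides with its $\mathrm{Str}(J)$-orbit, i.e.\ with the whole rank stratum. If $x$ is invertible then, for $g \in \mathrm{Str}(J)$, $N(gx) = N(x)$ holds iff $\mu(g) = 1$; hence the $NP(J)$-orbit of $x$ is exactly $\{\,y : N(y) = N(x)\,\}$, which is the stated transitivity on the fibers of $N$ over $\BC^{*}$.

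The hard part is the transitivity of $\mathrm{Str}(J)$ on each rank stratum, i.e.\ the normal-form reduction just invoked. In the associative cases $\BA = \BC,\ \BC\oplus\BC,\ \BH_{\BC}$ this is ordinary Hermitian Gram--Schmidt over the split algebra $\BA$ (scale a nonzero vector to a unit vector, split off hyperbolic pairs), but for $\BA = \BO_{\BC}$ non-associativity obstructs the naive congruence calculus, and one must instead verify the needed elementary norm similarities directly from the explicit product and norm formulas on $\CH_{3}(\BO_{\BC})$ --- equivalently, via the $\mathrm{Spin}_{8}$-triality underlying them. This is exactly the content of \cite[Proposition 12]{Kr}, and in a self-contained account I would either reproduce that computation or appeal to the general orbit classification for reduced cubic Jordan algebras over an algebraically closed field.
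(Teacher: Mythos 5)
The paper offers no proof of this statement at all: it is imported verbatim from Krutelevich \cite[Proposition 12]{Kr}, so there is no internal argument to compare yours against. Judged on its own terms, your formal skeleton is correct. The polarization computation giving $(gx)^{\#}=(g^{*})^{-1}(x^{\#})$ (using nondegeneracy of the trace form) does establish rank invariance, and the multiplier-character bookkeeping is the right way to pass from the structure group $\mathrm{Str}(J)$ to $NP(J)=\ker\mu$: for the rank $\le 2$ strata the stabilizer of a normal form such as $\mathrm{diag}(1,1,0)$ contains congruences by $\mathrm{diag}(1,1,\lambda)$ with multiplier $\lambda^{2}$, so $\mu$ is already surjective on the stabilizer and the $NP(J)$- and $\mathrm{Str}(J)$-orbits coincide, while on invertible elements the condition $\mu(g)=1$ exactly cuts out the fibers of $N$ over $\BC^{*}$.

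The gap is where you yourself locate it: the transitivity of $\mathrm{Str}(J)$ on each rank stratum, i.e.\ the normal-form reduction to $\mathrm{diag}(1,0,0)$, $\mathrm{diag}(1,1,0)$, $\mathrm{diag}(1,1,k)$. For $\BA=\BC,\ \BC\oplus\BC,\ \BH_{\BC}$ the Gram--Schmidt/congruence argument is fine, but for $\CH_{3}(\BO_{\BC})$ this reduction is essentially the whole content of the proposition being proved, and deferring it to \cite[Proposition 12]{Kr} makes the attempt circular as a self-contained proof. That said, you are explicit about this, and in that respect your write-up is no less complete than the paper, which simply cites the result; to close the gap one would reproduce Krutelevich's explicit elementary norm similarities (or invoke the general orbit classification for reduced cubic Jordan algebras over an algebraically closed field) rather than cite the statement itself.
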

Given a semi-simple cubic Jordan algebra $J$, the set of $Zorn$ $matrices$ over $J$ is defined as:
\begin{equation*}
 \mathcal{C}_{J}=\Bigg\{ \left(
	\begin{matrix}
		 a & b \\
	c & d
	\end{matrix}
	\right)
	, a,d \in \mathbb{C},\, b,c \in J.
	\Bigg\}
\end{equation*}
The determinant of a Zorn matrix $A=(a,b,c,d)$ is given by: 
\begin{equation}
	det(A)=(ad-Tr(b,c))^{2}-4Tr(b^{\#},c^{\#})+4aN(c)+4dN(b). 
\end{equation}
\par 
We then define $D$ to be the symmetric quadrilinear map on $\mathcal{C}_{J}$ with $D(A,A,A,A)=det(A)$. One can also define a natural skew bilinear form $w$ on $\mathcal{C}_{J}$ defined by:
\begin{equation}\label{w}
	w(A,A')=ad'-a'd+Tr(b',c)-Tr(b,c').
\end{equation}
This form is nondegenerate as the trace form $Tr( \,,)$ on $J$ is nondegenerate. One then can define the cubic adjoint map $\flat$ on $\mathcal{C}_{J}$ by the properties $w(A^{\flat},B)=D(A,A,A,B)$ and $(A^{\flat})^{\flat}=-det(A)^{2}A$. Let $\widetilde{H}_{J}$ be the group of linear automorphism of $\mathcal{C}_{J}$ preserving both det and $w$. Then the dictionary reads as follows:
\begin{thm}\cite[Proposition 4.4]{Cl} and \cite[Table 1]{Kr}.\label{dict_01}
	Up to a finite cover or a finite subgroup, the irreducible representations $(H_{G},W_{G})$ are the same as the representations $(\widetilde{H}_{J},\mathcal{C}_{J})$, where $G$ ranges over simple groups which are not of type $A$ or $C$ and $J$ ranges over semi-simple cubic Jordan algebras.
\end{thm}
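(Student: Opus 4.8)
The plan is to realize both pairs inside a single contact‑graded simple Lie algebra and match them there. On the group side, recall that $\mathfrak{g}$ carries a contact ($5$‑step) grading $\mathfrak{g}=\mathfrak{g}_{-2}\oplus\mathfrak{g}_{-1}\oplus\mathfrak{g}_{0}\oplus\mathfrak{g}_{1}\oplus\mathfrak{g}_{2}$ with $\dim\mathfrak{g}_{\pm2}=1$ coming from the highest root; since $\mathfrak{g}$ is not of type $A$ or $C$, $\mathfrak{g}_{-1}$ is irreducible under $\mathfrak{g}_{0}$, and one takes $x\in X_{ad}$ to be the class of $\mathfrak{g}_{2}$, $W\cong\mathfrak{g}_{-1}\cong\mathfrak{g}_{1}^{\vee}$, and $H$ the semisimple part of $\mathfrak{g}_{0}$. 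The grading equips $W$ with exactly the structure of a Freudenthal triple system: the bracket $[\,\cdot\,,\cdot\,]\colon\mathfrak{g}_{1}\times\mathfrak{g}_{1}\to\mathfrak{g}_{2}=\mathbb{C}$ is a scalar multiple of the symplectic form $w$, and $v\mapsto(\ad v)^{4}(f)$ for a fixed generator $f$ of $\mathfrak{g}_{-2}$ is a symmetric quartic form on $\mathfrak{g}_{1}$ whose projectivized zero locus is the tangent variety $TS$ of Theorem~\ref{proj_geom}. Thus $(H,W)$ together with $w$ and this quartic is intrinsic to the contact grading of $\mathfrak{g}$.

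Symmetrically, one runs the Freudenthal triple system / Tits--Kantor--Koecher construction from the Jordan side. For a semisimple cubic Jordan algebra $J$, the Zorn matrix space $\mathcal{C}_{J}$ with its quartic form $D$ and symplectic form $w$ is a Freudenthal triple system, $\widetilde{H}_{J}$ is reductive, and the standard bracket makes $\widehat{\mathfrak{g}}(J):=\mathfrak{sl}_{2}\oplus\Lie(\widetilde{H}_{J})\oplus(\mathbb{C}^{2}\otimes\mathcal{C}_{J})$ into a Lie algebra which is simple (semisimple when $J=\CH_{3}(0)$). This $\widehat{\mathfrak{g}}(J)$ carries its own contact $5$‑grading: the $\pm2$ parts are the extreme weight lines of the $\mathfrak{sl}_{2}$‑factor, the degree‑zero part is the Cartan of $\mathfrak{sl}_{2}$ together with $\Lie(\widetilde{H}_{J})$ and has semisimple part $\Lie(\widetilde{H}_{J})$, and the degree $-1$ part is $\mathcal{C}_{J}$ with its $\widetilde{H}_{J}$‑action; its Freudenthal quartic and symplectic form are exactly $D$ and $w$. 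Comparing with the previous paragraph, the theorem reduces to identifying $\widehat{\mathfrak{g}}(J)$ with $\mathfrak{g}$ for the appropriate pairing, i.e. to matching the two triple systems.

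I would then carry out the matching case by case. A dimension count already pins it down among the relevant candidates: $\dim\mathcal{C}_{J}=2+2\dim J$, so with $\dim J=3,6,9,15,27$ for $J=\CH_{3}(0),\CH_{3}(\mathbb{C}),\CH_{3}(\mathbb{C}\oplus\mathbb{C}),\CH_{3}(\mathbb{H}_{\mathbb{C}}),\CH_{3}(\mathbb{O}_{\mathbb{C}})$ one gets $\dim\mathcal{C}_{J}=8,14,20,32,56=\dim W_{G}$ and $\dim\widehat{\mathfrak{g}}(J)=3+\dim\widetilde{H}_{J}+2\dim\mathcal{C}_{J}=28,52,78,133,248$, which together with the contact grading and the (easily verified) irreducibility of $\mathcal{C}_{J}$ forces types $D_{4},F_{4},E_{6},E_{7},E_{8}$ respectively. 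More explicitly, one identifies $\widetilde{H}_{J}$ with the group in Table~1: $SL_{2}\times SL_{2}\times SL_{2}$ on $\mathbb{C}^{2}\otimes\mathbb{C}^{2}\otimes\mathbb{C}^{2}$ with $D$ the hyperdeterminant; $SP_{6}$ on $\wedge^{3}_{0}\mathbb{C}^{6}$; $SL_{6}$ on $\wedge^{3}\mathbb{C}^{6}$; $Spin_{12}$ on a half‑spin module; $E_{7}$ on its $56$‑dimensional minuscule module; in each case $D$ and $w$ are forced to be the projectively unique quartic and symplectic invariants, hence agree up to scalars with those extracted from $\mathfrak{g}$. The groups $\widetilde{H}_{J}$ and $H_{G}$ may differ by a finite central subgroup or quotient (for instance $SL_{6}$ versus $SL_{6}/\mu_{3}$ on $\wedge^{3}\mathbb{C}^{6}$, or the extra $S_{3}$ appearing in Corollary~\ref{S3} for $G=D_{4}$); this is precisely what ``up to a finite cover or a finite subgroup'' absorbs, and the induced projective actions on $\mathbb{P}W$ coincide exactly, which is all that the sequel uses. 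The one genuinely substantive input --- that the triple system built from $J$ really is the one coming from the contact grading of $\mathfrak{g}$, equivalently that $\widehat{\mathfrak{g}}(J)$ is simple of the predicted type --- is the content of \cite[Proposition~4.4]{Cl} together with the classification \cite[Table~1]{Kr}, which I would invoke rather than reprove. Accordingly, the main obstacle is organizational: keeping the normalizations of $D$, $w$, and the Lie brackets mutually compatible across the five cases and tracking the finite central discrepancies, rather than any conceptual difficulty.
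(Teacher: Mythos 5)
The paper offers no proof of this statement at all---it is imported verbatim from \cite[Proposition 4.4]{Cl} and \cite[Table 1]{Kr}, with the reader referred to Section 4 of \cite{Cl} for details---and your outline (extracting the symplectic form and quartic from the contact $5$-grading of $\fg$, building the graded Lie algebra $\widehat{\fg}(J)=\mathfrak{sl}_{2}\oplus\mathrm{Lie}(\widetilde{H}_{J})\oplus(\BC^{2}\otimes\CC_{J})$ from the Freudenthal triple system on Zorn matrices, and identifying the two by the dimension count $28,52,78,133,248$ up to finite centers) is precisely the strategy of that cited proof, with the one substantive step (simplicity of $\widehat{\fg}(J)$ of the predicted type) likewise delegated to the same references, so your proposal is consistent with what the paper does. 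The only slips are cosmetic and harmless: $\widehat{\fg}(\CH_{3}(0))\cong\mathfrak{so}_{8}$ is in fact simple, not merely semisimple, and type $C$ is excluded because its quartic/subadjoint variety degenerates (the adjoint variety carries no lines), not because $\fg_{-1}$ fails to be irreducible there.
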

For a detailed proof see \cite[Section 4]{Cl}. We give some remarks here for further discussions.  
\begin{remark}\label{top_G}
	Given a cubic Jordan algebra $J$ associated to a simple group $G$ as above, denote the connected component of $\widetilde{H}_{J}$ containing the identity by  $(\widetilde{H}_{J})^{o}$ and denote the image of $(\widetilde{H}_{J})^{o}$ in $PGL(\CC_{J})$ by $\overline{H}_{J}$.
	Then under the identification $W_{G} \cong \CC_{J}$, $H_{G}$ is a finite cover of $(\widetilde{H}_{J})^{o}$ and hence a finite cover of $\overline{H}_{J}$. The action of $H_{G}$ on $Gr(2,W_{G})$ is induced by the action of $\overline{H}_{J}$ on $Gr(2,\CC_{J})$. 
\end{remark}

\begin{example}
If $J=\CH_{3}(0)$, then $\CC_{J}=\{A=(a,(b_{1},b_{2},b_{3}),(c_{1},c_{2},c_{3}),d)): a,b_{i},c_{j},d \in \BC\}$.  The forms $D$ and $w$ are given by:	
\begin{align*}
	det(A)=&a^{2}d^{2} + b_{1}^{2}c_{1}^{2}+ b_{2}^{2}c_{2}^{2}+ b_{3}^{2}c_{3}^{2}+4(ac_{1}c_{2}c_{3}+b_{1}b_{2}b_{3}d)\\ &-2(ab_{1}c_{1}d+ab_{2}c_{2}d+ab_{3}c_{3}d+b_{1}b_{2}c_{1}c_{2}+b_{2}b_{3}c_{2}c_{3}+b_{1}b_{3}c_{1}c_{3}).\\
	w(A,A')=&ad'-(b_{1}c_{1}'+b_{2}c_{2}'+b_{3}c_{3}')+(b_{1}'c_{1}+b_{2}'c_{2}+b_{3}'c_{3})-a'd.
\end{align*}
 And we can identify  $\CC_{J}$ with $\BC^{2} \otimes \BC^{2} \otimes \BC^{2}$ as follows. Take $e_{1},e_{2}$ to be a basis of $\BC^{2}$ and write $e_{ijk}=e_{i}\otimes e_{j} \otimes e_{k}$, then we identify:
 \begin{align*}
 	(0,(1,0,0),(0,0,0),0) &\leftrightarrow e_{211}, (0,(0,1,0),(0,0,0),0) \leftrightarrow e_{121}, (0,(0,0,1),(0,0,0),0) \leftrightarrow e_{112}, \\
 	(0,(0,0,0),(1,0,0),0) &\leftrightarrow e_{122}, (0,(0,0,0),(0,1,0),0) \leftrightarrow e_{212}, (0,(0,0,0),(0,0,1),0) \leftrightarrow e_{221}, \\  
  (1,(0,0,0),(0,0,0),0) &\leftrightarrow e_{111}, (0,(0,0,0),(0,0,0),1)\leftrightarrow e_{222}.
  \end{align*}
\end{example}
Under the correspondence $(Lie(H_{G}),W_{G}) \cong (Lie(\overline{H}_{J}),\CC_{J})$, the subadjoint variety $S=S_{G}$ is realized as a compactification of the algebra $J$. More precisely, $S$ is the proper image of the following birational map:
\begin{align}\label{bir_map}
	Q_{J}:  \mathbb{P}(\mathbb{C} \oplus J) &\dashrightarrow \mathbb{P}(\mathbb{C} \oplus J \oplus J \oplus \mathbb{C})=\mathbb{P}\CC_{J}\\
	[t:\alpha] &\rightarrow [t^{3}:t^{2}\alpha:t\alpha^{\#}:N(\alpha)] \nonumber
\end{align}
In particular we can identify $J$ with an open subset of $S$  as follows:
\begin{align}
	q_{J}: J &\rightarrow S,\\
	\alpha &\rightarrow [1:\alpha:\alpha^{\#}:N(\alpha)]. \nonumber
\end{align}
This identification induces a natural $NP(J)$-action on $S$. We describe this action geometrically as follows.
Denote $x_{0}=Q_{J}([1:0])$, we naturally identify $T_{x_{0}}S$ with $J$ through  $q_{J}$. Denote the stabilizer subgroup of the action of $\overline{H}_{J}$ on $S$ at $x_{0}$ by $Stab_{x_{0}}(\overline{H}_{J})$. It is a parabolic subgroup which is the product of a vector group and a Levi subgroup $L_{J}$.  $L_{J}$ is the product of a one parameter subgroup $\mathbb{G}_{m}$ and a semi-simple subgroup $M_{J}$, where the action of $\mathbb{G}_{m}$ on $\CC_{J}$ is given as follows:
\begin{align}\label{dis_c*}
	\mathbb{C}^{*} \times \CC_{J} &\rightarrow \CC_{J} \\
	(t, (a,b,c,d)) &\rightarrow (t^{3}a,tb,t^{-1}c,t^{-3}d).\nonumber
\end{align}
And the isotropic action of $M_{J}$ on $T_{x_{0}}(S) \cong J$ preserves the cubic form on $J$, which induces an isogeny of $M_{J}$ onto $NP(J)^{o}$, namely the connected component of $NP(J)$ containing the identity. In particular from Proposition 2.3 we have the following fact:
\begin{cor}
Let $G$ be of type $F_{4},E_{6},E_{7}$ or $E_{8}$. Denote the subadjoint variety associated to $G$ by $S_{G} \subseteq \BP W_{G}$, and let $x$ be a point on $S_{G}$. Then the isotropy action of the stabilizer subgroup $Stab_{x_{0}}(H_{G})$ on $\BP T_{x}S_{G}$ has three orbits.
\end{cor}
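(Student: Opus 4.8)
The statement to prove is Corollary 2.9: for $G$ of type $F_4, E_6, E_7$ or $E_8$, with $S_G \subseteq \mathbb{P}W_G$ the subadjoint variety and $x$ a point of $S_G$, the isotropy action of $\mathrm{Stab}_{x_0}(H_G)$ on $\mathbb{P}T_x S_G$ has exactly three orbits. The plan is to transport everything to the Jordan-algebra picture and read off the orbit structure from Proposition 2.3. First I would use homogeneity: $\overline{H}_J$ (equivalently $H_G$, up to the finite cover of Remark 2.7) acts transitively on $S_G$, so it suffices to work at the distinguished point $x_0 = Q_J([1:0])$, where $q_J$ identifies $T_{x_0} S_G$ with $J$. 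Thus $\mathbb{P}T_{x_0}S_G \cong \mathbb{P}J$, and the isotropy action factors through the Levi $L_J = \mathbb{G}_m \cdot M_J$; the $\mathbb{G}_m$-factor acts by scalars on $J$ (by the displayed weights in \ref{dis_c*}, it acts with a single weight on the summand $J = \{(0,b,0,0)\}$, hence trivially on $\mathbb{P}J$), so the orbits of $\mathrm{Stab}_{x_0}(H_G)$ on $\mathbb{P}J$ coincide with the orbits of $M_J$, which in turn — since $M_J \to NP(J)^o$ is an isogeny — coincide with the orbits of $NP(J)^o$ on $\mathbb{P}J$.

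Next I would invoke Proposition 2.3: for $J = \mathcal{H}_3(\mathbb{A})$ with $\mathbb{A}$ a complex composition algebra, $NP(J)$ preserves the rank stratification $J = J_0 \sqcup J_1 \sqcup J_2 \sqcup J_3$ and acts transitively on each of $J_1, J_2$ (the rank-$0$ stratum is just $\{0\}$, which disappears in $\mathbb{P}J$), while on $J_3$ it acts transitively on each norm level set $\{N = k\}$, $k \in \mathbb{C}^*$. Passing to $\mathbb{P}J$: the projectivizations of $J_1$ and $J_2$ are two orbits, and the projectivization of $J_3$ is a single orbit, because scaling $\alpha \mapsto \lambda\alpha$ sends $\{N = k\}$ to $\{N = \lambda^3 k\}$ and these scalings commute with (indeed are accounted for by) combining the $NP(J)^o$-action with the ambient $\mathbb{G}_m \subseteq L_J$ — so all rank-$3$ lines are in one orbit. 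This gives exactly three orbits: $\mathbb{P}J_1$, $\mathbb{P}J_2$, $\mathbb{P}J_3$. One subtlety to check here is that Proposition 2.3 is stated for the possibly-disconnected group $NP(J)$, whereas we have access only to $NP(J)^o$; I would argue that the transitivity already holds for the identity component — this is standard for these $\theta$-group situations (the strata $J_i$ and the norm level sets are irreducible, and a connected group acting on an irreducible variety with a dense orbit whose complement has strictly smaller dimension forces transitivity once one knows the orbit closures exhaust the stratum), or alternatively cite the relevant statement in \cite{Kr} at the level of $NP(J)^o$.

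A cleaner route that sidesteps the connectedness point entirely: one can compare directly with the known four-orbit decomposition of $H_G$ acting on $\mathbb{P}W_G$ (Theorem \ref{proj_geom}, the four orbits being $S_G$, its tangent variety $TS_G$ minus $S_G$ — or rather the secant/tangent strata — and so on). The stabilizer $\mathrm{Stab}_{x_0}(H_G)$ acts on the normal directions, and the projectivized tangent cone $\mathbb{P}T_{x_0}S_G$ inherits a stratification from how the lines through $x_0$ meet the orbit stratification of $S_G$; matching these up gives three strata. However, I expect the Jordan-algebra argument above to be the most economical.

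\textbf{Main obstacle.} The only real point requiring care is the descent from $NP(J)$ to $NP(J)^o$ (equivalently, confirming that the isogeny image of $M_J$ is $NP(J)^o$ rather than all of $NP(J)$, as already asserted in the text before \ref{dis_c*}) and checking that the residual scaling ambiguity on the rank-$3$ stratum is genuinely absorbed — i.e. that the $\mathbb{G}_m \subseteq L_J$ does act nontrivially enough on $\mathbb{P}J$ to collapse the one-parameter family of norm level sets $\{N = k\}$ to a single projective orbit. Re-examining \ref{dis_c*}: on the summand $b \in J$ the torus acts by $b \mapsto tb$, which is trivial on $\mathbb{P}J$ — so the $\mathbb{G}_m$ in $L_J$ does \emph{not} help, and the collapsing must come purely from the fact that in $\mathbb{P}J$ the sets $\mathbb{P}\{N=k\}$ for varying $k \in \mathbb{C}^*$ are literally the same subset (namely $\mathbb{P}J_3 = \{[\alpha] : N(\alpha) \neq 0\}$), and $NP(J)^o$ already acts transitively on each affine cone level, hence transitively on their common projectivization once one notes that any two rank-$3$ lines contain representatives of equal norm. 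So the bookkeeping is: $\mathbb{P}J_3$ is one orbit because $NP(J)^o$ moves $\{N = k\}$ onto itself transitively and every line meets some such level set — no torus needed. With that clarified, the three orbits $\mathbb{P}J_1 \subsetneq \overline{\mathbb{P}J_1} \cup \mathbb{P}J_2 \subsetneq \mathbb{P}J$ are exactly as claimed, completing the proof.
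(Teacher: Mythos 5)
Your proposal is correct and follows essentially the same route as the paper, which derives the corollary directly from Proposition 2.3 via the identification $T_{x_0}S_G\cong J$, the factorization of the isotropy action through the Levi $L_J=\mathbb{G}_m\cdot M_J$, and the isogeny $M_J\to NP(J)^o$, so that the three orbits are the projectivized rank $1$, $2$, $3$ strata of $J$. Your extra care about the rank-$3$ stratum (transitivity on one norm level set sufficing after scaling within a line) and about $NP(J)^o$ versus $NP(J)$ only fills in details the paper leaves implicit.
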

\begin{defn}
Let $x$ be a point on the subadjoint variety $S_{G}$. A tangent direction $[w] \in \BP T_{x}S_{G}$ is said to be generic if $[w]$ lies in the open orbit of the action of $Stab_{x}(H_{G})$ on $\BP T_{x}S_{G}$.	
\end{defn}
\subsubsection{Triality}
The second one is given from the following construction of exceptional Lie algebras as well as their special representations in \cite{LM02} using composition algebras which highlights the $triality$ principle. This construction enables us to define an $S_{3}$-action on $\BP W_{G}$.
 \begin{defn}
 	Let $\BA$ be a complex composition algebra, denote the triality group by
 	\begin{equation*}
T(\BA)=\{\theta=(\theta_{1},\theta_{2},\theta_{3}) \in SO(\BA)^{3},\theta_{3}(xy)=\theta_{1}(x)\theta_{2}(y), \forall x,y \in \BA \}.
 	\end{equation*}
 	\end{defn}
It is known that  $T(\BA)$ is an algebraic subgroup of $SO(\BA)^{3}$.
  There are three actions of $T(\BA)$ on $\BA$ corresponding to its three projections on $SO(\BA)$, and we denote these representations by $\BA_{1},\BA_{2},\BA_{3}$. Denote by $t(\BA)$ the Lie algebra of the triality group $T(\BA)$. Let $\BA$ and $\BA'$ be two composition algebras, and let
  \begin{equation*}
   \mathfrak{g}(\BA,\BA')=\mathfrak{t}(\BA) \times \mathfrak{t}(\BA') \oplus (\BA_{1} \otimes \BA'_{1}) \oplus (\BA_{2} \otimes \BA'_{2})  \oplus (\BA_{3} \otimes \BA'_{3}).
   \end{equation*}
Then $\mathfrak{g}(\BA,\BA')$ has a natural structure of Lie algebra, which is semi-simple and whose type is given by Freudental's magic square(\cite[Theorem 2.1]{LM02}).  If $\BA'=\BH_{\BC}$, then $\mathfrak{g}(\BA,\BH_{\BC})= \mathfrak{sp}_{6}, \mathfrak{sl}_{6},  \mathfrak{so}_{12},$ or $\mathfrak{e}_{7}$, which  corresponds to $Lie(H)$ of the last four rows in Table 1. And  we can identify $\mathfrak{t}(\BH_{\BC})$ with $\mathfrak{sl}_{2} \times \mathfrak{sl}_{2} \times \mathfrak{sl}_{2}$ by(cf. \cite[Lemma 1]{BS}):
 \begin{equation*}
 	\mathfrak{sl}_{2} \times \mathfrak{sl}_{2} \times \mathfrak{sl}_{2} \rightarrow \mathfrak{t}(\BH_{\BC}) : \,\,\,\,
 	(x,y,z) \rightarrow (l_{y}-r_{z},l_{z}-r_{x},l_{y}-r_{x}),
 \end{equation*}
 where we identify $\mathfrak{sl}_{2}=Im(\BH_{\BC})$ and $l_{x}, r_{x}$ denote the operators of left and right multiplication by $x$, respectively. We take the Cartan decomposition of $\fs\fl_{2}=Im(\BH_{\BC})$ as $Im(\BH_{\BC})=\langle h \rangle \oplus \langle x \rangle \oplus\langle y \rangle$, where 
 \[
h=\begin{pmatrix}
1 & 0 \\
0 & -1
\end{pmatrix},
x=\begin{pmatrix}
	0 & 1 \\
	0 & 0
\end{pmatrix},
y=\begin{pmatrix}
	0 & 0 \\
	1 & 0
\end{pmatrix}.
 \]
We denote by $\alpha \in \langle h \rangle ^{\vee}$ the simple root satisfying $\alpha(h)=2$. And let $\alpha_{1},\alpha_{2},\alpha_{3}$ be the respective simple roots of $\ft(\BH_{\BC})$ corresponding to each factor. Denote by $w_{1},w_{2},w_{3}$ the corresponding fundamental dominant wights of $\ft(\BH_{\BC})$. Let $U_{1},U_{2},U_{3}$ be the natural 2-dimensional representations of our three copies of $\mathfrak{sl}_{2}$, then $$(\BH_{\BC})_{1}=U_{2}\otimes U_{3},(\BH_{\BC})_{2}=U_{3}\otimes U_{1}, (\BH_{\BC})_{3}=U_{1}\otimes U_{2}.$$ \par 
The representation $(H,W)$ can be recovered as follows:
 \begin{thm}\cite[Theorem 4.1]{LM02}\label{dict_02}
 	Let $V_{\BA}=\BA_{1} \otimes U_{1} \oplus \BA_{2} \otimes U_{2} \oplus \BA_{3} \otimes U_{3} \oplus U_{1} \otimes U_{2} \otimes U_{3}$. Then:\par 
 	(i) $V_{\BA}$ has a natural structure of $\mathfrak{g}(\BA,\BH_{\BC})$-module, which is simple and isomorphic to $(Lie(\widetilde{H}_{\mathcal{H}_{3}(\BA)}),\CC_{\mathcal{H}_{3}({\BA})})$. \par 
 	(ii)	The action of the subalgebra $\mathfrak{t}(\BH_{\BC}) \cong \mathfrak{sl}_{2} \times \mathfrak{sl}_{2} \times \mathfrak{sl}_{2}$ on $V_{\BA}$ is given by:
 	\begin{equation*}
 		(x_{1},x_{2},x_{3}).(a_{i} \otimes u_{i})=a_{i} \otimes x_{i}.u_{i}, \,\, \text{for}\,\, i=1,2,3 \,\, \text{and} \,\,  (x_{1},x_{2},x_{3}).(u_{1} \otimes u_{2} \otimes u_{3})=x_{1}.u_{1} \otimes x_{2}.u_{2} \otimes x_{3}.u_{3}, 
 	\end{equation*}
 In particular the sub-representation $(\mathfrak{t}(\BH_{\BC}),U_{1} \otimes U_{2} \otimes U_{3})$ is isomorphic to $(Lie(H_{D_{4}}),W_{D_{4}})$. \par 
 (iii) A highest weight vector of the module $(\mathfrak{t}(\BH),U_{1} \otimes U_{2} \otimes U_{3})$ is a highest weight vector of the module $(\mathfrak{g}(\BA,\BH_{\BC}),V_{\BA})$.
\end{thm}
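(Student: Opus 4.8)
The plan is to recognize the statement as the explicit incarnation, for the second slot $\BA'=\BH_{\BC}$, of the Tits--Vinberg--Landsberg--Manivel magic-square construction of $\fg(\BA,\BA')$ together with its preferred module $W_G$, and then to pin the module down by a highest-weight and dimension computation. The Lie-algebra structure of $\fg(\BA,\BH_{\BC})$ and its semisimplicity are already available (stated above, citing \cite[Theorem 2.1]{LM02}), so the first task is purely organizational: using the isomorphism $\ft(\BH_{\BC})\cong\mathfrak{sl}_2\times\mathfrak{sl}_2\times\mathfrak{sl}_2$ and the triality decomposition $(\BH_{\BC})_i=U_{i+1}\otimes U_{i+2}$ (indices mod $3$) recalled above, rewrite
\[
\fg(\BA,\BH_{\BC})=\ft(\BA)\ \oplus\ (\mathfrak{sl}_2\times\mathfrak{sl}_2\times\mathfrak{sl}_2)\ \oplus\ \bigoplus_{(i,j,k)}\BA_i\otimes U_j\otimes U_k ,
\]
the last sum over the cyclic permutations $(i,j,k)$ of $(1,2,3)$, where each $\BA_i$ carries the $T(\BA)$-invariant nondegenerate symmetric form coming from $\BA_i\subseteq SO(\BA)$, triality supplies the $T(\BA)$-equivariant multiplications $\BA_i\otimes\BA_j\to\BA_k$, and each $U_i$ carries its $\mathfrak{sl}_2$-invariant symplectic form $\langle\,,\,\rangle_i$.

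Next I would define the action of $\fg(\BA,\BH_{\BC})$ on $V_{\BA}=\bigoplus_i(\BA_i\otimes U_i)\oplus(U_1\otimes U_2\otimes U_3)$ component by component: $\ft(\BA)$ acts on $\BA_i\otimes U_i$ through its action on $\BA_i$ and annihilates $U_1\otimes U_2\otimes U_3$; the factor $\mathfrak{sl}_2\times\mathfrak{sl}_2\times\mathfrak{sl}_2$ acts by the formulas of (ii); and a vector of $\BA_i\otimes U_j\otimes U_k$ acts by the four evident contractions --- it sends $\BA_i\otimes U_i$ into $U_i\otimes U_j\otimes U_k=U_1\otimes U_2\otimes U_3$ by pairing $\BA_i$ with $\BA_i$, it sends $\BA_j\otimes U_j$ into $\BA_k\otimes U_k$ (resp. $\BA_k\otimes U_k$ into $\BA_j\otimes U_j$) by multiplying $\BA_i\otimes\BA_j\to\BA_k$ (resp. $\BA_i\otimes\BA_k\to\BA_j$) and contracting the repeated $U$-slot via its symplectic form, and it sends $U_1\otimes U_2\otimes U_3$ into $\BA_i\otimes U_i$ by contracting the $U_j\otimes U_k$-slots via $\langle\,,\,\rangle_j$ and $\langle\,,\,\rangle_k$. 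The core of the argument is then to verify the representation identity $[\xi,\eta]\cdot v=\xi\cdot(\eta\cdot v)-\eta\cdot(\xi\cdot v)$: organizing this by the $\ft(\BA)\oplus(\mathfrak{sl}_2)^{3}$-weights of $\xi,\eta$ and by the graded pieces of $V_{\BA}$, each case reduces to a bilinear relation among the composition-algebra structure constants --- the orthogonality of the multiplication with respect to the norm forms together with the Moufang/triality identities --- precisely the relations that already enter the verification of the Jacobi identity for $\fg(\BA,\BA')$ in \cite{LM02}. I expect this to be the main obstacle: it is entirely routine but bookkeeping-heavy, and care is needed to match conventions (orientation of the triality triples, normalization of the forms) so that all signs close up.

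For simplicity (and for (iii)), let $u_i^{+}\in U_i$ be a highest-weight vector for the Cartan and Borel of the $i$-th $\mathfrak{sl}_2$ and put $v^{+}=u_1^{+}\otimes u_2^{+}\otimes u_3^{+}\in U_1\otimes U_2\otimes U_3\subseteq V_{\BA}$, which is a highest-weight vector of $(\ft(\BH_{\BC}),U_1\otimes U_2\otimes U_3)$. Since $\ft(\BA)\oplus(\mathfrak{sl}_2)^{3}$ contains a Cartan subalgebra $\fh$ of $\fg(\BA,\BH_{\BC})$ --- the ranks agree, being $3,5,6,7$ for $\BA=\BC,\BC\oplus\BC,\BH_{\BC},\BO_{\BC}$ --- the vector $v^{+}$ is an $\fh$-weight vector, and with the positive system of $\fg(\BA,\BH_{\BC})$ chosen compatibly with those of $\ft(\BA)$ and of the three $\mathfrak{sl}_2$'s (as in \cite{LM02}) one checks from the explicit action that $v^{+}$ is annihilated by $\mathfrak{n}^{+}$: all of $\ft(\BA)$ kills $U_1\otimes U_2\otimes U_3$, the positive $\mathfrak{sl}_2$-nilpotents kill the $u_i^{+}$, and the only part of $\BA_i\otimes U_j\otimes U_k$ that can act nontrivially on $v^{+}$ lies in $\BA_i\otimes u_j^{-}\otimes u_k^{-}\subseteq\mathfrak{n}^{-}$ (the relevant contractions $\langle u_j^{+},u_j^{+}\rangle_j$ vanish). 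Hence $v^{+}$ generates an irreducible submodule $V_{\lambda}\subseteq V_{\BA}$; computing $\lambda$ and comparing $\dim V_{\lambda}$ (Weyl's formula) with $\dim V_{\BA}=6\dim_{\BC}\BA+8$, or else checking directly that iterated negative root vectors from the pieces $\BA_i\otimes U_j\otimes U_k$ already span every summand of $V_{\BA}$, yields $V_{\BA}=V_{\lambda}$ --- proving simplicity, which is (i), and (iii).

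Finally, the identification $(\fg(\BA,\BH_{\BC}),V_{\BA})\cong(\Lie(\widetilde{H}_{\CH_3(\BA)}),\CC_{\CH_3(\BA)})$ follows from the magic-square isomorphism $\fg(\BA,\BH_{\BC})\cong\Lie(\widetilde{H}_{\CH_3(\BA)})$ (Theorem \ref{dict_01}) together with the fact that $\CC_{\CH_3(\BA)}$ is an irreducible module of the same dimension and highest weight; for a hands-on variant one writes down the equivariant isomorphism matching the open chart $q_{\CH_3(\BA)}$ of \ref{bir_map}. Part (ii) is then read off directly by restricting the action constructed above to $\mathfrak{sl}_2\times\mathfrak{sl}_2\times\mathfrak{sl}_2$, and its ``in particular'' clause is immediate, since $W_{D_{4}}=\BC^2\otimes\BC^2\otimes\BC^2=U_1\otimes U_2\otimes U_3$ with $H_{D_{4}}=SL_2\times SL_2\times SL_2$ acting in the standard way.
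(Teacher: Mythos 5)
This statement is not proved in the paper at all: it is quoted verbatim from \cite[Theorem 4.1]{LM02}, so there is no internal proof to compare against. Your proposal is, in effect, a reconstruction of the Landsberg--Manivel argument, and its outline is sound: the decomposition $\fg(\BA,\BH_{\BC})=\ft(\BA)\oplus(\fs\fl_{2})^{3}\oplus\bigoplus\BA_{i}\otimes U_{j}\otimes U_{k}$ using $(\BH_{\BC})_{i}=U_{j}\otimes U_{k}$, the definition of the action by the four contraction maps, the highest-weight vector $v^{+}=u_{1}^{+}\otimes u_{2}^{+}\otimes u_{3}^{+}$, the dimension count $\dim V_{\BA}=6\dim\BA+8=14,20,32,56$, and the identification with $(\Lie(\widetilde{H}_{\CH_{3}(\BA)}),\CC_{\CH_{3}(\BA)})$ via Theorem \ref{dict_01} together with uniqueness of the irreducible module of that dimension (the same device this paper uses later to transport the $S_{3}$-action) are all the right ingredients. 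Two caveats, which you partly acknowledge yourself: the verification that the contraction formulas satisfy $[\xi,\eta]\cdot v=\xi\cdot(\eta\cdot v)-\eta\cdot(\xi\cdot v)$ is the actual substance of the theorem and is only described, not carried out --- the sign conventions and normalizations of the triality multiplications are exactly where such a computation can silently fail; and in the highest-weight step the vectors $a\otimes u_{j}^{-}\otimes u_{k}^{-}$ do not automatically lie in $\fn^{-}$ for an arbitrary positive system (their $\ft(\BA)$-weight is unconstrained), so your parenthetical ``chosen compatibly'' is doing real work: you need a positivity for which the $(\fs\fl_{2})^{3}$-component dominates, and then the only elements of $\BA_{i}\otimes U_{j}\otimes U_{k}$ failing to kill $v^{+}$ are indeed negative root vectors. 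With those points made explicit, your sketch would amount to a complete proof of the cited result rather than an appeal to it.
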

Then Proposition 1.3 follows from the following corollary.
\begin{cor}\label{S3}
	Under the identification in Theorem \ref{dict_02}, there is a subgroup  $S_{3} \subset \overline{H}_{\CH_{3}(\BA)}$ which acts on $\mathbb{P} (U_{1} \otimes U_{2} \otimes U_{3})$ by permutations of the factors.
	\end{cor}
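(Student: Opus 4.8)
The plan is to read off the $S_{3}$ from the triality symmetry that is built into the construction of Theorem \ref{dict_02}. The defining relation $\theta_{3}(xy)=\theta_{1}(x)\theta_{2}(y)$ of the triality group, together with the identity $\overline{xy}=\bar y\,\bar x$ for the standard involution of a composition algebra, is invariant under the symmetric group $S_{3}$ permuting the three slots (a transposition being accompanied by the twist $\theta\mapsto\theta^{*}$, $\theta^{*}(x)=\overline{\theta(\bar x)}$). This gives a canonical action of $S_{3}$ on $T(\BA)$ by group automorphisms; for $\BA=\BH_{\BC}$ it is the permutation of the three factors of $T(\BH_{\BC})\cong SL_{2}\times SL_{2}\times SL_{2}$, and it permutes the representations $\BA_{1},\BA_{2},\BA_{3}$ compatibly (these being self-dual, the twist is harmless). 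Performing this simultaneously for $\BA$ and for $\BH_{\BC}$ yields, for each $\sigma\in S_{3}$, a linear automorphism $\phi_{\sigma}$ of
\[
\mathfrak{g}(\BA,\BH_{\BC})=\mathfrak{t}(\BA)\times\mathfrak{t}(\BH_{\BC})\oplus(\BA_{1}\otimes(\BH_{\BC})_{1})\oplus(\BA_{2}\otimes(\BH_{\BC})_{2})\oplus(\BA_{3}\otimes(\BH_{\BC})_{3})
\]
permuting the summands according to $\sigma$, and a linear map $\Psi_{\sigma}$ of
\[
V_{\BA}=\BA_{1}\otimes U_{1}\oplus\BA_{2}\otimes U_{2}\oplus\BA_{3}\otimes U_{3}\oplus U_{1}\otimes U_{2}\otimes U_{3}
\]
permuting the first three summands according to $\sigma$ and acting on $U_{1}\otimes U_{2}\otimes U_{3}$ as the permutation of the tensor factors. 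Since $\phi_{\sigma}$ permutes the three distinct factors of $\mathfrak{t}(\BH_{\BC})$, it is nontrivial for $\sigma\neq e$, so $\sigma\mapsto\phi_{\sigma}$ is an injective homomorphism into $\Aut(\mathfrak{g}(\BA,\BH_{\BC}))$.

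The technical heart is to check that each $\phi_{\sigma}$ is a Lie algebra automorphism and that $\Psi_{\sigma}$ intertwines the module structure of Theorem \ref{dict_02} through $\phi_{\sigma}$, i.e.\ $\Psi_{\sigma}(g\cdot v)=\phi_{\sigma}(g)\cdot\Psi_{\sigma}(v)$. For the subalgebra $\mathfrak{t}(\BH_{\BC})\cong\mathfrak{sl}_{2}\times\mathfrak{sl}_{2}\times\mathfrak{sl}_{2}$ and its action on $V_{\BA}$ this is immediate from the formulas in Theorem \ref{dict_02}(ii), which are visibly invariant under simultaneously permuting the three copies of $\mathfrak{sl}_{2}$ and the factors $U_{1},U_{2},U_{3}$. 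For the remaining brackets of $\mathfrak{g}(\BA,\BH_{\BC})$ and the remaining part of the action on $V_{\BA}$, one uses that in \cite{LM02} every structure constant is assembled from the triality data of $\BA$ and $\BH_{\BC}$ --- the triality representations, the pairings $\BA_{i}\otimes\BA_{i}\to\mathfrak{t}(\BA)$, and the multiplications $\BA_{i}\times\BA_{j}\to\BA_{k}$ with $\{i,j,k\}=\{1,2,3\}$ --- all of which are by construction equivariant under simultaneous permutation of the three indices, so that the identities defining the bracket and the module action are carried into themselves by $\sigma$. I expect this step to be the most laborious, though it is routine bookkeeping once the explicit formulas of \cite{LM02} are written out.

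It remains to see that the resulting subgroup of $PGL(V_{\BA})=PGL(W_{G})$ lies in $\overline{H}_{\CH_{3}(\BA)}$. First, each $\phi_{\sigma}$ is an \emph{inner} automorphism of $\mathfrak{g}(\BA,\BH_{\BC})=\Lie(H_{G})$: for $G$ of type $F_{4}$ and $E_{8}$ this is automatic, as $\mathfrak{sp}_{6}$ and $\mathfrak{e}_{7}$ have no outer automorphisms; for $E_{6}$ and $E_{7}$, under the identifications $\mathfrak{g}(\BA,\BH_{\BC})\cong\mathfrak{sl}_{6}$, resp.\ $\mathfrak{so}_{12}$, the automorphism $\phi_{\sigma}$ is the magic-square incarnation of the leg permutation, namely conjugation by a block permutation of a threefold decomposition of the standard representation ($\BC^{6}=V_{1}\oplus V_{2}\oplus V_{3}$ into three $2$-planes, resp.\ $\BC^{12}=V_{1}\oplus V_{2}\oplus V_{3}$ into three isomorphic quadratic $4$-spaces), which has determinant $1$ and, in the second case, preserves the quadratic form, hence lies in $SL_{6}$, resp.\ $\SO_{12}$, so the conjugation is inner. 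Since $\overline{H}_{\CH_{3}(\BA)}$ is a connected group with Lie algebra $\mathfrak{g}(\BA,\BH_{\BC})$, we have $\Ad(\overline{H}_{\CH_{3}(\BA)})=\mathrm{Inn}(\mathfrak{g}(\BA,\BH_{\BC}))$, so $\phi_{\sigma}=\Ad(\bar h_{\sigma})$ for some $\bar h_{\sigma}\in\overline{H}_{\CH_{3}(\BA)}$. Now $[\Psi_{\sigma}]$ and $\bar h_{\sigma}$ induce the same automorphism of $\Lie(H_{G})$, so $[\Psi_{\sigma}]^{-1}\bar h_{\sigma}$ centralises $\Lie(H_{G})$, hence centralises $\overline{H}_{\CH_{3}(\BA)}$ in $PGL(W_{G})$; as $W_{G}$ is an irreducible $H_{G}$-module, Schur's lemma forces $[\Psi_{\sigma}]=\bar h_{\sigma}$. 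Finally $\sigma\mapsto[\Psi_{\sigma}]$ is a homomorphism (it induces $\sigma\mapsto\phi_{\sigma}$, and two elements of $PGL(W_{G})$ inducing the same automorphism of $\Lie(H_{G})$ coincide by the same Schur argument), injective because $\sigma\mapsto\phi_{\sigma}$ is; its image is the desired copy of $S_{3}$ inside $\overline{H}_{\CH_{3}(\BA)}$, and by construction it acts on $\mathbb{P}(U_{1}\otimes U_{2}\otimes U_{3})$ by permutation of the tensor factors.
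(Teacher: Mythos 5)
Your construction of the $S_{3}$-action itself is the same as the paper's: the paper also starts from the triality automorphisms $\sigma,\tau$ of $\mathfrak{t}(\BA)$, extends them to $\mathfrak{g}(\BA,\BH_{\BC})$, obtains a projective action on $V_{\BA}$ compatible with the automorphisms, and observes that it stabilizes $\mathbb{P}(U_{1}\otimes U_{2}\otimes U_{3})$ and permutes the factors there. Where you genuinely diverge is the containment step $S_{3}\subseteq\overline{H}_{\CH_{3}(\BA)}$. The paper proves that each projective transformation preserves the two defining invariants of $\widetilde{H}_{J}$, the quartic $\det$ and the symplectic form $w$ (this is what Lemma 2.14 and the orbit and line geometry of Theorems 2.12--2.13 are for), and then deals with connectedness of $\widetilde H_{J}$ (Krutelevich's result for the non-$E_{6}$ types, a separate $SL_{6}$ argument for $E_{6}$). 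You instead argue that the induced automorphisms $\phi_{\sigma}$ of $\Lie(H_{G})$ are inner, write $\phi_{\sigma}=\Ad(\bar h_{\sigma})$ with $\bar h_{\sigma}\in\overline{H}_{\CH_{3}(\BA)}$ using $\Ad(\overline H)=\mathrm{Inn}(\mathfrak{g})$ for a connected group, and identify $[\Psi_{\sigma}]$ with $\bar h_{\sigma}$ by Schur's lemma. That mechanism is correct as stated and bypasses the invariant-form geometry entirely, which is an attractive simplification.

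However, two steps you defer are precisely the nontrivial content. First, the intertwining relation for your explicit $\Psi_{\sigma}$ is not mere bookkeeping in the form you describe: the naive permutation of the summands $\BA_{i}\otimes U_{i}$, with no twist on the $\BA_{i}$-components, will in general not intertwine; compare the paper's extension of $\sigma,\tau$ to $\mathfrak{g}(\BA,\BA')$, which requires the conjugations $a\mapsto\bar a$ and a sign on the middle summand. The correct $\Psi_{\sigma}$ must carry analogous twists, and one must check that the twisted maps still act on $U_{1}\otimes U_{2}\otimes U_{3}$ by (projective) permutation and compose to an $S_{3}$ up to scalars; the paper sidesteps this by producing the projective action abstractly from the automorphisms of $\mathfrak{g}$ and reading off the action on the highest weight spaces. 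Second, inner-ness of $\phi_{\sigma}$ is automatic only for $F_{4}$ and $E_{8}$. For $E_{6}$ and $E_{7}$ your justification is the unproved assertion that $\phi_{\sigma}$ is conjugation by a block permutation of the standard representation; for $\mathfrak{sl}_{6}$ this is exactly what needs verification, since $\wedge^{3}\BC^{6}$ is self-dual and so the existence of $\Psi_{\sigma}$ alone cannot exclude the outer class. For $\mathfrak{so}_{12}$ there is a shortcut you did not use: the outer class exchanges the two non-isomorphic half-spin modules, so $V_{\BA}\circ\phi_{\sigma}\cong V_{\BA}$ already forces $\phi_{\sigma}$ inner. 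So the skeleton of your alternative route is sound, but to make it a proof you must carry out the twisted intertwiner computation and give genuine inner-ness arguments in types $E_{6}$ and $E_{7}$; note that the paper's route spends its effort instead on the invariant forms, for which it has already built the projective-geometric tools.
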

Here we first give the construction of the subgroup and verify some properties. We will conclude its proof after introducing some projective properties of subadjoint varieties.\par 
 For any composition algebra $\BA$, there is an $S_{3}$-action on $\mathfrak{t}(\BA)$ generated by the following two automorphisms(cf. \cite[Proposition 3.6.4, Lemma 3.5.9]{SV00}):
\begin{equation*}
\sigma(t_{1},t_{2},t_{3})=(t_{2},\widehat{t_{3}},\widehat{t_{1}}),\,\,\, \tau(t_{1},t_{2},t_{3})=(t_{3},\widehat{t_{2}},t_{1}),
\end{equation*}
	where $\widehat{t}(x):= \overline{t(\overline{x})}$ for any $x \in \BA$. Now given two composition algebras $\BA,\BA'$, we can extend the $S_{3}$-action on $\mathfrak{t}(\BA) \times \mathfrak{t}(\BA')$ to an $S_{3}$-action on $\mathfrak{g}(\BA,\BA')$ generated by:
	\begin{align*}
	\sigma: \,\, &\BA_{1} \otimes \BA'_{1} \rightarrow 	\BA_{3} \otimes \BA'_{3},\,\,\,
		\BA_{3} \otimes \BA'_{3} \rightarrow 	\BA_{2} \otimes \BA'_{2},\,\,\,
			\BA_{2} \otimes \BA'_{2} \rightarrow 	\BA_{1} \otimes \BA'_{1}. \\		
		&a_{1} \otimes a'_{1} \rightarrow \overline{a_{1}} \otimes \overline{a'_{1}},\,\,\,a_{3} \otimes a'_{3} \rightarrow \overline{a_{3}} \otimes \overline{a'_{3}},\,\,\, a_{2} \otimes a'_{2} \rightarrow a_{2} \otimes a'_{2}.
	\end{align*}
	and 
		\begin{align*}
		\tau: \,\, &\BA_{1} \otimes \BA'_{1} \rightarrow 	\BA_{3} \otimes \BA'_{3},\,\,\,
		\BA_{3} \otimes \BA'_{3} \rightarrow 	\BA_{1} \otimes \BA'_{1},\,\,\,
		\BA_{2} \otimes \BA'_{2} \rightarrow 	\BA_{2} \otimes \BA'_{2}. \\		
		&a_{1} \otimes a'_{1} \rightarrow a_{1} \otimes a'_{1},\,\,\,a_{3} \otimes a'_{3} \rightarrow a_{3} \otimes a'_{3},\,\,\, a_{2} \otimes a'_{2} \rightarrow -\overline{a_{2}} \otimes \overline{a'_{2}}.
	\end{align*}
The verification that this defines an action on the Lie algebra is just a computation. Then one easily checks that under this identification the induced $S_{3}$-action on $\mathfrak{sl}_{2} \times \mathfrak{sl}_{2} \times \mathfrak{sl}_{2}$ is generated by: 
\begin{equation}
\sigma(x,y,z)=(y,z,x) \,\,\text{and}\,\, \tau(x,y,z)=(z,y,x).  
\end{equation}
Note that for each composition algebra $\BA$, $V_{\BA}$ is the unique irreducible $\mathfrak{g}(\BA,\BH)$-module of dimension $dim(V_{\BA})$ up to isomorphisms. Thus the $S_{3}$-action on $\mathfrak{g}(\BA,\BH)$ induces a homomorphism $f: S_{3} \rightarrow PGL(V_{\BA})$ such that for any $g \in S_{3}$ and any representative $F_{g}$ of $f(g)$ in $GL(V_{\BA})$ we have:
\begin{equation}\label{S_{3}_equation}
 F_{g}(X.v)=(g.X).(F_{g}(v)),\,\, \text{for any} \,\, X \in \mathfrak{g}(\BA,\BH)\,\,\text{and}\,\, v \in V_{\BA}.
 \end{equation}
By Theorem \ref{dict_02} the highest weight of the action of $t(\BH_{\BC})$ on $V_{\BA}$ is $w_{1}+w_{2}+w_{3}$ which corresponds to its action on $U_{1} \otimes U_{2} \otimes U_{3}$. Thus from (2.6)(2.7) we deduce that the action of $S_{3}$ on $\BP V_{\BA}$ stabilizes the subspace generated by a maximal weight vector in $U_{1} \otimes U_{2} \otimes U_{3}$, and hence it stabilizes $\BP(U_{1} \otimes U_{2} \otimes U_{3})$. Furthermore from (2.6)(2.7) one can also easily see that the action of $S_{3}$ on $\BP(U_{1} \otimes U_{2} \otimes U_{3})$ is given by permutation of factors. \par 
\subsection{Projective geometry of subadjoint varieties}
  In this subsection, we fix a simple group $G$ of type $D_{4},F_{4},E_{6},E_{7}$ or $E_{8}$.  We consider the subadjoint variety $S \subseteq \mathbb{P}W$ associated to $G$.  We identify $W$ with $\CC_{J}$ where $J$ is the cubic Jordan algebra associated to $G$.  We denote $Sec(S)$ to be the secant variety of $S$ in $\mathbb{P}W$, and we denote $TS$ to be the tangential variety of $S$ in $\mathbb{P}W$. We also identify $\mathbb{P}W$ with its dual via the skew form $w$.  We summarize some projective properties of subadjoint varieties as follows.
\begin{thm}\cite[Section 9]{Cl}\label{proj_geom}
	(1). $Sec(S)=\mathbb{P}W$, $TS=\{det=0\}$, $Sing(TS)=\{\flat=0 \}$.\par 
	(2). For any point $x \in S$, the embedded tangent space $\widehat{T_{x}S}$ is a maximal isotropic subspace in $W$ with respect to the skew form $w$. For any point $y \in \mathbb{P}W$, denote by $H_{y}$ the hyperplane defined by $y$. Then $H_{y}$ is tangent at $x$ if and only if $y \in \mathbb{P} \widehat{T_{x}S}$. In other words $TS$ is identified to the dual variety $S^{*}$.\par 
	(3). We have the following special properties:\par 
	(3.1) Each point in $\mathbb{P}W \backslash TS$ lies on a unique secant line.\par 
	(3.2) For a point $\alpha=[A]$ in $TS \backslash Sing(TS)$, the point $x_{\alpha}=[A^{\flat}]$  lies in $S$, and $\alpha$  lies on a unique tangent line given by the join of $\alpha$ and $x_{\alpha}$.\par
	(3.3) Each point in  $Sing(TS)$ belongs to an infinite number of tangent lines.\par 
	(4). The action of $H$ on $\mathbb{P}W$ has exactly four orbits, namely $\mathbb{P}W \backslash TS$, $TS \backslash Sing(TS)$, $Sing(TS)\backslash S$, and $S$.
\end{thm}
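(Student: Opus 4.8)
The plan is to follow Clerc's argument \cite[Section~9]{Cl}, working throughout in the Zorn--matrix model $W\cong\CC_J$ equipped with the quartic form $\det$, the symplectic form $w$ and the cubic adjoint $\flat$, and exploiting the twisted--cubic parametrization $Q_J$ of \eqref{bir_map} together with the $\BG_{m}$--action \eqref{dis_c*}. The whole statement is organized around part~(4): once the $\overline{H}_J$--orbit decomposition of $\BP W$ is in hand, parts (1)--(3) come out of it plus a handful of explicit computations at the base point $x_0=Q_J([1:0])=[1:0:0:0]$ and the nondegeneracy of $w$. So the first and hardest step is (4); I expect this to be the real obstacle, since it is the only piece that genuinely needs Vinberg/Kac $\theta$--group theory rather than linear algebra and homogeneity.

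For (4): the pair $(\BC^{*}\times H,W)$ is a regular prehomogeneous vector space (classical; it is the one--variable shadow of the $\theta$--group $(\SL_2\times H,\BC^{2}\otimes W)$ underlying Theorem~\ref{mot_}) whose only relative invariant, up to scalars, is $\det$, of degree $4$; hence $H$ is transitive on each level set $\{\det=c\}$, $c\neq0$, and $\BP W\setminus\{\det=0\}$ is a single orbit. On the null cone one stratifies by the ``Zorn rank'' of $A$ --- recorded by the vanishing of $A$, of $A^{\flat}$, and by whether $A$ is of rank $1$ --- and checks that the three resulting locally closed pieces are exactly $\{\det=0\}\setminus\{\flat=0\}$, $\{\flat=0\}\setminus S$ and $S$. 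Transitivity on each is obtained by transporting Proposition~2.3, which gives transitivity of $NP(J)$ on the Jordan--rank strata of $J$: a general element of a stratum is first normalized in its ``$J$--slots'' by the Levi $\BG_{m}\cdot M_{J}$ of $\Stab_{x_0}(\overline{H}_J)$ (recall $M_{J}\twoheadrightarrow NP(J)^{o}$), and the residual scalar entries are then absorbed by the unipotent radical of $\Stab_{x_0}(\overline{H}_J)$. That $S$ is the rank--one locus, and closed, is its description as the highest--weight orbit $\overline{Q_J(\BP(\BC\oplus J))}$.

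Granting (4), parts (1) and (2) are quick. That $Sing\{\det=0\}=\{\flat=0\}$ is orbit--free: by Euler's identity the singular locus of the hypersurface is the common zero set of the partials $B\mapsto D(A,A,A,B)=w(A^{\flat},B)$, which vanishes identically in $B$ iff $A^{\flat}=0$, as $w$ is nondegenerate. Differentiating $q_J$ at $\alpha=0$ gives the embedded tangent space $\widehat{T_{x_0}S}=\{(a,b,0,0):a\in\BC,\ b\in J\}$, of dimension $1+\dim J=\tfrac12\dim\CC_J$, which by \eqref{w} is isotropic, hence Lagrangian, hence its own $w$--orthogonal; by homogeneity the same holds at every point of $S$. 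Thus $H_{y}$ is tangent to $S$ at $x$ iff $\widehat{T_xS}\subseteq H_y$ iff $y\in(\widehat{T_xS})^{\perp_w}=\widehat{T_xS}$, so the dual variety is $S^{*}=\bigcup_{x\in S}\BP\widehat{T_xS}=TS$. Moreover every $A\in\widehat{T_{x_0}S}$ has $\det A=0$ by the determinant formula, so $TS\subseteq\{\det=0\}$; and since a short computation gives $(0,b,0,0)^{\flat}=(N(b),0,0,0)$, which is nonzero for $N(b)\neq0$ while $[0:b:0:0]\in\widehat{T_{x_0}S}$, the $\overline{H}_J$--stable set $TS$ is not contained in $\{\flat=0\}$, so by the orbit chain $S\subseteq\{\flat=0\}\subseteq\{\det=0\}$ we conclude $TS=\{\det=0\}$. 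Finally $Sec(S)\supseteq TS=\{\det=0\}$ while $A=(1,0,0,1)$ is a secant point --- its two endpoints $[1:0:0:0]$ and $[0:0:0:1]$ lie in $S$ --- with $\det A=1\neq0$, so $Sec(S)=\BP W$.

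For (3) one reduces, via (4), to normal forms. In (3.1) the open--orbit representative $(1,0,0,1)$ has the secant decomposition $[1:0:0:0]+[0:0:0:1]$; uniqueness follows because that joining line meets $S$ in just those two points and because the rank--two Zorn--matrix decomposition is unique (Cayley's hyperdeterminant statement for $2\times2\times2$ tensors when $G=D_4$, and its Jordan--algebraic analogue otherwise). In (3.2), if $\det A=0$ and $A^{\flat}\neq0$ then $(A^{\flat})^{\flat}=-\det(A)^{2}A=0$ and the rank bookkeeping forces $\rank A^{\flat}=1$, i.e.\ $x_{\alpha}=[A^{\flat}]\in S$; a check on the normal form $A=(0,b,0,0)$ with $N(b)\neq0$, for which $A^{\flat}=(N(b),0,0,0)$ and hence $x_{\alpha}=x_0$ with $A\in\widehat{T_{x_0}S}$, shows the join of $[A]$ and $x_{\alpha}$ is tangent to $S$ at $x_{\alpha}$ and is the unique tangent line through $[A]$ (any tangent line through $[A]$ lies in some $\widehat{T_xS}$, forcing $x=x_{\alpha}$). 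In (3.3), for $A$ with $A^{\flat}=0$ of rank $2$, reducing to a normal form one exhibits a positive--dimensional family of rank--one $[P]$ with $[A]\in\BP\widehat{T_{[P]}S}$ --- e.g.\ the rank--one matrices in a fixed Lagrangian through $[A]$ --- giving infinitely many tangent lines.
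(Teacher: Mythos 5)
The paper does not actually prove Theorem \ref{proj_geom}: it is quoted from \cite[Section 9]{Cl}, so there is no in-paper argument to compare against. Your reconstruction follows the same route as the cited source — first the four-orbit statement (4), obtained from the prehomogeneous-space/relative-invariant picture together with the Jordan-rank transitivity of Proposition 2.3 transported through the parabolic $Stab_{x_{0}}(\overline{H}_{J})$, and then (1)--(3) by explicit computations with $\det$, $w$ and $\flat$ at the base point $x_{0}$ combined with $H$-homogeneity — and the computations you exhibit are correct: $\widehat{T_{x_{0}}S}=\{(a,b,0,0)\}$ is Lagrangian of dimension $1+\dim J$, $(0,b,0,0)^{\flat}=(N(b),0,0,0)$, $\det$ vanishes identically on $\widehat{T_{x_{0}}S}$, and $(1,0,0,1)$ is a secant point with $\det=1$, which together with $H$-stability gives $TS=\{\det=0\}$, $Sing(TS)=\{\flat=0\}$ and $Sec(S)=\BP W$ exactly as in \cite{Cl}. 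The only places that remain assertions rather than proofs are the transitivity on the two null-cone strata in (4) (the "absorb the residual entries by the unipotent radical" step), the uniqueness of the rank-one decomposition behind (3.1), and the positive-dimensional family of tangency points in (3.3); these are precisely the points where one would in practice still invoke \cite{Cl} or the rank theory of \cite{Kr}, and none of them is a step that would fail, so the plan is sound.
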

Next we review the geometry of lines on subadjoint varieties. For any two points $p,q$ on $S$, denote by $l(p,q)$ the minimal number of lines on $S$ connecting $p$ and $q$.
\begin{thm}\label{lines_S}
	For any $p,q \in S$, we have $l(p,q) \leqslant 3$. Moreover:\par 
	(i) Two pairs of points $(p,q)$ and $(p',q')$ on $S$ are conjugate under the $H$-action if and only if $l(p,q)=l(p',q')$.\par 
	(ii) For a pair of points $p=[A],q=[B]$ on $S$, then $l(p,q) \leqslant 2$ if and only if $p,q \in \mathbb{P} \widehat{T_{z}S}$ for some $z \in S$, and if and only if $w(A,B)=0$. For a point $z$ on $S$, $\mathbb{P}\widehat{T_{z}S} \cap S$ equals to the union of the lines on $S$ through $z$.\par 
	(iii) For any $x \in \mathbb{P}W$,
	$x \in \mathbb{P}W \backslash TS$ if and only if $x$ lies on a secant line $l_{pq}$ of $S$, such that $l(p,q)=3$; and $x \in TS\backslash Sing(TS)$ if and only if it lies on a tangent line $l_{p}$ of $S$ at some point $p \in S$, such that $[T_{p}l_{p}]$ is generic.
\end{thm}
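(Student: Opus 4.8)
The plan is to reduce everything to the cubic Jordan algebra model of Theorem \ref{dict_01}. By Remark \ref{top_G} it suffices to work with $\ov{H}_{J}$ acting on $\BP W=\BP\CC_{J}$ and on $S$, where $J$ is the cubic Jordan algebra attached to $G$. When $G$ is of type $D_{4}$ one has $S=\BP^{1}\times\BP^{1}\times\BP^{1}$ with the three rulings as lines, and the statements are elementary, so I set this case aside and assume from now on that $G$ is of type $F_{4},E_{6},E_{7}$ or $E_{8}$. The backbone is a dictionary identifying, for $\alpha\in J$, the number $l(x_{0},q_{J}(\alpha))$ with $\rank\alpha$, where $x_{0}=q_{J}(0)=[1:0:0:0]$; once this is available, homogeneity of $S$ reduces the bound $l(p,q)\le3$ and all of (i)--(iii) to statements about the base point $x_{0}$.

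To set up the dictionary, recall that $\Stab_{x_{0}}(H)$ is, up to isogeny, $V\rtimes(\Gm\times M_{J})$ with $M_{J}$ isogenous to $NP(J)^{o}$, and that by \eqref{dis_c*} the factor $\Gm$ acts on $q_{J}(\alpha)$ by $\alpha\mapsto t^{-2}\alpha$. Combining the transitivity of $NP(J)$ on the rank-$0,1,2$ loci and on rank-$3$ elements of a fixed norm with this rescaling, each rank stratum of $J$ is a single $\Stab_{x_{0}}(H)$-orbit; together with the corollary that $\Stab_{x_{0}}(H)$ has exactly three orbits on $\BP T_{x_{0}}S$ and with the conical shape of the action near $x_{0}$, I would deduce that $S$ decomposes into exactly four $\Stab_{x_{0}}(H)$-orbits $\{x_{0}\}=O_{0}\subseteq O_{1}\subseteq O_{2}\subseteq\ov{O_{3}}=S$, where $O_{k}\cap q_{J}(J)$ is the rank-$k$ locus and $S\setminus q_{J}(J)\subseteq O_{3}$. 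It then remains to evaluate $l(x_{0},-)$: if $\alpha^{\#}=0$, the line joining $x_{0}$ to $q_{J}(\alpha)$ is $\{[1:u\alpha:0:0]:u\in\BC\}\cup\{x_{0}\}\subseteq S$, so $l=1$ on the rank-$1$ stratum; if $\rank\alpha=2$, using $NP(J)$ I may take $\alpha$ a rank-$2$ idempotent $e$, and for a primitive subidempotent $e_{1}\le e$ the point $z=q_{J}(e_{1})$ is joined to $x_{0}$ and to $q_{J}(e)$ by lines on $S$ through $z$, so $l=2$; if $\rank\alpha=3$, normalising $\alpha=\mathbf{1}$ by $NP(J)$ and $\Gm$ and chaining through $q_{J}(e_{1}),q_{J}(e_{1}{+}e_{2}),q_{J}(\mathbf{1})$ for orthogonal primitive idempotents $e_{i}$ gives $l\le3$, while $l\ge3$ will follow from (ii). Since every $\alpha\in J$ has rank $\le3$ and the boundary lies in $O_{3}$, this gives $l(p,q)\le3$ everywhere, and since $l(x_{0},-)$ takes distinct constant values on the four orbits, part (i) follows by reducing ordered pairs to the position of the second point.

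Next I would record the two geometric inputs behind (ii) and (iii). The identity $\BP\wh{T_{z}S}\cap S=\bigcup(\text{lines on }S\text{ through }z)$ follows by taking $z=x_{0}$: then $\wh{T_{x_{0}}S}=\BP(\BC\oplus J\oplus0\oplus0)$, and $q_{J}(\alpha)$ lies in it exactly when $\alpha^{\#}=N(\alpha)=0$, i.e.\ when $\rank\alpha\le1$, which by the dictionary is the union of lines through $x_{0}$. For (ii): formula \eqref{w} gives $w((1,0,0,0),(a,b,c,d))=d$, and the cubic Jordan identity $N(x^{\#})=N(x)^{2}$ (with a check on the boundary) forces any point of $S$ with vanishing last Zorn coordinate to satisfy $N(c)=0$ as well; so on $S$ the condition $w(A,B)=0$ is equivalent to $d=0$, hence to $\rank\le2$, hence to $l(p,q)\le2$ by Step 1; conversely if $l(p,q)\le2$ a chain of at most two lines meets at a point $z$, so $p,q\in\BP\wh{T_{z}S}$, which is isotropic for $w$ by Theorem \ref{proj_geom}. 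For (iii) I would use $S\subseteq Sing(TS)$ (the nesting of orbit closures in Theorem \ref{proj_geom}) and the computation, valid for $[A]\in S$, that $\det\!\bigl(s\,(1,0,0,0)+tA\bigr)=st^{2}\bigl(sd^{2}+t(2dP+4N(c))\bigr)$ with $P=ad-\mathrm{Tr}(b,c)$, which shows that for $p,q\in S$ one has $\ov{pq}\subseteq TS$ iff $d=N(c)=0$ iff $l(p,q)\le2$. Then a point outside $TS$ lies on a unique secant line $\ov{pq}$, $p,q\in S$ (Theorem \ref{proj_geom}(3.1)), necessarily with $l(p,q)=3$; and if $l(p,q)=3$ then $\ov{pq}\not\subseteq TS$, so---$p,q$ being singular points of the quartic $TS$---the binary quartic $\det|_{\ov{pq}}$ is nonzero and vanishes doubly at $p$ and at $q$, whence $\ov{pq}\cap TS=\{p,q\}$ and every other point of $\ov{pq}$ lies outside $TS$. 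Finally, a point of $TS\setminus Sing(TS)$ lies on a unique tangent line, the join with $x_{\alpha}=[A^{\flat}]\in S$, tangent at $x_{\alpha}$ (Theorem \ref{proj_geom}(3.2)); conjugating $x_{\alpha}$ to $x_{0}$, the tangent lines at $x_{0}$ are the $\{[1:u\beta:0:0]\}$, and a short analysis of $\det$ and $\flat$ along such a line shows that its points off $x_{0}$ lie in $Sing(TS)$ precisely when $\rank\beta\le2$, i.e.\ precisely when $[\beta]$ is not in the open $\Stab_{x_{0}}(H)$-orbit of $\BP T_{x_{0}}S$---which is the genericity condition in (iii).

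The main obstacle, I expect, is establishing the dictionary $l(x_{0},-)=\rank$: matching the rank stratification of the cell $q_{J}(J)$ with the (four) $\Stab_{x_{0}}(H)$-orbits on $S$, checking that the boundary $S\setminus q_{J}(J)$ introduces no new stratum (so that the rank-$3$ locus really is the dense orbit), and producing the explicit two- and three-line chains that give $l=2$ on the rank-$2$ stratum and $l\le3$ on the rank-$3$ one. Once the dictionary holds, (ii) and (iii) reduce to bookkeeping against Theorem \ref{proj_geom}. A further point requiring care is the type-$D_{4}$ case of (i): there $H=\SL_{2}^{3}$ has more than four orbits on ordered pairs of points, so the assertion holds only after adjoining the triality $S_{3}$ of Corollary \ref{S3} (equivalently, it is verified directly on the Segre cube).
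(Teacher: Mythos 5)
The paper itself states Theorem \ref{lines_S} as background (in the spirit of \cite{Cl} and \cite{LM07}) and gives no proof, so there is no argument of the paper to measure you against; judged on its own terms, your Jordan-algebra strategy (distance from the base point $=$ rank on the big cell, then explicit line chains, then restriction of $\det$ to secant and tangent lines) is the natural one, and the local computations in your treatment of (ii) and (iii), as well as the chains giving $l\leqslant\rank$ and the observation that (i) needs the triality $S_{3}$ in type $D_{4}$, are essentially sound up to normalization constants.

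However, there is a genuine gap at the step you yourself identify as the backbone: the claim that $S\setminus q_{J}(J)\subseteq O_{3}$ is false, and your deduction of both the global bound $l(p,q)\leqslant 3$ and of part (i) rests on it. The big cell $q_{J}(J)=S\cap\{a\neq 0\}$ is an affine variety, so no line of $S$ through $x_{0}$ can be contained in it; concretely, for $\beta\in J$ of rank one the line $\{[s:t\beta:0:0]\}$ lies in $S$, and its point at infinity $[0:\beta:0:0]$ is a boundary point at distance $1$ from $x_{0}$ (and similar limits of the two-line chains give boundary points at distance $2$). Thus the boundary meets every distance stratum, and "the rank-$3$ locus is the dense orbit containing the boundary" cannot be the mechanism by which the boundary is absorbed. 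What is actually needed, and is missing, is the statement that every $\Stab_{x_{0}}(H)$-orbit on $S$ meets the big cell (equivalently, that the boundary points at distance $k$ are $\Stab_{x_{0}}(H)$-conjugate to rank-$k$ cell points); this is true, but it requires a separate argument -- e.g.\ that $\Stab_{x_{0}}(H)$ does \emph{not} preserve $q_{J}(J)$ and its unipotent radical moves the hyperplane section $S\cap\{a=0\}$ off itself, or a Bruhat-type cell count -- and without it your proofs of $l\leqslant 3$ on all of $S$, of (i), and of the boundary cases implicitly invoked in (ii) (e.g.\ the identity $\BP\wh{T_{x_{0}}S}\cap S=\bigcup(\text{lines through }x_{0})$, whose intersection also contains points with $a=0$) are incomplete. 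A milder point in the same direction: Proposition \ref{proj_geom}'s transitivity is stated for $NP(J)$, while the isotropy factor $M_{J}$ only covers $NP(J)^{o}$, so single-orbit statements for the rank strata under $\Stab_{x_{0}}(H)$ need a word about connected components as well.
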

As a first corollary we can finish the proof of Corollary 2.11. We first verify the following:
\begin{lem}
	For the natural embedding $(Lie(H_{D_{4}}),W_{D_{4}}) \subset (Lie(H_{G}),W_{G})$ given in Theorem \ref{dict_02}, the determinant form $D_{W_{G}}$ and the skew form $w_{W_{G}}$ on $W_{G}$  restrict to the associated forms $D_{W_{4}}$ and $w_{W_{D_{4}}}$  on $W_{D_{4}}$. 
\end{lem}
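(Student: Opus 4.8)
The plan is to exploit the explicit, "building block" nature of the constructions in Section 2.1. Recall from Theorem \ref{dict_02} that $W_{G} = V_{\BA} = \BA_{1}\otimes U_{1} \oplus \BA_{2}\otimes U_{2} \oplus \BA_{3}\otimes U_{3} \oplus U_{1}\otimes U_{2}\otimes U_{3}$, and that $W_{D_{4}} = U_{1}\otimes U_{2}\otimes U_{3}$ sits inside as the last summand; equivalently, in the Zorn-matrix model $\CC_{J}$ with $J = \CH_{3}(\BA)$, the subspace $\CC_{\CH_{3}(0)}$ of Example 2.8 is exactly the span of the eight distinguished vectors $e_{ijk}$. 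So the first step is to translate the statement into the Zorn-matrix picture: I want to check that the quadrilinear form $D$ of (2.1)--(2.2) and the skew form $w$ of \eqref{w} on $\CC_{\CH_{3}(\BA)}$, when both arguments are restricted to diagonal Zorn matrices (i.e. $b,c \in \CH_{3}(0) \subseteq \CH_{3}(\BA)$, meaning $b = \mathrm{diag}(b_{1},b_{2},b_{3})$, $c = \mathrm{diag}(c_{1},c_{2},c_{3})$), reduce to the formulas written out in Example 2.8 for $\CC_{\CH_{3}(0)}$.

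The key point making this routine is that the forms $D$ and $w$ are built entirely from the cubic norm $N$, the trace form $\Tr$, and the adjoint $\#$ on $J$, together with the determinant formula $\det(A) = (ad - \Tr(b,c))^{2} - 4\Tr(b^{\#},c^{\#}) + 4aN(c) + 4dN(b)$ and \eqref{w}. Hence the second step is: verify that on the subalgebra $\CH_{3}(0) \subseteq \CH_{3}(\BA)$ of diagonal matrices, the restrictions of $N$, $\Tr(\,,\,)$ and $\#$ coincide with the corresponding structures on $\CH_{3}(0)$ itself. This is immediate from the defining formulas for $N$ and $\#$ on $\CH_{3}(\BA)$ (cited from \cite[Example 05]{Kr}): for a diagonal matrix $\mathrm{diag}(\xi_{1},\xi_{2},\xi_{3})$ one has $N = \xi_{1}\xi_{2}\xi_{3}$ and $\mathrm{diag}(\xi_{1},\xi_{2},\xi_{3})^{\#} = \mathrm{diag}(\xi_{2}\xi_{3},\xi_{1}\xi_{3},\xi_{1}\xi_{2})$, and $\CH_{3}(0)$ is visibly closed under these operations with exactly the same values. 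Plugging these back into $\det(A)$ and \eqref{w} and matching term by term against the formulas of Example 2.8 gives the claim for $D = D_{W_{G}}$ and $w = w_{W_{G}}$ restricted to $\CC_{\CH_{3}(0)} = W_{D_{4}}$.

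The one genuine thing to check — and the only place where care is needed — is that the abstract embedding $(Lie(H_{D_{4}}), W_{D_{4}}) \subseteq (Lie(H_{G}), W_{G})$ supplied by Theorem \ref{dict_02} is, up to scalar, the \emph{same} as the inclusion $\CC_{\CH_{3}(0)} \hookrightarrow \CC_{\CH_{3}(\BA)}$ in the Zorn-matrix model; only then can I transport the computation above. For this I would use Theorem \ref{dict_02}(iii): a highest weight vector of $(\ft(\BH_{\BC}), U_{1}\otimes U_{2}\otimes U_{3})$ is a highest weight vector of $(\fg(\BA,\BH_{\BC}), V_{\BA})$. Since both $W_{D_{4}} \subseteq W_{G}$ and $\CC_{\CH_{3}(0)} \subseteq \CC_{\CH_{3}(\BA)}$ are $\ft(\BH_{\BC})$-subrepresentations isomorphic to $U_{1}\otimes U_{2}\otimes U_{3}$, each is the irreducible $\ft(\BH_{\BC})$-submodule of $W_{G}$ generated by a highest weight vector of $V_{\BA}$, hence the two subspaces coincide inside $W_{G}$; and the identifications of both with $U_{1}\otimes U_{2}\otimes U_{3}$ agree up to an overall scalar since the weight spaces are one-dimensional. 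The forms $D_{W_{D_{4}}}$ and $w_{W_{D_{4}}}$ are then (up to normalization, which one may fix by the standard choice) the unique $Lie(H_{D_{4}})$-invariant quadrilinear resp. skew forms on this space, so equality of the restrictions follows. The main obstacle is thus purely bookkeeping: pinning down the scalar/normalization so that "restrict to the associated forms" is literally an equality rather than a proportionality, which I would handle by evaluating both sides on one convenient pair of the basis vectors $e_{ijk}$ from Example 2.8.
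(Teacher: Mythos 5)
Your computational half is fine, and it is genuinely different from what the paper does: on the diagonal subalgebra $\CH_{3}(0)\subseteq\CH_{3}(\BA)$ the norm, adjoint and trace visibly restrict to those of $\CH_{3}(0)$, so formulas (2.1) and (2.2) show that $\det$ and $w$ of $\CC_{\CH_{3}(\BA)}$ restrict \emph{on the nose} to the forms of $\CC_{\CH_{3}(0)}$ written in Example 2.6. (The paper instead handles $w$ by quoting the remark after \cite[Theorem 4.1]{LM02}, and handles $D$ geometrically: via Theorem \ref{dict_02}(iii) one gets an equivariant embedding $S_{D_{4}}\subseteq S_{G}$, and then, using Theorem \ref{proj_geom} and Theorem \ref{lines_S} together with the $w$-compatibility, one shows $TS_{G}\cap \BP W_{D_{4}}=TS_{D_{4}}$ by exhibiting a point $[A+B]$ with $p=[A],q=[B]\in S_{D_{4}}$, $l(p,q)=3$, lying off $TS_{G}$.)

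The genuine gap is your bridging step, which is where the whole content of the lemma sits. You need the triality-model embedding $U_{1}\otimes U_{2}\otimes U_{3}\subseteq V_{\BA}$ of Theorem \ref{dict_02} to correspond, under the identification in Theorem \ref{dict_02}(i), to the diagonal Zorn subspace $\CC_{\CH_{3}(0)}\subseteq\CC_{\CH_{3}(\BA)}$. Your argument takes for granted that $\CC_{\CH_{3}(0)}$ is a $\ft(\BH_{\BC})$-subrepresentation of $\CC_{\CH_{3}(\BA)}$ isomorphic to $U_{1}\otimes U_{2}\otimes U_{3}$; but Theorem \ref{dict_02} only asserts an abstract isomorphism of pairs, and says nothing about where $\ft(\BH_{\BC})$ lands inside $Lie(\widetilde{H}_{\CH_{3}(\BA)})$ nor how it acts on Zorn matrices. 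Whether $\CC_{\CH_{3}(0)}$ is stable under the image of $\ft(\BH_{\BC})$, with the right module type, is precisely the dictionary you are trying to establish, so the uniqueness-of-the-isotypic-component argument is circular at that point. Theorem \ref{dict_02}(iii) does not bridge the two models either: it compares highest weight vectors inside the triality model only. To complete your route you would have to invoke the explicit isomorphism constructed in \cite{LM02}, \cite{Cl} or \cite{Kr} (matching $\BA_{i}\otimes U_{i}$ with the $i$-th off-diagonal slots of the two copies of $J$ and $U_{1}\otimes U_{2}\otimes U_{3}$ with $\BC\oplus\CH_{3}(0)\oplus\CH_{3}(0)\oplus\BC$), or replace this step by an invariance argument (multiplicity-one of the $U_{1}\otimes U_{2}\otimes U_{3}$-isotypic piece gives nondegeneracy of $w|_{W_{D_{4}}}$ and proportionality of the restricted quartic to the hyperdeterminant, but non-vanishing of the quartic and the normalization still require extra input). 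Note also that your proposed fix for the scalar normalization, evaluating on the basis vectors $e_{ijk}$, presupposes the same explicit identification that is missing.
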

\begin{proof}
	The fact that $w_{W_{G}}|_{W_{D_{4}}}=w_{W_{D_{4}}}$ follows from the remark following \cite[Theorem 4.1]{LM02}.
	Denote by $H^{sc}_{G}$ the simply-connected cover of $H_{G}$.  Then we have a natural embedding $SL_{2} \times SL_{2} \times SL_{2} \subseteq H^{sc}_{G}$. And by Theorem \ref{dict_02}(iii) there is an equivariant embedding of subadjoint varieties $(S_{D_{4}},\BP W_{D_{4}}) \subseteq (S_{G},\BP W_{G})$. Thus from Theorem \ref{proj_geom}(1) and (4), to show $D_{W_{G}}|_{W_{D_{4}}}=D_{W_{D_{4}}}$, it suffices to show that $TS_{G} \cap \BP W_{D_{4}}=TS_{D_{4}}$ or equivalently $TS_{G}$ does not contain $\BP W_{D_{4}}$. In fact take any two points $p=[A],q=[B]$ on $S_{D_{4}}$ with $l_{S_{D_{4}}}(p,q)=3$.
	Then from Theorem \ref{lines_S}(ii) we have $w_{W_{D_{4}}}(A,B)=w_{W_{G}}(A,B) \not=0$. Thus $l_{S_{G}}(p,q)=3$ and from Theorem \ref{lines_S}(iii) we have $[A + B] \in \BP W_{D_{4}} \backslash TS_{G}$.
\end{proof}
\begin{proof}[Proof of Corollary 2.11]
	It suffices to verify that the image of our morphism $f: S_{3} \rightarrow PGL(V_{\BA})$ lies in $\overline{H}_{\CH_{3}(\BA)}$. 
	Recall that for any $g \in S_{3}$, we have shown that $f(g)$ stabilizes the weight subspace $V_{w_{1}+w_{2}+w_{3}}$  of weight $w_{1}+w_{2}+w_{3}$. Thus we can choose a representative $F_{g} \in GL(V_{\BA})$ of $f(g)$  fixing  $V_{w_{1}+w_{2}+w_{3}}$. Now we verify our claim in three steps.\par 
	We first prove that $F_{g}$ fixes the determinant form $D_{W_{G}}$. Note that the action of $S_{3}$ on $\mathfrak{g}(\BA,\BH)$ naturally induces an $S_{3}$-action on $H^{sc}_{G}$. And from (2.7) we have:
	\begin{equation}\label{S_{3}_equation}
		F_{g}(h.v)=(g.h).(F_{g}(v)),\,\, \text{for any} \,\, h \in H^{sc}_{G}\,\,\text{and}\,\, v \in V_{\BA}.
	\end{equation}
In particular this implies that $F_{g}$ stabilizes the subadjoint variety $S_{G}=H^{sc}_{G}.[V_{w_{1}+w_{2}+w_{3}}]$. Thus it stabilizes $T S_{G}$ and hence  by Theorem 2.12 it acts on $D$ by a scalar. On the other hand note that $F_{g}$ acts on $U_{1} \otimes U_{2} \otimes U_{3}$ by a certain permutation of factors.   From Example 2.6 one easily checks that it preserves the determinant form on $W_{D_{4}}$. Then by the above lemma we conclude that $F_{g}$ preserves the form $D_{W_{G}}$. \par 
	Next we check that it preserves the skew form $w_{W_{G}}$. As $w_{W_{G}}$ is a $H_{G}^{sc}$-invariant form,  $F_{g}(w_{W_{G}})$ is also an $H_{G}^{sc}$-invariant form by (2.8). Then from Theorem \ref{lines_S}(i) and by the above lemma it suffices to prove that $F_{g}$ preserves the skew form on $W_{D_{4}}$, which again can be easily checked from Example 2.6.\par
	Now if $G$ is not of type $E_{6}$, then $\widetilde{H}_{J}$ is already connected by \cite[Proposition 16.(ii)]{Kr} and thus $f(g) \in \overline{H}_{\CH_{3}(\BA)}$.  If $G$ is of type $E_{6}$, then the corresponding representation is $(H^{sc}_{G},W_{G})\cong (SL_{6},\wedge^{3}\BC^{6})$ and one can easily check that the natural embedding is given through identifying $\BC^{6}=U_{1}\oplus U_{2} \oplus U_{3}$. Then as $F_{g}$ acts on $U_{1} \otimes U_{2} \otimes U_{3}$ by a certain permutation, one deduces that the action of $F_{g}$ on $\wedge^{3}\BC^{6}$ comes from an element in $SL_{6}$ and hence $f(g) \in \overline{H}_{\CH_{3}(\BA)}$. 
\end{proof}
We can now prove Theorem \ref{3_lines}(1). 
\begin{proof}[Proof of Theorem \ref{3_lines}(1)]
	The $H$-invariant section $\sigma_{12}$ comes from the determinant form $D$. More precisely we consider the following $SL_{2} \times H$-invariant function:
	\begin{equation}
		W \times W \rightarrow \mathbb{C}:\,\,\, (\alpha,\beta) \rightarrow disc_{a,b}(det(a\alpha+b\beta)),
	\end{equation}
where for a fixed pair $(\alpha, \beta) \in 	W \times W$, $disc_{a,b}(det(a\alpha+b\beta))$ is the discriminant of the quartic binary form $det(a\alpha +b\beta)$ over $a,b$. Note that $det$ is $H$-invariant and the discriminant of a binary quartic form is known as an $SL_{2}$-invariant of degree 12. Thus the above function is an $SL_{2} \times H$-invariant of degree 24. It defines a $H$-invariant section $\sigma_{12} \in H^{0}(Gr(2,W),\CO_{Gr(2,W)}(12))$ such that for any plane $[U] \in Gr(2,W)$, $\sigma_{12}([U]) \not =0$ if and only if $[U]$ is nondegenerate.\par 
Given a plane $[U] \in Gr(2,W)$, for any point $x \in TS \cap \BP U$, we denote $S_{x}=\{p \in S:  x \in \BP \widehat{T_{p}S}\}$.
Then by Theorem 2.12(2): 
\begin{align*}
Sing(S_{U})=\{p \in S_{U}:\,\, \text{there exists some}\,\, x \in \BP U\,\, \text{such that}\,\, H_{x}\,\, \text{is tangent at}\,\, p\}
=(\bigcup_{x \in \BP U \cap TS} S_{x}) \cap S_{U}.
\end{align*}
If $[U]$ is nondegenerate, then for any point $\alpha=[A] \in  \BP U$, we claim that $w(A^{\flat},-)$ is not identically zero on $U$. Assume otherwise, then for any point $\beta=[B] \in \BP U$, we have $w(A^{\flat},B)=D(A,A,A,B)=0$. But then as $TS=\{det=0\}=\{[B]: D(B,B,B,B)=0\}$, we conclude that $TS$ intersects $\BP U$ at $\alpha=[A]$ of order at least two, which is a contradiction.
 Now assume $S_{U}$ is singular, then there exists a point $\alpha=[A] \in TS \cap \BP U$ such that $S_{\alpha} \cap S_{U} \not= \varnothing$. By the above discussion we have $A^{b} \not=0$, and hence by Theorem \ref{proj_geom}(3.2) $S_{\alpha} \cap S_{U}=[A^{\flat}]$. But then from $[A^{\flat}] \in S_{U}$, we conclude that $w(A^{\flat},-)$ is identically zero on $U$, which is a contradiction. Conversely assume $S_{U}$ is nonsingular, then by the above discussion there will be no roots of $det|_{\BP U}$ lying in $TS \backslash Sing(TS)$ and of order at least two. Thus to show $[U]$ is nondegenerate, it suffices to show that $\BP U \cap Sing(TS)= \varnothing$. Assume otherwise, then by Theorem 2.12(3.3) we have $dim(S_{x}) > 0$ for any $x \in Sing(TS) \cap \BP U$. Then as $S_{x} \cap S_{U}=S_{x} \cap H_{y}$ for any $y \in \BP U $ distinct from $x$, we conclude that it is non-empty and this yields a contradiction.
\end{proof}
The main application of Corollary 2.11 is the following:
\begin{cor}\label{section_}
	The  embedding $(Lie(H_{D_{4}}),W_{D_{4}}) \subset (Lie(H_{G}),W_{G})$ given in Theorem \ref{dict_02} induces a morphism between GIT quotients: $$\sigma_{G}: M=\CU_{D_{4}}//(H_{D_{4}} \times S_{3}) \rightarrow M_{G}=\CU_{G}//H_{G}$$.  
\end{cor}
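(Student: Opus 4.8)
The plan is to produce $\sigma_{G}$ by descending the tautological inclusion of Grassmannians along the GIT quotient maps. Let $\iota\colon Gr(2,W_{D_{4}})\hookrightarrow Gr(2,W_{G})$ be the closed immersion induced by the inclusion of representations $W_{D_{4}}\subseteq W_{G}$ of Theorem \ref{dict_02}, which sends a $2$-plane $U\subseteq W_{D_{4}}$ to the same subspace regarded inside $W_{G}$. The first point is that $\iota$ carries $\CU_{D_{4}}$ into $\CU_{G}$. By the Lemma above, which records that the quartic form $D$ (hence the form $\det$) on $W_{G}$ restricts to the corresponding form on $W_{D_{4}}$, for any $U\subseteq W_{D_{4}}$ the binary quartic $\det(a\alpha+b\beta)$ in a basis $\alpha,\beta$ of $U$, and hence its discriminant, is the same whether computed in $W_{D_{4}}$ or in $W_{G}$. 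Consequently, by the construction of $\sigma_{12}$ in the proof of Theorem \ref{3_lines}(1), the section $\sigma_{12}$ for $W_{G}$ pulls back under $\iota$ to the section $\sigma_{12}$ for $W_{D_{4}}$, so $\iota^{-1}(\CU_{G})=\CU_{D_{4}}$; equivalently, by Theorem \ref{3_lines}(1), a plane of $W_{D_{4}}$ is nondegenerate in $W_{D_{4}}$ if and only if it is nondegenerate in $W_{G}$.

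Next I would record the equivariance. Under Theorem \ref{dict_02} we have $H_{D_{4}}=SL_{2}\times SL_{2}\times SL_{2}\subseteq H^{sc}_{G}$ acting on $W_{G}$, and this action restricts on $W_{D_{4}}=U_{1}\otimes U_{2}\otimes U_{3}$ to the standard $H_{D_{4}}$-action, so $\iota$ is $H_{D_{4}}$-equivariant. By Corollary \ref{S3} there is a copy of $S_{3}$ inside $\overline{H}_{G}$ whose action on $\BP W_{D_{4}}\subseteq\BP W_{G}$ is precisely the permutation of the three tensor factors — that is, the $S_{3}$-action used to define $M$ — and which normalizes the image of $H_{D_{4}}$ by permuting its three $SL_{2}$-factors. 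Since by Remark \ref{top_G} the $H_{G}$-action on $Gr(2,W_{G})$ factors through $\overline{H}_{G}$, the composite $\CU_{D_{4}}\xrightarrow{\,\iota\,}\CU_{G}\to M_{G}=\CU_{G}//H_{G}$ is constant on $H_{D_{4}}$-orbits (the target $M_{G}$ being $H_{G}$-, hence $\overline{H}_{G}$-, invariant) and on $S_{3}$-orbits (as $S_{3}\subseteq\overline{H}_{G}$), hence constant on the orbits of the group $H_{D_{4}}\rtimes S_{3}$ that these generate.

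Finally I would invoke the universal property of the GIT quotient. The open set $\CU_{D_{4}}=\{\sigma_{12}\neq 0\}$ is $H_{D_{4}}$-saturated in $Gr(2,W_{D_{4}})^{ss}$ — indeed it is exactly the stable locus, by Proposition \ref{D_{4}}(1) — so $\CU_{D_{4}}\to M_{D_{4}}=\CU_{D_{4}}//H_{D_{4}}$ is a good (in fact geometric) quotient, and then $M=M_{D_{4}}//S_{3}$ presents $\CU_{D_{4}}\to M$ as a good quotient by $H_{D_{4}}\rtimes S_{3}$; in particular $\CU_{D_{4}}\to M$ is a categorical quotient. Since the composite above is $(H_{D_{4}}\rtimes S_{3})$-invariant, it factors uniquely through $M$, which yields the desired morphism $\sigma_{G}\colon M\to M_{G}$.

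I do not anticipate a genuine obstacle: the three substantive inputs — invariance of nondegeneracy under $W_{D_{4}}\subseteq W_{G}$, the realization of the permutation $S_{3}$ inside $\overline{H}_{G}$ (Corollary \ref{S3}), and the fact that $H_{G}$ acts on $Gr(2,W_{G})$ through $\overline{H}_{G}$ (Remark \ref{top_G}) — are all already available, and the descent is then purely formal. The point that most deserves care is the bookkeeping of groups: one must check that the $SL_{2}\times SL_{2}\times SL_{2}$ inside $H^{sc}_{G}$ and the copy of $S_{3}$ from Corollary \ref{S3} genuinely assemble into the semidirect product $H_{D_{4}}\rtimes S_{3}$ acting on $Gr(2,W_{D_{4}})$ in the way used to define $M=M_{D_{4}}//S_{3}$ (with $S_{3}$ permuting the three $SL_{2}$-factors), after which the factorization is immediate.
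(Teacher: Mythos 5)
Your proposal is correct and follows essentially the same route as the paper: the inclusion $\CU_{D_4}\subseteq\CU_G$ via Lemma 2.14, the copy of $S_3$ inside $\overline{H}_{\CH_3(\BA)}$ from Corollary \ref{S3}, and Remark \ref{top_G} to reduce to the $\overline{H}_J$-action, followed by descent along the quotient. The only difference is that you spell out the final formal step (factoring the invariant morphism through the categorical quotient $\CU_{D_4}\to M$), which the paper simply asserts.
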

\begin{proof}
By Lemma 2.14, the embedding induces an inclusion $\CU_{D_{4}} \subseteq \CU_{G}$. And from Corollary 2.11 there exists a group homomorphism:
$S_{3} \times H^{sc}_{D_{4}} \rightarrow \overline{H}_{\CH_{3}(\BA)}$ such that the inclusion is equivariant under the associated group actions. Then following Remark \ref{top_G} we conclude that it induces a morphism between GIT quotients.
\end{proof}
\subsection{Bhargava-Ho's correspondence}\label{cr_det}
In this subsection we first review the Bhargava-Ho's correspondence in Theorem \ref{cr}. Then we apply it to calculate the GIT stable locus of the action of $H_{D_{4}}=SL_{2} \times SL_{2} \times SL_{2}$ on $Gr(2,W_{D_{4}})=Gr(2,\BC^{2} \otimes \BC^{2} \otimes \BC^{2})$. We also fix a simple group $G$ of type $D_{4},F_{4},E_{6},E_{7}$ or $E_{8}$, and denote by $J,W,H$ the associated objects to $G$.\par 
\begin{defn}
	Given a nondegenerate plane $U \subseteq W$, there is a natural diagram of morphisms by Theorem \ref{proj_geom} (3.1)(3.2):\par 
\begin{equation}\label{diag1}
	\begin{tikzcd}
		C_{U} \arrow[r,"\pi^{*}\psi_{U}"] \arrow[d,"\pi_{U}"]
		& \mathcal{T} \arrow[d,"\pi"] \arrow[r,"v"] & S\\
		\mathbb{P}U \arrow[r,"\psi_{U}"]
		& Hilb_{2}(S)
	\end{tikzcd}    
\end{equation}
where $\pi$ is the universal family of subschemes of length two on $S$, and $v$ is the evaluation map to $S$. And the morphism $\psi_{U}$ is defined as follows.\par 
$\bullet$ For each point $x \in \mathbb{P}U \backslash TS$, $\psi_{U}(x)\dot =x_{p}+x_{q}$ where $l_{x_{p}x_{q}}$ is the unique secant line through $x$.\par 
$\bullet$ For each point $y \in \mathbb{P}U \cap TS$, denote by $y_{r}$ the point on $S$ such that $l_{y_{r},y}$ is the unique tangent line on $S$ through $y$. Then $\psi_{U}(y)$ is defined by the tangent direction $[T_{y_{r}}(l_{y_{r},y})]$. \par 
Finally $\pi_{U} \dot =\psi_{U}^{*} \pi $. Then the datum defined in Theorem \ref{cr} is given by the curve $C_{U}$, the morphism  $\kappa_{U}=v \circ \pi^{*} \psi_{U}$, and the line bundle $L_{U}=\pi_{U}^{*}\CO_{\BP U}(1)$.
\end{defn}
If $G$ is of type $D_{4}$ then $H=SL_{2} \times SL_{2} \times SL_{2}$, and $S=\BP^{1} \times \BP^{1} \times \BP^{1}$. The morphism $\kappa_{U}$ from $C_{U}$ to $S$ is given by three degree two maps to $\BP^{1}$, and hence given by three degree two line bundles on $C_{U}$. Now we can verify Corollary \ref{D_{4}}(1).
\begin{proof}[Proof of Proposition \ref{D_{4}}(1)]
	We first show that the action of $H_{D_{4}}$ on $\CU_{D_{4}}$ is GIT stable. By Theorem 1.7(1) it is already semi-stable, thus it suffices to show that the dimension of orbits in $\CU_{D_{4}}$  remain invariant. In fact we will show that $dim(Stab_{H_{D_{4}}}([U]))=0$ for any $[U] \in \CU_{D_{4}}$. Take any plane $U$, denote by $(C_{U},L_{U},L_{1},L_{2},L_{3})$ the associated datum. Then by Proposition \ref{D_{4}'} we obtain an injective group homomorphism:
	\begin{equation*}
		Stab_{H_{D_{4}}}([U]) \rightarrow Aut(C_{U},L_{U},L_{1},L_{2},L_{3}).
	\end{equation*}
As $Aut(C_{U},L_{U})$ is finite, it then suffices to check that the subgroup whose elements fix $(C_{U},L_{U})$ is finite. From the construction above this subgroup equals $\{h \in H_{D_{4}}=SL_{2} \times SL_{2} \times SL_{2}: h.\kappa_{U}=\kappa_{U}\}=\{\pm1\} \times \{\pm1\} \times \{\pm1\}$, concluding the proof of our claim. \par 
Conversely we show that any plane $[U]$ lying ouside $\CU_{D_{4}}$ is unstable. From the proof of Theorem \ref{3_lines}(1), there exists a point $x=[A] \in \BP U \cap TS_{S_{4}}$ such that $S_{x} \cap S_{U} \not= \varnothing$. Whence there is a point $z=[\alpha] \in S$, such that $w(\alpha,-)$ is identically zero on $U$ and $x \in \BP \widehat{T_{z}S}$. Now under the birational map (2.3) we can assume $z=Q_{J}([1:0])$ and thus $A=(a,b,0,0) \in \CC_{J}$. Take any point $y=[B] \in \BP U$ different from $x$, we can write $B=(a',b',c',0) \in \CC_{J}$ by formula (2.2). Then apply the $\BG_{m}$-action defined in (2.5), one checks that $\lim_{t \rightarrow 0}t[U] =\lim_{t \rightarrow 0}t[A \wedge B]$ exists hence $[U]$ is GIT unstable by the Hilbert-Mumford criterion \cite[Theorem 2.1]{MFK94}.
\end{proof}
\section{Three line bundles of degree two}
\subsection{From $(C,L,\kappa)$ to three line bundles of degree two on $C$} In this subsection we  are going to prove Theorem \ref{3_lines}.(2). 
Let $C$ be a smooth projective curve of genus one. To obtain a multiset of line bundles from a given morphism from $C$ to $S$, we first identify a set of degree two line bundles on $C$ by the effective divisors it defines on the smooth surface $Hilb_{2}(C)$ as follows.
	 Denote by $\Sigma: C \times C \rightarrow Hilb_{2}(C)$ the quotient map of the $S_{2}$-action on $C \times C$. Let $Pic(C)$ be the Picard scheme of $C$ and denote by $Jac^{2}(C)$ the component parametrizing line bundles of degree two on $C$. As $C$ is of genus one, $Jac^{2}(C)$ is also a smooth projective curve of genus one. Denote by $\CL^{[2]}$ the universal line bundle of degree two on the family $\pi_{2}:Jac^{2}(C) \times C \rightarrow Jac^{2}(C)$. Then $\CP^{[2]} := \BP((\pi_{2})_{*}(\CL^{[2]}))$ parametrizes the effective divisors of degree two on $C$, i.e., $\CP^{[2]} \cong Hilb_{2}(C)$. The $\BP^{1}$-bundle $\CP^{[2]}/Jac^{2}(C)$ induces the following natural morphism:
	 \begin{align*}
	 	\rho: Jac^{2}(C) &\rightarrow Div_{Hilb_{2}(C)},\\
	 	L &\rightarrow  |L|,
	 \end{align*}
 where for $L \in Jac^{2}(C)$, $|L| \cong \BP^{1}$ is its complete linear system,  and we identify an effective divisor in $|L|$ with a point in $Hilb_{2}(C)$.  Recall that for a variety $X$, the sum of Cartier effective divisors induces a natural addtion map on $Div_{X}$. Thus for any $m \geqslant 1$ we can define a morphism $\rho_{m}$ by:
 \begin{align*}
	\rho_{m}: Hilb_{m}(Jac^{2}(C) )&\rightarrow Div_{Hilb_{2}(C)},\\ \sum_{i=1}^{m} L_{i} &\rightarrow \sum_{i=1}^{m} |L_{i}|,
	\end{align*}
where we identify $Hilb_{m}(Jac^{2}(C))$ with the $m$-th symmetric product of $Jac^{2}(C)$ via the Hilbert-Chow morphism.
	\begin{lem}
		For any $m \geqslant 1$, $\rho_{m}$ is a closed embedding.
	\end{lem}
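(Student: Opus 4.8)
The plan is to factor $\rho_m$ through a study of the single map $\rho$ and then handle the sum. First I would observe that $\rho$ itself is a closed embedding: it is a morphism from the projective curve $\mathrm{Jac}^2(C)$ to the projective scheme $\mathrm{Div}_{\mathrm{Hilb}_2(C)}$, so by properness it suffices to check that $\rho$ is injective on points and on tangent vectors. Injectivity on points is the statement that a degree-two line bundle $L$ on a genus-one curve is recovered from its complete linear system $|L|\subseteq\mathrm{Hilb}_2(C)$, which holds because $|L|$ (as a $\mathbb{P}^1$ of effective divisors) determines $L=\mathcal{O}_C(D)$ for any $D\in|L|$; two distinct $L,L'$ have disjoint pencils $|L|,|L'|$ of divisors (if they shared an effective divisor $D$ then $L\cong\mathcal{O}_C(D)\cong L'$). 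Injectivity on tangent spaces is a deformation-theoretic computation: a first-order deformation of $L$ gives a nontrivial first-order deformation of the pencil $|L|$ inside $\mathrm{Hilb}_2(C)$ because $H^1(C,L)=0$ for $\deg L=2$ on a genus-one curve, so the pencils move in a $1$-parameter family covering $\mathrm{Hilb}_2(C)$ with no infinitesimal stabilizer.

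Next I would pass from $\rho$ to $\rho_m$. The key structural fact is that the pencils $\{|L|:L\in\mathrm{Jac}^2(C)\}$ sweep out $\mathrm{Hilb}_2(C)$ in a very controlled way: through a \emph{general} point $[D]\in\mathrm{Hilb}_2(C)$ (i.e.\ $D=p+q$ with $p\ne q$ and $\mathcal{O}_C(p+q)$ not one of finitely many special bundles) there passes exactly one pencil $|L|$, namely $L=\mathcal{O}_C(p+q)$. Hence from a divisor $E=\sum_{i=1}^m|L_i|$ in the image of $\rho_m$ one can read off, at a general point of each irreducible component of $E$, which pencil it lies on, and therefore recover the multiset $\{L_1,\dots,L_m\}$, i.e.\ the point $\sum L_i\in\mathrm{Hilb}_m(\mathrm{Jac}^2(C))$. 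This gives set-theoretic injectivity of $\rho_m$. For the scheme-theoretic / tangent-space statement I would argue that $\rho_m$ is, away from the big diagonal, locally the $m$-fold "sum" of the embedding $\rho$, and use the fact that the divisor classes $[|L|]\in\mathrm{Pic}(\mathrm{Hilb}_2(C))$ vary linearly in $L$ (via the theta-divisor description of $\mathrm{Pic}(\mathrm{Hilb}_2(C))\cong\mathrm{Pic}(\mathrm{Jac}^2(C))\oplus\mathbb{Z}$ or directly from the $\mathbb{P}^1$-bundle $\mathcal{P}^{[2]}\to\mathrm{Jac}^2(C)$): distinct pencils give linearly independent directions, so no infinitesimal collision of the $|L_i|$ can be hidden. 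Finally, injectivity along the diagonal (when some $L_i$ coincide) follows because $\rho$ is an embedding and the multiplicities of the components of $E=\sum|L_i|$ are exactly the multiplicities in the multiset $\{L_i\}$, the pencils $|L_i|$ for distinct $L_i$ being reduced and pairwise sharing no common component.

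I would then package this: $\rho_m$ is proper (source is projective), injective on $\mathbb{C}$-points, and injective on Zariski tangent spaces, hence a closed immersion by the standard criterion (EGA, or Hartshorne III/IV: a proper monomorphism is a closed immersion; injectivity on points plus on tangent spaces gives a monomorphism since everything is of finite type over $\mathbb{C}$ and the target is separated). The main obstacle I anticipate is the tangent-space injectivity at points of the diagonal in $\mathrm{Hilb}_m(\mathrm{Jac}^2(C))$ and over non-general points of $\mathrm{Hilb}_2(C)$ — that is, controlling the differential of "$\mathrm{sum\ of\ pencils}$" where two of the pencils $|L_i|$, $|L_j|$ meet; resolving this cleanly will likely require either an explicit local computation using the $\mathbb{P}^1$-bundle $\mathcal{P}^{[2]}/\mathrm{Jac}^2(C)$ and the identification $\mathcal{P}^{[2]}\cong\mathrm{Hilb}_2(C)$, or a direct argument that the map $\mathrm{Jac}^2(C)\to\mathrm{Pic}(\mathrm{Hilb}_2(C))$, $L\mapsto[|L|]$, is injective with everywhere-injective differential, so that the $m$ summands impose independent conditions.
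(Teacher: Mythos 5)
Your overall strategy (properness plus injectivity on points and on tangent spaces, then invoking that a proper monomorphism is a closed immersion) is legitimate and close in spirit to the paper's, but as written there is a genuine gap exactly at the point you flag yourself: injectivity of the differential of $\rho_m$ at multisets with repeated bundles, i.e.\ at non-reduced image divisors such as $m|L_1|$. Recovering the multiset from the multiplicities of the components of $E$ only re-proves set-theoretic injectivity there; it says nothing about tangent vectors. Moreover, the remedy you lean on --- reading off ``independent conditions'' from the classes $[|L|]\in \mathrm{Pic}(\mathrm{Hilb}_2(C))$ --- cannot work, even away from the diagonal. All pencils $|L|$ are fibres of the Abel--Jacobi $\mathbb{P}^1$-bundle $AJ:\mathrm{Hilb}_2(C)\to \mathrm{Jac}^2(C)$, so $\mathcal{O}\bigl(\textstyle\sum_i|L_i|\bigr)=AJ^*\mathcal{O}_{\mathrm{Jac}^2(C)}\bigl(\textstyle\sum_i[L_i]\bigr)$, and the composite $\mathrm{Hilb}_m(\mathrm{Jac}^2(C))\to \mathrm{Pic}(\mathrm{Hilb}_2(C))$ factors through the Abel--Jacobi map of the genus-one curve $\mathrm{Jac}^2(C)$, whose fibres are the linear systems $\mathbb{P}^{m-1}$; its differential therefore has at least an $(m-1)$-dimensional kernel at every point once $m\geqslant 2$. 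So divisor classes are blind to precisely the directions you need to control, and the tangent-space statement has to be proved on the divisor scheme itself, i.e.\ on sections of the normal sheaf of the (possibly non-reduced) curve $\sum_k a_k|L_k|$.

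That is what the paper does, by a route that avoids computing the differential at all: it shows that every image point is a smooth point of $\mathrm{Div}_{\mathrm{Hilb}_2(C)}$ of dimension $m$, via $h^0(N_{\rho_m(R)/\mathrm{Hilb}_2(C)})=m$ and $h^1=0$; the computation reduces to $R=mL_1$, where $N\cong \mathcal{O}_{m|L_1|}(m|L_1|)$, and is carried out by induction on $m$ using $|L_1|\cdot|L_1|=0$ (computed on $C\times C$) together with Riemann--Roch; combined with injectivity and properness this gives an isomorphism onto the image. If you want to keep your monomorphism strategy, the clean repair is to observe that $\rho_m$ is nothing but pullback of effective divisors along the $\mathbb{P}^1$-bundle $AJ$, restricted to degree-$m$ divisors on $\mathrm{Jac}^2(C)$: since $AJ$ admits a section $s$, one has $(s\times\mathrm{id}_T)^{-1}\bigl(AJ_T^{-1}(Z)\bigr)=Z$ for any $T$-family $Z$, so $Z\mapsto AJ_T^{-1}(Z)$ is injective on $T$-points, and $\rho_m$ is then a proper monomorphism, hence a closed immersion. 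Some such argument (or the paper's cohomological one) must be supplied; as it stands the diagonal case is not proved, and the $\mathrm{Pic}$-based shortcut is a dead end.
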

	\begin{proof}
		First by definition $\rho_{m}$ is injective. It is proper as $Hilb_{m}(Jac^{2}(C))$ is proper and $Div_{Hilb_{2}(C)}$ is separated.  It then suffices to show that every point in the image is nonsingular of dimension $m$. In fact for any $R \in Hilb_{m}(Jac^{2}(C) )$, we claim that $h^{0}(N_{\rho_{m}(R)/Hilb_{2}(C)})=m$, and $h^{1}(N_{\rho_{m}(R)/Hilb_{2}(C)})=0$. 
		 Given $R=\sum_{k=1}^{r}a_{k}L_{k}$ with $\sum_{k=1}^{r}a_{k}=m$, and $L_{i} \not= L_{j}$ for $i \not= j$. Then  $h^{i}(N_{\rho_{m}(R)/Hilb_{2}(C)})=\sum_{k=1}^{r} h^{i}(N_{\rho_{a_{k}}(a_{k}L_{k})/Hilb_{2}(C)})$. Thus it suffices to prove the claim for the case where $R=mL_{1}$ for some $L_{1} \in Jac^{2}(C)$. In this case $N_{\rho_{m}(R)/Hilb_{2}(C)} \cong \CO_{m|L_{1}|}(m|L_{1}|)$. We first prove by induction of $m \geqslant 1$ that $h^{1}(m|L_{1}|,\CO_{m|L_{1}|}(k|L_{1}|))=0$, for any integer $k$. If $m=1$, then $|L_{1}|$ is a smooth rational curve, and $C_{1}\doteq  \Sigma^{*}(|L_{1}|)$ is a smooth curve of genus one on $C \times C$, intersecting properly with the diagonal $\Delta_{C}$ at four points. Then $|L_{1}|\cdot |L_{1}|=C_{1}\cdot C_{1}/2=(K_{C \times C}+C_{1})\cdot C_{1}/2=deg(K_{C_{1}})/2=0$. Thus $h^{1}(|L_{1}|,\CO_{|L_{1}|}(k|L_{1}|))=h^{1}(|L_{1}|,\CO_{|L_{1}|})=0$, for any $k$. Now assume that it is true for some integer $m \geqslant 1$, then we consider the  following exact sequence:
		 \begin{equation*}
		 	0 \rightarrow \CO_{|L_{1}|}(-(m-1)|L_{1}|)  \rightarrow       \CO_{m|L_{1}|} \rightarrow \CO_{(m-1)|L_{1}|} \rightarrow 0.
		 \end{equation*}
	  Tensor it with  $\CO_{m|L_{1}|}(k|L_{1}|)$, then by assumption and  by the case $m=1$ we conclude the induction. Finally by the Riemann-Roch Theorem, for any $m \geqslant 1$:
	  \begin{align*} h^{0}(m|L_{1}|,\CO_{m|L_{1}|}(m|L_{1}|))&=\chi(m|L_{1}|,\CO_{m|L_{1}|}(m|L_{1}|))\\
	  	&=\chi(Hilb^{2}(C \times C),|mL_{1}|)-\chi(Hilb^{2}(C \times C),\CO_{Hilb^{2}(C \times C)})\\
	  	&=m|L_{1}|.(m|L_{1}|-K_{Hilb_{2}(C \times C)})/2\\
	  	&=-|mL_{1}|.K_{Hilb^{2}(C \times C)}/2=-mC_{1}.(K_{C \times C}-\Delta_{C})/4=m.
	  	\end{align*}
		\end{proof}
	\begin{defn}
		Let $C$ be a smooth projective curve of genus one.	For an effective divisor $E$ on $C \times C$, we say it comes from $m$ line bundles of degree two $R \in Hilb_{m}(Jac^{2}(C))$ if $E=\Sigma^{*}(\rho_{m}(R))$.
		\end{defn}
	Now we fix $G=D_{4},E_{6},E_{7},E_{8}$ or $F_{4}$. We denote by  $J$ the Jordan algebra associated to G, $S=S_{G}$ the subadjoint variety, and we identify $W=W_{G} \cong \CC_{J}$ as in Theorem 2.4. Given a nondegenerate plane $U \subseteq W=W_{G}$, we denote by
	$(C,L,\kappa: C \rightarrow S)$ the triple given in Theorem \ref{cr}. That is, in diagram  (\ref{diag1}), we let  $C=C_{U}, L=L_{U}$, and $\kappa=\kappa_{U}$. As $S \subseteq \mathbb{P}W$ is linearly normal, we view the skew bilinear form $w \in \wedge^{2}W^{\vee}$ as a section of $\CO_{S}(1) \boxtimes \CO_{S}(1)$.   We have the following diagram of morphisms:
	\begin{equation}\label{diag2}
		\begin{tikzcd}
			S \times S  & C \times C \arrow[r,"p_{j}"] \arrow[d,"\Sigma"] \arrow[l,"\kappa \times \kappa "]
			& C \arrow[d,"\pi_{U}"] \arrow[r,"\kappa"]  & S \\
			& Hilb_{2}(C) & \mathbb{P}^{1}
		\end{tikzcd}    
	\end{equation}
where $p_{j}$ is the projection to the $j$-th factor for $j=1,2$. For a point $P \in C$, we denote $C^{j}_{p}=p_{j}^{-1}(p)$. And we naturally identify it with $C$ through the maps  $\lambda_{j}: C \rightarrow C^{j}_{p}: Q \rightarrow (Q,P) \,\, \text{or}\,\, (P,Q)$.\par 
	The main result of this section is the following description of $(\kappa \times \kappa)^{*}(w)$, which implies Theorem \ref{3_lines}(2) combined with Theorem \ref{lines_S}(ii).
	\begin{prop}\label{3_line}
		$(\kappa \times \kappa)^{*}(w) \not=0$. Moreover $[(\kappa \times \kappa)^{*}(w)]=3\Delta_{C} + E_{1} + E_{2}+E_{3} $, where $\Delta_{C}$ is the diagonal, and: \par 
		(i) each $E_{i}$ is an irreducible smooth divisor intersecting with $\Delta_{C}$ properly at four points, and is mapped isomorphically onto $C$ by each $p_{j}$.\par 
		(ii) $E_{k}.E_{l}=0$, for any $1 \leqslant l,k \leqslant3$. And $E=E_{1}+E_{2}+E_{3}$ comes from three degree two line bundles $R_{E} \in Hilb_{3}(Jac^{2}(C))$.
	\end{prop}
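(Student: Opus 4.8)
The plan is to analyze the zero divisor $D$ of the section $(\kappa\times\kappa)^{*}(w)$ of $\kappa^{*}\CO_{S}(1)\boxtimes\kappa^{*}\CO_{S}(1)\cong L^{\otimes 3}\boxtimes L^{\otimes 3}$ on $C\times C$, using the diagram \eqref{diag2}. When $D$ is a genuine divisor it has numerical class $6F_{1}+6F_{2}$ (with $F_{j}$ a fibre of $p_{j}$), and $\Supp(D)=\{(p,q):w(\hat\kappa(p),\hat\kappa(q))=0\}=\{(p,q):l(\kappa(p),\kappa(q))\le 2\}$ by Theorem \ref{lines_S}(ii). First I would check $(\kappa\times\kappa)^{*}(w)\neq 0$: since $U$ is nondegenerate, $TS\cap\BP U$ is four distinct points, so a general $x\in\BP U$ lies in $\BP W\setminus TS$ and hence (Theorem \ref{proj_geom}(3.1), Theorem \ref{lines_S}(iii)) on a unique secant line $l_{pq}$ with $l(p,q)=3$; if $\{P,Q\}=\pi_{U}^{-1}(x)$ then $\{\kappa(P),\kappa(Q)\}=\{p,q\}$ by construction of $\psi_{U}$, so $w(\hat\kappa(P),\hat\kappa(Q))\neq 0$. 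Running the same argument with the deck involution $\sigma$ of $\pi_{U}$ (so that $p+\sigma(p)\in|L|$) also gives $w(\hat\kappa(p),\hat\kappa(\sigma(p)))\neq 0$ for general $p$, a fact I will reuse.

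Next I would bound the multiplicity of $D$ along the diagonal: $\mult_{\Delta_{C}}D\geq 3$. At the generic point of $\Delta_{C}$ the map $\kappa$ is an immersion, so in a local coordinate $t$ on $C$ the vectors $\hat\kappa(t),\hat\kappa'(t)$ span the embedded tangent line of $\kappa(C)\subseteq S$ at $\kappa(t)$, which lies in $\widehat{T_{\kappa(t)}S}$; by Theorem \ref{proj_geom}(2) this subspace is $w$-isotropic, so $w(\hat\kappa'(t),\hat\kappa(t))=0$. Writing $\tilde w(s,t):=w(\hat\kappa(s),\hat\kappa(t))$: skew-symmetry gives $\tilde w(t,t)=0$, so $(s-t)\mid\tilde w$; the identity $w(\hat\kappa'(t),\hat\kappa(t))=0$ forces the quotient to vanish again on $\Delta_{C}$, so $(s-t)^{2}\mid\tilde w$; and skew-symmetry makes the resulting quotient antisymmetric, hence zero on $\Delta_{C}$, so $(s-t)^{3}\mid\tilde w$. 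Consequently $E:=D-3\Delta_{C}$ is effective, of numerical class $6F_{1}+6F_{2}-3\Delta_{C}$.

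The core of the argument is then to recognize $E$ as vertical for the Abel--Jacobi sum $a\colon C\times C\to Jac^{2}(C)$, $(P,Q)\mapsto\CO_{C}(P+Q)$. This $a$ is smooth, and its fibre over $L'$ is the reduced graph $\Gamma_{L'}$ of the involution sending $P$ to the unique $Q$ with $P+Q\sim L'$, which has exactly four fixed points; so $\Gamma_{L'}$ is a smooth irreducible curve mapped isomorphically onto $C$ by each $p_{j}$, with $\Gamma_{L'}\!\cdot F_{j}=1$, $\Gamma_{L'}\!\cdot\Delta_{C}=4$ and $\Gamma_{L'}^{2}=0$. Hence $E\cdot\Gamma_{L'}=6+6-12=0$ for every $L'$, and since $E$ is effective and each $\Gamma_{L'}$ is irreducible, any $(p,q)\in\Supp(E)$ forces the entire fibre of $a$ through $(p,q)$ to lie in $E$. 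Therefore $E=a^{*}(D_{3})$ for an effective divisor $D_{3}$ on $Jac^{2}(C)$, of degree $3$ because $E\cdot F_{1}=3$; writing $D_{3}=[L_{1}]+[L_{2}]+[L_{3}]$ gives $E=E_{1}+E_{2}+E_{3}$ with $E_{i}=\Gamma_{L_{i}}$. This yields (i), and (ii) because distinct fibres of $a$ are disjoint and each has self-intersection $0$, while $\Sigma^{*}\rho_{3}(R_{E})=E$ for $R_{E}=[L_{1}]+[L_{2}]+[L_{3}]\in Hilb_{3}(Jac^{2}(C))$ since $a$ factors as $\nu\circ\Sigma$ with $\nu\colon Hilb_{2}(C)\to Jac^{2}(C)$.

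To finish: $\Delta_{C}$ is not a fibre of $a$ (as $a(\Delta_{C})=Jac^{2}(C)$), so it is not a component of $E$, whence $\mult_{\Delta_{C}}D=3$ exactly and $[(\kappa\times\kappa)^{*}(w)]=3\Delta_{C}+E_{1}+E_{2}+E_{3}$; restricting the linear equivalence $D\sim p_{1}^{*}L^{\otimes 3}+p_{2}^{*}L^{\otimes 3}$ to a fibre $\{p_{0}\}\times C$ gives $3L=L_{1}+L_{2}+L_{3}$, and the nonvanishing fact above for $\sigma$ (a general point $(p,\sigma(p))$ avoids $\Supp(D)$) shows $\Gamma_{L}\not\subseteq E$, i.e. $L\neq L_{i}$; so (2.1) holds, and (2.2) follows since for $p\neq q$ one has $l(\kappa(p),\kappa(q))\le 2\iff (p,q)\in\Supp(E)=\bigcup_{i}\Gamma_{L_{i}}\iff p+q\in|L_{i}|$ for some $i$. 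I expect the main obstacle to be the verticality step: turning the numerical identity $E\cdot\Gamma_{L'}=0$ (for all $L'$) into the geometric splitting $E=E_{1}+E_{2}+E_{3}$ rests on knowing $E$ is effective — hence on the diagonal-multiplicity bound, where the Legendrian/isotropy structure of $S$ is used in an essential way — together with the smoothness of $a$ and the exact intersection numbers of the graphs $\Gamma_{L'}$ with $F_{1},F_{2},\Delta_{C}$. The local diagonal computation and the restriction to a fibre are then routine.
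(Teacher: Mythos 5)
Your proposal is correct, but it takes a genuinely different route from the paper's. For the diagonal multiplicity you use the Legendrian property (Theorem \ref{proj_geom}(2): embedded tangent spaces are $w$-isotropic) together with the symmetric/antisymmetric divisibility trick, which cleanly gives $(s-t)^{3}\mid \tilde w$ at every point of $\Delta_{C}$; the paper instead proves the fiberwise Lemma 3.4 by an explicit computation in the Jordan-algebra chart (\ref{bir_map}), where the order $\geqslant 3$ comes from the norm $N$ being cubic and, at ramification points of $\pi_{U}$, genericity of the tangent direction pins the order down to exactly $3$ — that exactness is how the paper gets $a=3$, whereas you recover $a=3$ a posteriori because $\Delta_{C}$ is not a fibre of the Abel--Jacobi sum map. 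The second real divergence is the identification of $E$: you observe $E\cdot\Gamma_{L'}=0$ for every fibre $\Gamma_{L'}$ of the smooth map $a\colon C\times C\to Jac^{2}(C)$, so the effective divisor $E$ is a sum of three such fibres, and all of (i)--(ii) (smoothness, irreducibility, isomorphism onto $C$ via $p_{j}$, $\Gamma_{L'}\cdot\Delta_{C}=4$, $E_{k}\cdot E_{l}=0$, and $E=\Sigma^{*}\rho_{3}(R_{E})$ via $a=\nu\circ\Sigma$) drop out of the graph description of the fibres. The paper instead runs an intersection-theoretic and Euler-characteristic bookkeeping ($\chi(\CO_{E})=0$, $\chi(\CO_{F})=3$ on $Hilb_{2}(C)$, normalization, the $S_{2}$-invariance Lemma 3.7, case analysis $m=1,2$), and reconstructs the $L_{k}$ from the rational curves $F_{k}\subseteq Hilb_{2}(C)$. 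Your argument is shorter and more conceptual once nonvanishing and the diagonal bound are in place; the paper's is more computational but self-contained within its $Hilb_{2}(C)$/$\rho_{3}$ framework, which is what its relative construction in Section 3.2 reuses (your version plugs into Definition 3.2 equally well through $a=\nu\circ\Sigma$). Two small points to make explicit in your write-up: the scheme-theoretic fibres of $a$ are reduced (this follows from the smoothness of $a$ that you assert, or can be sidestepped by writing $E=\sum_{i} m_{i}\Gamma_{L_{i}}$ directly and computing $\sum_{i} m_{i}=E\cdot F_{1}=3$), and the passage from ``every point of $\Supp(E)$ lies on a fibre contained in $\Supp(E)$'' to ``every component of $E$ is a fibre'' needs the one-line remark that a component on which $a$ is non-constant would force infinitely many distinct fibres inside the finitely many components of $\Supp(E)$.
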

We split the proof into several lemmas. The key lemma is as follows.
	\begin{lem}
		For any $P \in C$,  $(\kappa \times \kappa)^{*}(w)_{|_{C^{j}_{P}}} \not=0$, and $[(\kappa \times \kappa)^{*}(w)]_{|_{C^{j}_{P}}}=3P+\sum_{i=1}^{3}P_{i}$ for some points $P_{1},P_{2},P_{3}$ on $C$. Moreover  if $\pi_{U}$ is ramified at $P$, then 
		$P_{i} \not=P$, for any $i=1,2,3$.
	\end{lem}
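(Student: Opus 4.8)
The plan is to analyze the restriction of $(\kappa \times \kappa)^{*}(w)$ to the curve $C^{j}_{P} \cong C$ by unwinding the geometric meaning of the fiber $\pi_{U}^{-1}(\pi_{U}(P))$ and the definition of $w$. First I would fix $P \in C$ and use the identification $\lambda_{j} : C \xrightarrow{\sim} C^{j}_{P}$, so that the section in question becomes $s_{P} := \lambda_{j}^{*}(\kappa \times \kappa)^{*}(w) \in H^{0}(C, \kappa^{*}\CO_{S}(1) \otimes \kappa(P)^{*}\CO_{S}(1)) = H^{0}(C, L^{\otimes 3} \otimes \CO_{C})= H^{0}(C, L^{\otimes 3})$, a line bundle of degree $6$; concretely $s_{P}(Q) = w(\kappa(P), \kappa(Q))$ up to the trivializations. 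So the divisor $[s_{P}]$ has degree $6$, and the content is to locate its six points. By Theorem \ref{lines_S}(ii), $w(\kappa(P),\kappa(Q)) = 0$ if and only if $\kappa(P)$ and $\kappa(Q)$ are connected by at most two lines on $S$, i.e. $\kappa(Q) \in \BP\widehat{T_{z}S} \cap S$ for some $z \in S$ through which a line of $S$ passes to $\kappa(P)$.

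The first key step is to show $Q = P$ occurs in $[s_{P}]$ with multiplicity exactly $3$. Here I would use the description of $\kappa = \kappa_{U}$ from Definition \ref{cr_det}: away from the ramification of $\pi_{U}$, the two points of the fiber $\pi_{U}^{-1}(\pi_{U}(Q))$, call them $Q$ and $Q'$, satisfy $\kappa(Q) + \kappa(Q') = \psi_{U}(x)$ for the secant/tangent line through the point $x = \pi_{U}(Q) \in \BP U$; in particular $\kappa(Q)$ and $\kappa(Q')$ lie on a common line of $S$ (the secant or tangent line), hence $l(\kappa(Q),\kappa(Q')) \le 1 \le 2$, so $w(\kappa(Q),\kappa(Q')) = 0$. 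Taking $Q \to P$ this shows $\kappa(P)$ is ``self-isotropic'' in the appropriate infinitesimal sense, and I would extract the multiplicity-$3$ vanishing at $P$ from the fact that $\kappa^{*}\CO_{S}(1) \cong L^{\otimes 3}$ together with the local structure of the tangent line to $S$ at $\kappa(P)$: the embedded tangent space $\widehat{T_{\kappa(P)}S}$ is isotropic (Theorem \ref{proj_geom}(2)), so $w(\kappa(P), -)$ vanishes to order at least the ``order of contact'' of $\kappa(C)$ with $\widehat{T_{\kappa(P)}S}$, which the genus-one degree-$2$-times-$3$ structure forces to be exactly $3$ generically. Concretely I expect to compute in a local coordinate: writing $\kappa$ near $P$ as a curve in $J \subseteq S$ via $q_J$, and using that $w$ restricted to $\widehat{T}$ is zero and the cubic $N$ governs second-order behavior, the vanishing order of $t \mapsto w(\kappa(0),\kappa(t))$ at $t=0$ is $3$.

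The second step handles the remaining three points. Since $[s_{P}]$ has total degree $6$ and $3P$ is accounted for, the residual divisor $\sum_{i=1}^3 P_i$ has degree $3$; I would simply read off that it is effective of degree $3$, giving the three points $P_1, P_2, P_3$ (with multiplicity). The final clause — that if $\pi_U$ is ramified at $P$ then $P_i \ne P$ for all $i$ — is where I expect the main obstacle. The point is to rule out vanishing of $s_P$ at $P$ to order $> 3$. At a ramification point of $\pi_U$, the fiber $\pi_U^{-1}(\pi_U(P))$ is the non-reduced scheme $2P$, and $\psi_U(\pi_U(P))$ is the tangent-direction datum at $\kappa(P)$; geometrically this says $\kappa(C)$ is tangent to a line $\ell \subseteq S$ at $\kappa(P)$, so $\kappa(P)$ is a point of $S$ with a distinguished line through it. One must show this extra tangency does \emph{not} bump the contact order of $\kappa(C)$ with $\widehat{T_{\kappa(P)}S}$ beyond $3$ — equivalently that $\kappa(C)$ is not ``abnormally flat'' against the isotropic tangent hyperplane at such points. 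I would argue this by contradiction: if $s_P$ vanished to order $\ge 4$ at $P$, then combined with the genericity of the tangent direction $[T_{\kappa(P)}\ell]$ (which holds by Theorem \ref{lines_S}(iii), since $x \in TS \setminus \mathrm{Sing}(TS)$ for nondegenerate $U$ — the roots of $\det|_{\BP U}$ avoid $\mathrm{Sing}(TS)$ by the proof of Theorem \ref{3_lines}(1)) one would force $\deg L^{\otimes 3} > 6$ or produce a global section of a negative-degree bundle, a contradiction. Making this last dichotomy precise — relating ``$\pi_U$ ramified at $P$'' cleanly to the contact order via the explicit quartic $\det|_{\BP U}$ and its simple root at $x$ — is the technical heart, and I would carry it out by the local computation in the Jordan-algebra coordinates of (\ref{bir_map}), using that a nondegenerate $U$ meets $TS$ transversally (four simple roots).
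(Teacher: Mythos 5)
Your first key step rests on a false assertion, and it is exactly the point where the lemma's first claim lives. You argue that the two points $\kappa(Q),\kappa(Q')$ of a fiber of $\pi_{U}$ ``lie on a common line of $S$ (the secant or tangent line), hence $l(\kappa(Q),\kappa(Q'))\le 1$, so $w(\kappa(Q),\kappa(Q'))=0$.'' But the secant line $l_{\kappa(Q)\kappa(Q')}$ is a line of $\BP W$, not a line contained in $S$. In fact Theorem \ref{lines_S}(iii) says the opposite: for a nondegenerate $U$ and $\pi_{U}(Q)\in\BP U\setminus TS$, the unique secant through $\pi_{U}(Q)$ joins two points with $l(\kappa(Q),\kappa(Q'))=3$, hence by Theorem \ref{lines_S}(ii) $w(\kappa(Q),\kappa(Q'))\neq 0$. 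This is not a peripheral slip: that nonvanishing at the conjugate point $\overline{P}$ is precisely how one proves the first assertion of the lemma, namely $(\kappa\times\kappa)^{*}(w)_{|C^{j}_{P}}\not\equiv 0$ when $P$ is unramified. Your proposal never addresses this nonvanishing at all — you pass directly to ``the divisor $[s_{P}]$ has degree $6$,'' which presupposes $s_{P}\neq 0$; a priori $C^{j}_{P}$ could be a component of the zero divisor of $(\kappa\times\kappa)^{*}(w)$, and ruling that out is part of the statement.

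For the vanishing order at $P$, your fallback — write $\kappa$ near $P$ through $q_{J}$ so that $w(\kappa(P),\kappa(t))=N(f(t))$ with $f(P)=0$ and $N$ cubic — is the right (and the paper's) computation, and it yields order $\geqslant 3$; note that ``exactly $3$'' is neither needed for the displayed decomposition nor true at every $P$ (at the finitely many points where the residual divisor meets the diagonal some $P_{i}=P$), so only ``$\geqslant 3$ plus total degree $6$'' should be claimed in general. For the ramified case, the correct conclusion is again local rather than your proposed global contradiction (vanishing to order $\geqslant 4$ at one point of a degree-$6$ divisor is not by itself contradictory): ramification at $P$ means $\pi_{U}(P)\in TS\setminus Sing(TS)$ and $[\Im\, d\kappa_{|P}]=[T_{\kappa(P)}l_{\kappa(P),\pi_{U}(P)}]$ is a generic tangent direction by Theorem \ref{lines_S}(iii), i.e. $N(f'(P))\neq 0$, whence $N(f(t))$ vanishes to order exactly $3$ — which simultaneously gives the nonvanishing of $s_{P}$ in the ramified case. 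You correctly identified the genericity of the tangent direction as the needed input, but you left this decisive step (``the technical heart'') unexecuted and framed its conclusion through a degree count that does not close the argument.
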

	\begin{proof}
		First note that $(\kappa \times \kappa)^{*}(\CO_{S}(1) \boxtimes \CO_{S}(1))]_{|_{C^{j}_{P}}} \cong 3L$ is of degree 6. Thus it suffices to show that $$(\kappa \times \kappa)^{*}(w)_{|_{C^{j}_{P}}} \not=0\,\,\text{and}\,\,ord_{P}((\kappa \times \kappa)^{*}(w)_{|_{C^{j}_{P}}}) \geqslant 3,$$ 
		where the equality holds if $\pi_{U}$ is ramified at $P$.
		Denote $q=\kappa(P)$. Then by the homogeneity of $S$ we can assume $q=x_{0}=[A_{0}]$ in (\ref{bir_map}), where $A_{0}=(1,0,0,0) \in  \CC_{J}$. Denote by $[w_{x_{0}}]=[w(A_{0},-)] \in \mathbb{P}H^{0}(S,\CO_{S}(1))$ the class of effective Cartier divisor given by $x_{0}$.  Then one easily sees that $[(\kappa \times \kappa)^{*}(w)]_{|_{C^{j}_{P}}}=\kappa^{*}([w_{x_{0}}])$. Now if $P$ is an unramified point of $\pi_{U}$, denote $\overline{P}$ to be its conjugate. Then by the definition in (\ref{diag1}), $\pi_{U}(P)$ lies on the line $l_{\kappa(P),\kappa(\overline{P})}$. And as $U$ is nondegenerate, by Theorem \ref{lines_S}(iii), we conclude that $w(\kappa(P),\kappa(\overline{P}))\not=0$ hence $(\kappa \times \kappa)^{*}(w)_{|_{C^{j}_{P}}} \not=0$.
		\par 
		On the other hand, from the birational map (\ref{bir_map}), we can write $\kappa$ locally near $P$ as follows:
		\begin{align*}
			\kappa_{|\CV}:    \CV &\rightarrow \mathbb{P}(\mathbb{C} \oplus J \oplus J \oplus \mathbb{C}) \\
			v &\rightarrow [1: f(v):f(v)^{\#}:N(f(v))],
		\end{align*}
		where $\CV$ is a suitable affine open neighbourhood of $P$ in $C$, and $f$ is the unique morphism from $\CV$ to $J$ such that $\kappa_{|_{\CV}}$ factors through the natural embedding of $J$ inside $S$ as in (2.4). In particular $f(P)=0$. By the definition of $w$ in (\ref{w}), $\kappa_{|\CV}^{*}(w_{x_{0}})$ equals the divisor defined by the regular function $N \circ f$. As $N$ is a cubic polynomial over $J$, we conclude that $ord_{P}(\kappa_{|\CV}^{*}(w_{x_{0}})) \geqslant 3$. Now if $P$ is a ramified point of $\pi_{U}$, then by the definition of $\kappa$ in (\ref{diag1}), $\pi_{U}(P) \in TS \backslash Sing(TS)$ and $\pi_{U}(P)$ lies on the tangent line $l_{q,\pi_{U}(P)}$. Moreover the tangent direction $[Im(d\kappa_{|_{P}})]=[T_{q}l_{q,\pi_{U}(P)}]$ is generic by Theorem \ref{lines_S}(3), i.e., $N(f^{'}(q)) \not=0$. This implies that $\kappa_{|\CV}^{*}(w_{x_{0}}) \not=0$, and $ord_{P}(\kappa_{|\CV}^{*}(w_{x_{0}}))= 3$
	\end{proof}
	By the above lemma, $(\kappa \times \kappa)^{*}(w) \not=0$. We write $$D := [(\kappa \times \kappa)^{*}(w)]=a \Delta_{C} + \sum_{k=1}^{m}E_{k},$$
	where each $E_{i}$ is a prime divisor different from $\Delta$, and $E_{i}$ can be equal to $E_{j}$ for some $i,j$. And we write $E=\sum_{k=1}^{m}E_{k}$ as the non-diagonal part of $D$.
	\begin{lem}    $a=3,D^{2}=72,\Delta_{C}.E=12$, and $E^{2}=0$. Moreover $\chi(C \times C, \CO_{E})=0$
	\end{lem}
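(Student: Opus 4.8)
The plan is to combine the numerical class of $D$ on the surface $C\times C$ with the pointwise information furnished by the preceding lemma. First I would pin down the numerical class of $D$. Since $\kappa^{*}\CO_{S}(1)\cong L^{\otimes 3}$ with $\deg L=2$, the pullback $(\kappa\times\kappa)^{*}(\CO_{S}(1)\boxtimes\CO_{S}(1))$ equals $p_{1}^{*}(L^{\otimes 3})\otimes p_{2}^{*}(L^{\otimes 3})$; writing $F_{1}$ (resp. $F_{2}$) for a fibre of $p_{1}$ (resp. $p_{2}$), this says $D\equiv 6F_{1}+6F_{2}$ numerically. Using $F_{1}^{2}=F_{2}^{2}=0$, $F_{1}\cdot F_{2}=1$, $\Delta_{C}\cdot F_{1}=\Delta_{C}\cdot F_{2}=1$ and $\Delta_{C}^{2}=2-2g(C)=0$ (as $C$ has genus one), I obtain at once $D^{2}=72$ and $D\cdot\Delta_{C}=12$.

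The real work is to show that the multiplicity $a$ of $\Delta_{C}$ in $D$ equals $3$; after that everything is bookkeeping. For $a\geqslant 3$ I would restrict $D$ to a fibre $C^{j}_{P}$ with $P$ generic: the local description of $\kappa$ near $P$ from the previous lemma, namely $v\mapsto[1:f(v):f(v)^{\#}:N(f(v))]$ with $N$ a cubic form and $f(P)=0$, shows $\mathrm{ord}_{P}\big((\kappa\times\kappa)^{*}(w)|_{C^{j}_{P}}\big)\geqslant 3$; for generic $P$ the divisor $E$ avoids $(P,P)$ while $\Delta_{C}$ meets $C^{j}_{P}$ transversally there, so this order is exactly $a$, whence $a\geqslant 3$. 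For $a\leqslant 3$ I would take $P$ to be a ramification point of the degree-two map $\pi_{U}$: the previous lemma gives $[(\kappa\times\kappa)^{*}(w)]|_{C^{j}_{P}}=3P+\sum_{i=1}^{3}P_{i}$ with $P_{i}\neq P$ (this is legitimate since $C^{j}_{P}$ is not contained in the support of $D$), so the coefficient of $(P,P)$ in $D|_{C^{j}_{P}}$ is exactly $3$; as that coefficient is $a$ plus the non-negative contribution of $E|_{C^{j}_{P}}$, we get $a\leqslant 3$. Hence $a=3$ and $D=3\Delta_{C}+E$.

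With $a=3$ in hand I would conclude directly: $E\cdot\Delta_{C}=(D-3\Delta_{C})\cdot\Delta_{C}=12-3\cdot 0=12$ and $E^{2}=(D-3\Delta_{C})^{2}=D^{2}-6\,D\cdot\Delta_{C}+9\,\Delta_{C}^{2}=72-72+0=0$. For the last assertion I would use the structure sequence $0\to\CO_{C\times C}(-E)\to\CO_{C\times C}\to\CO_{E}\to 0$ together with Riemann--Roch on the abelian surface $C\times C$ and the facts $\chi(\CO_{C\times C})=\chi(\CO_{C})^{2}=0$ and $K_{C\times C}\equiv 0$ (since $\deg K_{C}=0$), which yield $\chi(C\times C,\CO_{E})=-\frac{1}{2}\big(E^{2}+E\cdot K_{C\times C}\big)=0$. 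The only genuinely delicate step is the determination of $a$, and it is carried entirely by the local analysis of $\kappa$ already set up in the previous lemma; the rest is standard intersection theory on $C\times C$.
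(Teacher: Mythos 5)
Your proposal is correct and follows essentially the same route as the paper: $a=3$ is pinned down exactly as there, by restricting to a fibre through a generic point (giving $a\geqslant 3$ via Lemma 3.4) and through a ramification point of $\pi_{U}$ (giving $a\leqslant 3$ via the ``moreover'' part of Lemma 3.4), and the remaining equalities $D^{2}=72$, $\Delta_{C}\cdot E=12$, $E^{2}=0$, $\chi(\CO_{E})=0$ are obtained by the same intersection-theoretic bookkeeping, with your use of the numerical class $6F_{1}+6F_{2}$ and triviality of $K_{C\times C}$ being just a rephrasing of the paper's $D\in|3L\boxtimes 3L|$ and adjunction/Riemann--Roch computations.
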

	\begin{proof}
		Denote by $p_{j}^{k}$ the restriction of $p_{j}$ to $E_{k}$ for each $j=1,2$ and for any $k$. Take any $P \in C \backslash (\bigcup\limits_{k} p_{j}^{k}(E_{i} \cap \Delta_{C}) )$, then $3 \leqslant mult_{P}(D\cap  C_{P}^{j})=a \Delta_{C}\cdot C_{P}^{j}=a$ by Lemma 3.4. On the other hand, take $P$ as a ramified point of $\pi_{U}$, then also by Lemma 3.4, $a=a \Delta_{C}\cdot C_{P}^{j} \leqslant ord_{P}(D_{|_{C_{P}^{j}}})=3$.  As $D \in |3L \boxtimes 3L|$,  $D^{2}=2 \cdot deg(3L)^{2}= 72$. $\Delta_{C}^{2}=(K_{C \times C}+\Delta_{C})\cdot \Delta_{C}=deg(K_{\Delta_{C}})=0$ as $C$ is of genus of one. Now $\Delta_{C}\cdot E=\Delta_{C}\cdot D=deg((3L \boxtimes 3L)_{|_{\Delta_{C}}})=deg(6L)=12$.  Then $E^{2}=D^{2}-2a\Delta_{C}\cdot E=0$. Finally by the Riemann-Roch Theorem we have $$\chi(C \times C, \CO_{E})=\chi(C \times C, \CO_{C \times C})-\chi(C \times C, \CO_{C \times C}(-E))=-(E+K_{C \times C})\cdot E/2=0$$.
	\end{proof}
	\begin{cor}
		We have $m \leqslant 3$. Moreover each $E_{k}$ is a smooth genus one curve on $C \times C$, and $p_{j}^{k}$ is a surjective \'etale morphism for any $j$. Finally $E_{k}\cdot E_{l}=0$, for any $1 \leqslant k,l \leqslant 3$.
	\end{cor}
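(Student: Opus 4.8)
The plan is to combine the key Lemma 3.4 on fibrewise restrictions with the vanishing $E^{2}=0$ of the previous lemma, via adjunction on $C\times C$ (recall $K_{C\times C}\cong\CO_{C\times C}$, since $C$ has genus one). First I would use Lemma 3.4 to rule out vertical components: since $D=[(\kappa\times\kappa)^{*}(w)]$ and Lemma 3.4 says that $(\kappa\times\kappa)^{*}(w)$ restricts to a \emph{nonzero} section on \emph{every} fibre $C_{P}^{j}$, no fibre $C_{P}^{j}$ lies in $\Supp(D)$; in particular no $E_{k}$ (and not $\Delta_{C}$) is a fibre of $p_{1}$ or $p_{2}$, so each $E_{k}$ dominates $C$ under both projections, and $p_{j}^{k}=p_{j}|_{E_{k}}$ is finite surjective of degree $d_{j}^{k}:=E_{k}\cdot C_{P}^{j}\ge 1$. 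Because $D\in|3L\boxtimes 3L|$ restricts to degree $\deg(3L)=6$ on $C_{P}^{j}\cong C$ while $\Delta_{C}\cdot C_{P}^{j}=1$, we get $E\cdot C_{P}^{j}=6-3=3$, hence $m\le\sum_{k=1}^{m}d_{j}^{k}=3$.

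Next I would bound self-intersections from below. For each $k$, let $\nu\colon\widetilde{E}_{k}\to E_{k}$ be the normalisation; then the composite $\widetilde{E}_{k}\to E_{k}\xrightarrow{p_{j}^{k}}C$ is a finite nonconstant morphism to a genus-one curve, so Riemann--Hurwitz gives $2g(\widetilde{E}_{k})-2=\deg(\text{ramification divisor})\ge 0$, i.e.\ $g(\widetilde{E}_{k})\ge 1$, whence $p_{a}(E_{k})\ge g(\widetilde{E}_{k})\ge 1$, and by adjunction $E_{k}^{2}=2p_{a}(E_{k})-2\ge 0$. Now I would invoke $E^{2}=0$: writing $0=E^{2}=\sum_{k}E_{k}^{2}+\sum_{k\ne l}E_{k}\cdot E_{l}$ with every summand $\ge 0$ (the cross term $E_{k}\cdot E_{l}$, $k\ne l$, equals $E_{k}^{2}\ge 0$ when $E_{k}=E_{l}$, and is the intersection number of two distinct effective divisors otherwise), all summands vanish: $E_{k}\cdot E_{l}=0$ for all $1\le k,l\le m$, and $E_{k}^{2}=0$, so $p_{a}(E_{k})=1$. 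Since $1\le g(\widetilde{E}_{k})\le p_{a}(E_{k})=1$, the $\delta$-invariants of $E_{k}$ all vanish, so $\nu$ is an isomorphism and $E_{k}$ is a smooth curve of genus one. Finally $p_{j}^{k}\colon E_{k}\to C$ is a nonconstant morphism of smooth genus-one curves, so its ramification divisor has degree $2g(E_{k})-2-d_{j}^{k}\bigl(2g(C)-2\bigr)=0$; thus $p_{j}^{k}$ is unramified, hence étale, and surjective because its image is a closed positive-dimensional subvariety of $C$.

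The main obstacle — really the only delicate point — is the first step: Lemma 3.4 must be applied at \emph{every} point $P\in C$, not just a generic one, precisely to exclude that some $E_{k}$ is a vertical fibre $C_{P_{0}}^{j}$; without that the identity $E\cdot C_{P}^{j}=3$ yields no bound on $m$ and $p_{j}^{k}$ need not even be finite. Everything past Step 1 is the routine interplay of adjunction on $C\times C$ with the single vanishing $E^{2}=0$, with essentially no computation (the invariant $\chi(C\times C,\CO_{E})=0$ of the previous lemma is not needed here).
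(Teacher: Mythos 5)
Your proposal is correct, and it reaches the conclusion by a partly different mechanism than the paper. The first half coincides: both you and the paper use Lemma 3.4 at \emph{every} point $P$ to rule out vertical components, giving surjectivity of the $p_{j}^{k}$ and $\sum_{k}d_{j}^{k}=E\cdot C_{P}^{j}=3$, hence $m\leqslant 3$, and both ultimately get \'etaleness from Riemann--Hurwitz once smoothness and genus one are known. Where you diverge is the proof of smoothness and of $E_{k}\cdot E_{l}=0$: the paper runs a case analysis on $m$ and the degrees $d_{j}^{k}$ (if all $d_{j}^{k}=1$ the $p_{j}^{k}$ are isomorphisms; for $m\leqslant 2$ it uses the normalization exact sequence together with $\chi(C\times C,\CO_{E})=0$ from Lemma 3.5), and only afterwards deduces $E_{k}\cdot E_{l}=0$ from $E_{k}^{2}=0$ and $E^{2}=0$. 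You instead get $E_{k}^{2}=2p_{a}(E_{k})-2\geqslant 0$ directly from adjunction on the abelian surface plus $g(\widetilde{E}_{k})\geqslant 1$ (Riemann--Hurwitz over $C$), and then let the single relation $E^{2}=0$, expanded into pairwise nonnegative terms, force $E_{k}^{2}=0$ and $E_{k}\cdot E_{l}=0$ simultaneously; this pins $p_{a}(E_{k})=g(\widetilde{E}_{k})=1$, kills the $\delta$-invariants, and gives smoothness with no case split. Your remark that $\chi(\CO_{E})=0$ is not needed is accurate, though it is not really extra information: since $K_{C\times C}$ is trivial, Riemann--Roch gives $\chi(\CO_{E})=-E^{2}/2$, so the two inputs from Lemma 3.5 are numerically equivalent; what your route buys is a cleaner, uniform argument (positivity of intersection numbers plus adjunction) in place of the paper's enumeration of the possible values of $m$ and $d_{j}^{k}$, and it handles possible repeated components of $E$ without special attention.
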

	\begin{proof}
		For the first two parts, by Hurwitz's formula,  it suffices to show that each $E_{k}$ is smooth of genus one and $p_{j}^{k}$ is surjective.
		From Lemma 3.4, $D$ does not contain $C_{p}^{j}$ for any $P$ and $j$. Thus $p_{j}^{k}$ is surjective for each $k$. Denote by $d_{j}^{k}=deg(p_{j}^{k})$, then by Lemma 3.5 we have $\sum_{k=1}^{m}d_{j}^{k}=3$, and hence $m\leqslant 3$. If $d_{j}^{k}=1$ for each $k$, then we conclude that $m=3$, and each $p_{j}^{k}$ is an isomorphism. Now assume $m \leqslant 2$. Denote $q_{E}:\Tilde{E} \rightarrow E$ to be the normalization of $E$, and $Q_{E} \dot=(q_{E})_{*}(\CO_{\Tilde{E}})/\CO_{E}$ the quotient sheaf with finite support. Then $\chi(Q_{E})=\chi(\CO_{\Tilde{E}})$ as $\chi(C \times C, \CO_{E})=0$. If $m=1$ then $\Tilde{E}$ is irreducible, so $1-g(\Tilde{E})=\chi(\CO_{\Tilde{E}})=\chi(Q_{E}) \geqslant 0$. But $g(\Tilde{E}) \geqslant g(C) =1$. Thus we conclude that $g(\Tilde{E})=1$ hence $Q_{E}=0$, $q_{E}$ is an isomorphism, and $E$ is a smooth curve of  genus one. If $m=2$ then we can write $\Tilde{E}=\Tilde{E}_{1} \cup \Tilde{E}_{2} $, where each $\Tilde{E}_{k}$ is the normalization of $E_{k}$. And we can assume $d_{1}^{1}=1,d_{1}^{2}=2$. Thus $f_{1}^{1}$ is an isomorphism and $E_{1}$ is a smooth curve of genus one. Then $\chi(\CO_{\Tilde{E}})=\chi(\CO_{\Tilde{E_{1}}})+\chi(\CO_{\Tilde{E_{2}}})=1-g(\Tilde{E_{2}})=\chi(Q_{E}) \geqslant 0$. By the same discussion as for $m=1$ we conclude that $E_{2}$ is a smooth curve of genus one. For the last part, for each $E_{k}$ we have $E_{k}^{2}=(K_{C \times C}+E_{k})\cdot E_{k}=deg(K_{E_{k}})=0$. Thus $\sum_{E_{i}\not=E_{j}}E_{i}\cdot E_{j}=(\sum_{k}E_{k})^{2}=E^{2}=0$, concluding the proof.
	\end{proof}
	
	\begin{lem}
		Let $F$ be an irreducible smooth curve of genus one on $C \times C$, different from $\Delta_{C}$. If $F$ is $S_{2}$-invariant and if it is surjective \'etale over $C$ under each projection $p_{j}$, then it intersects properly with the diagonal $\Delta_{C}$.
	\end{lem}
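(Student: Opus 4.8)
The plan is to exploit that, $C$ having genus one, the surface $C\times C$ is an abelian surface and the swap $s\colon C\times C\to C\times C$, $(x,y)\mapsto(y,x)$, is a group automorphism; this reduces the statement to a short computation with isogenies of elliptic curves, and in fact gives the sharper conclusion that $F\cap\Delta_C$, when non-empty, is transverse. First I would use the structural fact that a smooth curve of genus one in an abelian surface is a translate of a one-dimensional abelian subvariety: after a translation $F$ may be assumed to pass through the origin, and then the inclusion $F\hookrightarrow C\times C$ is a homomorphism by rigidity, so it is a one-dimensional abelian subvariety; in general $F=E'+a$ for a one-dimensional abelian subvariety $E'\subseteq C\times C$ and a point $a=(a_1,a_2)$. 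Writing $\alpha=p_1|_{E'}$ and $\beta=p_2|_{E'}\colon E'\to C$, these are surjective homomorphisms between one-dimensional abelian varieties (surjective because $p_j|_F$ is), hence isogenies, necessarily \'etale in characteristic zero. A point $(\alpha(e)+a_1,\beta(e)+a_2)$ of $F$ lies on $\Delta_C$ precisely when $(\alpha-\beta)(e)=a_2-a_1$, so the entire question is whether the homomorphism $\alpha-\beta\colon E'\to C$ vanishes.

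Next I would pin this down using the $S_2$-invariance. Because the linear part of a translate of an abelian subvariety is unique, $s(F)=F$ forces $s(E')=E'$; since $s$ is a group automorphism fixing the origin, $\tau:=s|_{E'}$ is then a group automorphism of the elliptic curve $E'$ with $\tau^{2}=\id$, hence $\tau=\pm\id$. As $s(e)=(\beta(e),\alpha(e))$ for $e\in E'$, we have $\alpha\circ\tau=\beta$, so $\beta=\pm\alpha$. If $\beta=\alpha$, then $E'=\{(\alpha(e),\alpha(e)):e\in E'\}=\Delta_C$ because $\alpha$ is onto, so $F=\Delta_C+a$; as $F\ne\Delta_C$ this forces $a_1\ne a_2$, and then $F\cap\Delta_C=\varnothing$, which is a proper (vacuously transverse) intersection. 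If $\beta=-\alpha$, then $\alpha-\beta=2\alpha$ is again an \'etale isogeny, so $(2\alpha)(e)=a_2-a_1$ has a non-empty, reduced, finite solution set; thus $F\cap\Delta_C$ is finite, and at each intersection point the tangent line to $F$ in $\Lie(C)\oplus\Lie(C)$ is the graph of $d\beta\circ(d\alpha)^{-1}=-\id$, which is transverse to the tangent line of $\Delta_C$, the graph of $\id$. In either case $F$ meets $\Delta_C$ properly, which is the assertion.

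I do not expect a genuine obstacle: the core is linear algebra once $F$ is recognised as a translate of an abelian subvariety, and the hypotheses enter only to know that $p_j|_F$ are isogenies and that $s$ restricts to an involution of $E'$. The one delicate point is the ``parallel to the diagonal'' case $\beta=\alpha$, $a_1\ne a_2$, which satisfies every hypothesis yet gives $F\cap\Delta_C=\varnothing$, so ``intersects properly'' has to be understood to permit the empty intersection. If, for the curves $E_i$ occurring in $[(\kappa\times\kappa)^{*}(w)]$, one additionally wants $F\cap\Delta_C$ to be non-empty — hence, by the above, four transverse points — that is a separate assertion, resting on the fibre-wise description of $[(\kappa\times\kappa)^{*}(w)]$ established earlier rather than on this lemma. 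Finally one should be careful to invoke correctly the rigidity lemma for morphisms of abelian varieties and the uniqueness of the linear part of a translate, both of which are standard.
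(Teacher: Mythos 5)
Your proof is correct, but it takes a genuinely different route from the paper's. You argue globally: choosing an origin on $C$, you view $C\times C$ as an abelian surface, identify $F$ (via the Albanese/rigidity argument) as a translate $E'+a$ of an elliptic subgroup, use uniqueness of the linear part to see that the swap restricts to an involutive automorphism of $E'$, hence $\pm\mathrm{id}$; then either $\beta=\alpha$, so $F$ is a translate of $\Delta_{C}$ with $a_{1}\neq a_{2}$ and $F\cap\Delta_{C}=\varnothing$, or $\beta=-\alpha$ and at every intersection point the tangent line of $F$ is the graph of $-\mathrm{id}$, transverse to the diagonal. The paper instead argues locally at a point $b\in F\cap\Delta_{C}$: since $p_{1}|_{F}$ is \'etale, $f_{b}$ is a uniformizer of $\mathcal{O}_{F,b}$, the ideal of $\Delta_{C}$ restricted to $F$ is generated by $f_{b}-\sigma(f_{b})$, writing $\sigma(f_{b})=cf_{b}+uf_{b}^{2}$ one gets $c=\pm1$ from $\sigma^{2}=\mathrm{id}$, and $c=+1$ is excluded because an induction on $\mathcal{O}_{F,b}/\mathfrak{m}_{F,b}^{k}$ would force $\sigma$ to fix $\widehat{\mathcal{O}}_{F,b}$, hence $F$ pointwise, i.e.\ $F=\Delta_{C}$; so $c=-1$ and $f_{b}-\sigma(f_{b})$ has order one, which is exactly the transversality. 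The paper's local computation is more elementary and does not really use that $F$ has genus one (only smoothness, \'etaleness over $C$, $S_{2}$-invariance and $F\neq\Delta_{C}$), so it is slightly more general; your global argument uses the genus-one hypothesis essentially but buys extra structure: the clean dichotomy $\beta=\pm\alpha$, the description of $F\cap\Delta_{C}$ as a reduced fiber of the isogeny $2\alpha$, and the explicit observation---consistent with the paper, whose proof likewise only addresses actual intersection points---that the intersection can be empty (a translate of $\Delta_{C}$ by a $2$-torsion shift satisfies all hypotheses), so that non-emptiness for the curves $E_{k}$ in Proposition 3.3 must come from the intersection number $\Delta_{C}\cdot E_{k}=4$, as you correctly note. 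Both proofs deliver the same content: every point of $F\cap\Delta_{C}$ is a transverse intersection point.
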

	\begin{proof}
		Denote by $\sigma$ the generator of $S_{2}$.  Assume $b=(P,P) \in \Delta_{C} \cap F$, denote $\mathfrak{m}_{C,P} \,(resp. \, \mathfrak{m}_{F,b})$ to be the maximal ideal of local ring $\CO_{C,P}\,(resp.\,  \CO_{F,b})$, and let $t_{b}\, (resp.\,  f_{b})$ be a parameter of the maximal ideal. 
		Note that the ideal $(I_{\Delta_{C}})_{|_{b}} \subseteq \CO_{C \times C, b}$ is generated by $t_{b}\otimes 1-1 \otimes t_{b}$. As $F$ is \'etale over $C$ and $S_{2}$-invariant, $(I_{\Delta_{C}})_{|_{F,b}}$ is then generated by $f_{b}-\sigma(f_{b})$. As $\sigma$ fixes $b$, $\sigma$ acts on $\CO_{F,b}$ and  we can assume $\sigma(f_{b})=cf_{b}+u f_{b}^{2}$ for some $c \in \mathbb{C},u \in \CO_{F,b}$. Then applying $\sigma^{2}=1$ we get $c=\pm 1$. Thus it suffices to show that $c=-1$.  If otherwise, then we claim that $\sigma$ fixes $\CO_{F,b}$, whence $F$ is fixed and thus $F=\Delta_{C}$, which is a contradiction. To show the claim, we first prove by induction that $\sigma$ fixes $\CO_{F,b}/\mathfrak{m}_{F,b}^{k}$ for each positive integer $k$, then the claim follows from the fact that $\CO_{F,b}$ is $\sigma$-equivariantly included in the complete local ring $\widehat{\CO}_{F,b}$. Now assume by induction that  $\sigma$ fixes  $\CO_{F,b}/\mathfrak{m}_{F,b}^{k}$ for some $k \geqslant 2$, then consider the $\sigma$-equivariant exact sequence:
		\begin{equation*}
			0 \rightarrow \mathfrak{m}_{F,b}^{k}/\mathfrak{m}_{F,b}^{k+1} \rightarrow  \CO_{F,b}/\mathfrak{m}_{F,b}^{k+1} \rightarrow \CO_{F,b}/\mathfrak{m}_{F,b}^{k} \rightarrow 0.
		\end{equation*}
		By assumption for every $\alpha \in \CO_{F,b}/\mathfrak{m}_{F,b}^{k+1}$, $\sigma(\alpha) -\alpha \in \mathfrak{m}_{F,b}^{k}/\mathfrak{m}_{F,b}^{k+1}$. But $\sigma(\sigma (\alpha) -\alpha)=-(\sigma(\alpha) -\alpha)$ and $\sigma$ fixes $\mathfrak{m}_{F,b}^{k}/\mathfrak{m}_{F,b}^{k+1} \cong Sym^{k}(\mathfrak{m}_{F,b}/\mathfrak{m}_{F,b}^{2})$. Thus we conclude that $\sigma(\alpha)=\alpha$, the claim is proved.
	\end{proof}
	Recall that $\Sigma$ is a ramified double cover  with ramified divisor $\Delta_{C}$. Then any $S_{2}$-invariant effective divisor on $C \times C$ whose components do not include $\Delta_{C}$ must be a pull-back of an effective divisor on $Hilb_{2}(C)$. Denote by $F$ the effective divisor on $Hilb_{2}(C)$ such that $\Sigma^{*}(F)=E$.
	\begin{lem}
		$F^{2}=0$, and $\chi(Hilb_{2}(C),\CO_{F})=3$.
	\end{lem}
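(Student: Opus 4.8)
The plan is to pull everything back along the quotient map $\Sigma\colon C\times C\to Y$, where $Y:=Hilb_{2}(C)=\Sym^{2}(C)$ is a smooth projective surface and $\Sigma$ is, as recalled just above, a double cover simply ramified along $\Delta_{C}$ and branched along $\overline{\Delta}:=\Sigma(\Delta_{C})$, so that $\Sigma^{*}\overline{\Delta}=2\Delta_{C}$. Since by construction $E$ has no component equal to $\Delta_{C}$, the divisor $F$ (defined by $\Sigma^{*}F=E$) has no component equal to $\overline{\Delta}$, and the equality $\Sigma^{*}F=E$ holds scheme-theoretically. The only tool needed beyond this is the projection formula $\Sigma^{*}\alpha\cdot\Sigma^{*}\beta=2(\alpha\cdot\beta)$ for divisor classes $\alpha,\beta$ on $Y$, together with the intersection numbers $E^{2}=0$ and $\Delta_{C}\cdot E=12$ computed above.

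First I would read off $F^{2}$ for free: $2F^{2}=(\Sigma^{*}F)^{2}=E^{2}=0$, hence $F^{2}=0$. Next I would compute $K_{Y}$. Because $\Sigma$ is simply ramified along $\Delta_{C}$ one has $K_{C\times C}=\Sigma^{*}K_{Y}+\Delta_{C}$; and since $C$ is elliptic $K_{C\times C}=\CO_{C\times C}$, so $\Sigma^{*}K_{Y}\equiv-\Delta_{C}$ in $\Pic(C\times C)$. Intersecting with $\Sigma^{*}F=E$ then gives $2(F\cdot K_{Y})=\Sigma^{*}F\cdot\Sigma^{*}K_{Y}=-(E\cdot\Delta_{C})=-12$, i.e. $F\cdot K_{Y}=-6$.

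Finally I would feed these two numbers into Riemann--Roch on the smooth surface $Y$, via the structure sequence $0\to\CO_{Y}(-F)\to\CO_{Y}\to\CO_{F}\to 0$:
\[
\chi(Y,\CO_{F})=\chi(\CO_{Y})-\chi(\CO_{Y}(-F))=-\tfrac{1}{2}\bigl(F^{2}+F\cdot K_{Y}\bigr)=-\tfrac{1}{2}(0-6)=3.
\]
(Equivalently one could push the structure sequence of $E$ forward along the finite map $\Sigma$: writing $\Sigma_{*}\CO_{C\times C}=\CO_{Y}\oplus\CL^{-1}$ with $\CL^{\otimes 2}=\CO_{Y}(\overline{\Delta})$, the projection formula yields $\Sigma_{*}\CO_{E}=\CO_{F}\oplus\CL^{-1}|_{F}$, so $0=\chi(\CO_{E})=\chi(\CO_{F})+\chi(\CL^{-1}|_{F})$; since $\overline{\Delta}\cdot F=12$ forces $\deg(\CL^{-1}|_{F})=-6$, Riemann--Roch on the curve $F$ gives $\chi(\CO_{F})=3$ again.) I do not anticipate a genuine obstacle: the only points that deserve a sentence of care are the smoothness of $Y=\Sym^{2}(C)$ and the scheme-theoretic equality $\Sigma^{*}F=E$ (equivalently, that $\overline{\Delta}$ is not a component of $F$), after which everything is purely numerical bookkeeping with intersection numbers.
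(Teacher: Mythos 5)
Your proof is correct and follows essentially the same route as the paper: compute $F^{2}=E^{2}/2=0$ and $F\cdot K_{Hilb_{2}(C)}=-E\cdot\Delta_{C}/2=-6$ via the double cover $\Sigma$ and the ramification formula, then apply Riemann--Roch through the structure sequence of $F$. The alternative pushforward argument you sketch in parentheses is a nice cross-check but not needed.
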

	\begin{proof}
		$F^{2}= \Sigma^{*}(F)\cdot \Sigma^{*}(F)/2= E^{2}/2=0$ by Lemma 3.5. And $F\cdot K_{Hilb_{2}(C)}=E\cdot (K_{C \times C}-\Delta_{C})/2=-E\cdot\Delta_{C}/2=-6$. Thus by the Riemann-Roch Theorem: 
		\begin{equation*}
			\chi(Hilb_{2}(C),\CO_{F})=\chi(\CO_{Hilb_{2}(C)})-\chi(\CO(-F))=-F\cdot (F+K_{Hilb_{2}(C)})/2=3.        
		\end{equation*}
		
	\end{proof}
	Now we are ready to prove Proposition 3.3.
	\begin{proof}[Proof of Proposition 3.3]
		First we show that $m=3$. Assume that $m=1$, then $F$ is a prime divisor. So $\chi(\CO_{F})=1-h^{1}(\CO_{F}) \leqslant 1$, contradicting Lemma 3.8. If $m=2$ then $E=E_{1}+E_{2}$ with $E_{1} \not =E_{2}$, as $d_{j}^{1} \not=d_{j}^{2}$. Then both of them are $S_{2}$-invariant. Otherwise from $E_{1}\cdot E_{2}=0$  we have $E_{1}\cdot\Delta_{C}=E_{2}\cdot\Delta_{C}=0$, contradicting  $E\cdot \Delta_{C}=12$. Thus for each $k$ we can assume $E_{k}=\Sigma^{*}(F_{k})$ for some prime divisor $F_{k}$ on $Hilb_{2}(C)$. And we have $F_{1}\cdot F_{2}=E_{1}\cdot E_{2}/2=0$. But then $\chi(\CO_{F})=\chi(\CO_{F_{1}})+\chi(\CO_{F_{2}}) \leqslant 2$, again a contradiction. Thus $m=3$. And from the proof of Corollary 3.6, we conclude that each $p_{j}^{k}$ is an isomorphism.\par 
		Then we  show that each $E_{k}$ intersects properly $\Delta_{C}$ at four points applying Lemma 3.7. We first show that each $E_{k}$ is $\sigma$-invariant. Assume otherwise, then we can assume  $\sigma(E_{1})=E_{2}$ with $E_{1} \not= E_{2}$, and hence $E_{3}$ is $\sigma$-invariant as $E$ is $\sigma$-invariant. Then there is a prime divisor $F'$(resp. $F_{3}$) on $Hilb_{2}(C)$ such that $\Sigma^{*}(F')=E_{1}+E_{2}$(resp. $\Sigma^{*}(F_{3})=E_{3}$), with $F'+F_{3}=F$. Moreover $F' \cdot F_{3}=(E_{1}+E_{2})\cdot E_{3}/2=0$. Thus $\chi(\CO_{F})=\chi(\CO_{F_{3}})+\chi(\CO_{F'}) \leqslant 2$, a contradiction. Now we can assume for each $k$ that there is a prime divisor $F_{k}$ such that $\Sigma^{*}(F_{k})=E_{k}$, and $F=F_{1}+F_{2}+F_{3}$. Then by similar calculations as in Lemma 3.8, we conclude that $E_{k}\cdot \Delta_{C}/4=\chi(\CO_{F_{k}}) \leqslant 1$ for each $k$.  Thus from $\Delta_{C}\cdot E=12$ we get $\Delta_{C}\cdot E_{k}=4$, and $\chi(\CO_{F_{k}})=1$ for each $k$.  In particular this implies that $F_{k} \cong \mathbb{P}^{1}$ for each $k$. Then $\kappa_{k,j}:(p_{j}^{k})^{-1}\circ  S_{|_{E_{k}}}: C \rightarrow F_{k}$ is a ramified double cover of $\mathbb{P}^{1}$ for each $k$.  Denote by $L_{k}=\kappa_{k,j}^{*}(\CO(1))$. Then one easily checks that $F_{k}=\rho_{1}(L_{k})$ for each $k$, and thus $F=\rho_{3}(L_{1}+L_{2}+L_{3})$. The proof is complete.
	\end{proof}
\subsection{Relative setting}
    Throughout this subsection $G$ is supposed to be of type $F_{4},E_{6},E_{7}$ or $E_{8}$. Let $\CU_{G} \subseteq Gr(2,W_{G})$ be the open subset of nondegenerate planes. Recall from (\ref{diag1}) that, every nondegenerate plane $[U] \in \CU_{G}$ is associated with a datum $(C_{U},L_{U},\kappa_{U}: C_{U} \rightarrow S_{G})$. As $U$ varies in $\CU_{G}$, it then defines a family of data over $\CU_{G}$. We formulate it as follows.\par 
    Recall that $\pi: \CT \rightarrow Hilb_{2}(S)$ is the universal family of subschemes of length two on $S$. And $v$ is the evaluation map to $S$. From Theorem \ref{proj_geom} (3),  one can easily see that there is an $H_{G}$-equivariant morphism $\gamma_{G}:\mathbb{P}W_{G} \backslash Sing(TS_{G}) \rightarrow Hilb_{2}(S_{G})$, such that for every nondegenerate plane $U$, the morphism  $\psi_{U}$ defined in (\ref{diag1}) coincides with the composition of $\gamma_{G}$ and the inclusion of $\mathbb{P}U$ in $\mathbb{P}W_{G}$. 
Denote $\eta_{G}: \CP_{G} \rightarrow \CU_{G}$ to be the universal bundle of lines in $\mathbb{P} W_{G}$. Let $\CO_{\CP_{G}}(1)$ be the tautological line bundle of $\CP_{G}$. And let $\mu_{G}: \CP_{G} \rightarrow \mathbb{P}W_{G}$ be the evaluation map.  Then  by our definition of nondegenerate planes, $Im(\mu_{G})$ is included in $ \mathbb{P}W_{G}\backslash Sing(TS_{G})$. Thus one can define a similar diagram of morphisms as (\ref{diag1}):
\begin{equation}\label{Diag1}
	\begin{tikzcd}
		\CC_{G} \arrow[r,"\pi^{*}\psi_{G}"] \arrow[d,"\pi_{G}"]
		& \mathcal{T} \arrow[d,"\pi"] \arrow[r,"v"] & S\\
		\CP_{G} \arrow[r,"\psi_{G}"]
		& Hilb_{2}(S),
	\end{tikzcd}    
\end{equation}
where $\psi_{G}=\gamma_{G} \circ \mu_{G}$, and $\pi_{G}=\psi_{G}^{*}(\pi)$. We denote $\CL_{G}=\pi_{G}^{*}(\CO_{\CP_{G}}(1))$,  $\kappa_{G}=v \circ \pi^{*}\psi_{G}$, and  $K_{G}=\eta_{G} \circ \pi_{G}$. Then one sees that $K_{G}$ is a smooth proper family of curves of genus one, $\CL_{G}$ is a family of degree two line bundles on $\CC_{G}/\CU_{G}$. 
The datum $(K_{G}: \CC_{G } \rightarrow \CU_{G}, \CL_{G},\kappa_{G})$ defines our desired family of nondegenerate morphisms from polarized curves of genus one to $S_{G}$. \par 
Next we aim to formulate our results in Section 3.1 under this relative setting. First we fix some notations. For any $[U] \in \CU_{G}$, we denote by  $E_{U}$ the non-diagonal part of $(\kappa_{U} \times \kappa_{U})^{*}(w)$, $F_{U}$ the effective divisor on $Hilb_{2}(C_{U})$ such that $\Sigma_{U}^{*}(F_{U})=E_{U}$, $\rho_{m,U}=\rho_{m}$ the morphism defined before for $C=C_{U}$, and $R_{U} \in Hilb_{3}(Jac^{2}(C_{U}))$ such that $\rho_{3,U}(R_{U})=F_{U}$.\par 
  As $\CC_{G}/\CU_{G}$ is a smooth projective family of curves of genus one, the functor $Pic_{(\CC_{G}/\CU_{G})(\text{\'{e}tale})}$ is represented by a scheme $Pic_{\CC_{G}/\CU_{G}}$\cite[Theorem 9.4.8(1)]{FGA_explained}. Moreover for any $k \in \BZ$, denote $Pic_{(\CC_{G}/\CU_{G})(\text{\'{e}tale})}^{k}$ to be the $\text{\'{e}tale}$ subsheaf associated to the presheaf whose $T$-point for a scheme $T$ over $\CU_{G}$ is represented by line bundles on $\CC_{G} \times_{\CU_{G}} T/ T$ of degree $k$. Then from the proof of \cite[Theorem 9.4.8(2)]{FGA_explained}, $Pic_{(\CC_{G}/\CU_{G})(\text{\'{e}tale})}^{k}$ is an open subsheaf represented by an open subscheme $Pic_{\CC_{G}/\CU_{G}}^{k} \subseteq Pic_{\CC_{G}/\CU_{G}}$ which is quasi-projective over $\CU_{G}$, and $Pic_{\CC_{G}/\CU_{G}}$ is the disjoint union of $Pic_{\CC_{G}/\CU_{G}}^{k}$ for $k \in \BZ$.
\begin{lem}
	For any $k \in \BZ$, $Pic_{\CC_{G}/\CU_{G}}^{k}$ is a smooth projective  family of curves of genus one  over $\CU_{G}$.
\end{lem}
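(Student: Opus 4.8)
The plan is to verify the three properties not already recorded — smoothness, properness, and that the geometric fibers are smooth connected curves of genus one — and then to combine properness with the quasi-projectivity over $\CU_G$ noted above to conclude projectivity.

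\emph{Smoothness.} Since $K_G:\CC_G\to\CU_G$ is a smooth projective family of curves, the relative Picard scheme $Pic_{\CC_G/\CU_G}$ is smooth over $\CU_G$: for $[U]\in\CU_G$ with fiber $C=C_U$, the tangent space to $Pic_{\CC_G/\CU_G}$ along $Pic(C)$ is $H^1(C,\CO_C)$, which is one-dimensional, and the obstruction to smoothness lies in $H^2(C,\CO_C)=0$ because $\dim C=1$ (see \cite[\S 9.5]{FGA_explained}). As $Pic^k_{\CC_G/\CU_G}$ is an open subscheme, it is smooth over $\CU_G$ of relative dimension one; in particular each geometric fiber is a smooth curve. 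It is connected, and of genus one: over an algebraically closed field, $Pic^k$ of a smooth projective genus-one curve $C$ is a torsor under the elliptic curve $Pic^0(C)$, so it has a rational point and is (non-canonically) isomorphic to $Pic^0(C)$, hence a smooth connected projective curve of genus one — exactly the fact already used for $Jac^2(C)$ in Section 3.1. In particular the geometric fibers are non-empty, so the morphism is surjective. (Alternatively, once properness is known, the genus of the fibers of the flat family $Pic^k_{\CC_G/\CU_G}\to\CU_G$ is locally constant and may be computed on one fiber.)

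\emph{Properness.} It remains to verify the existence part of the valuative criterion for the separated, finite-type morphism $Pic^k_{\CC_G/\CU_G}\to\CU_G$, for which it suffices to allow extensions of the base trait. Let $R$ be a DVR with fraction field $K$, let $\Spec R\to\CU_G$ be a morphism, and let a $K$-point of $Pic^k_{\CC_G/\CU_G}$ lie over $\Spec K\to\Spec R\to\CU_G$; after a finite separable extension of $K$ this point is represented by a line bundle $M$ of degree $k$ on the generic fiber $\mathcal{C}_K$ of the pullback $\mathcal{C}_R\to\Spec R$ of $\CC_G$. Now $\mathcal{C}_R$ is smooth over $R$, hence a regular scheme, so writing $M=\CO_{\mathcal{C}_K}(D)$ and taking the Zariski closure $\overline{D}$ of $D$ in $\mathcal{C}_R$ produces a Cartier divisor whose associated line bundle extends $M$; since $\mathcal{C}_R\to\Spec R$ is flat and proper, its degree on the special fiber is again $k$. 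This yields the required point over (an extension of) $R$, and the argument is uniform in $k\in\BZ$. Being separated, of finite type, quasi-projective over $\CU_G$ and proper over $\CU_G$, the scheme $Pic^k_{\CC_G/\CU_G}$ is projective over $\CU_G$.

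Putting the items together, $Pic^k_{\CC_G/\CU_G}\to\CU_G$ is a smooth projective family of curves of genus one. The only step that needs genuine care is the properness: it reduces to extending a degree-$k$ line bundle across the closed point of a trait, which rests on the regularity of the total space $\mathcal{C}_R$ of the pulled-back family; everything else is deformation theory on curves (where the obstruction group $H^2$ vanishes) together with the already recorded fact that the Picard variety of a fixed genus-one curve is again a smooth projective curve of genus one.
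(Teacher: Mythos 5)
Your proposal is correct, but it reaches projectivity by a genuinely different route than the paper. For smoothness you and the paper do the same thing: invoke the deformation theory of the relative Picard scheme for a smooth family of curves (\cite[\S 9.5]{FGA_explained}), where the obstruction space $H^{2}(C_{U},\CO_{C_{U}})$ vanishes; and both arguments identify the fibers over $[U]$ with $Jac^{k}(C_{U})$ (you add the torsor-under-$Pic^{0}$ explanation, which the paper leaves implicit). The difference is in properness/projectivity: the paper simply quotes the general fact that for a smooth projective family of irreducible varieties $X/S$ any finite-type closed subscheme of $Pic_{X/S}$ is projective over $S$ (\cite[Exercise 9.5.7]{FGA_explained}), using that $Pic^{k}_{\CC_{G}/\CU_{G}}$ is a finite-type open-and-closed piece and is already known to be quasi-projective, whereas you prove properness directly by the valuative criterion (in the version allowing extensions of the trait): after a finite separable extension a $K$-point is an honest line bundle, you write it as $\CO(D)$, take the Zariski closure of $D$ in the regular surface $\CC_{G}\times_{\CU_{G}}\Spec R$, and use constancy of the degree in the flat proper family, then combine properness with quasi-projectivity to get projectivity. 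Your argument is longer but self-contained and makes explicit where regularity of the total space enters; the paper's citation is shorter but relies on the quoted exercise (and, implicitly, on $Pic^{k}$ being closed as well as open in $Pic_{\CC_{G}/\CU_{G}}$), a point your route sidesteps entirely. The only blemishes in your write-up are notational (the line bundle after base change lives on $\CC_{G}\times_{\CU_{G}}\Spec K'$, and one must pick a DVR of $K'$ dominating $R$), not mathematical gaps.
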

\begin{proof}
	The fact that $Pic_{\CC_{G}/\CU_{G}}$ is smooth over $\CU_{G}$ follows from \cite[Proposition 9.5.19]{FGA_explained} and that $\CC_{G}/\CU_{G}$ is a smooth family of curves. It is projective over $\CU_{G}$ by a general fact that for a smooth projective family of irreducible varieties $X/S$, any closed subscheme of $Pic_{X/S}$ is projective over $S$ if it is of finite type(cf. \cite[Exercise 9.5.7]{FGA_explained}). Finally for any point $[U] \in \CU_{G}$,  $Pic_{\CC_{G}/\CU_{G}}^{k}|_{[U]} \cong Jac^{k}(C_{U})$ for each $k \in \BZ$, which is a smooth projective curve of genus one. Thus it follows that $Pic_{\CC_{G}/\CU_{G}}^{k}$ is a smooth projective family of curves of genus one.
\end{proof}
Denote by $S^{m}_{\CU_{G}}(Pic^{2}_{\CC_{G}/\CU_{G}})$ the family of $m$  line bundles of degree two on $\CC_{G}/\CU_{G}$, which is defined as the quotient of the permutation action of $S_{m}$ on the m-tuple relative product of $Pic^{2}_{\CC_{G}/\CU_{G}}/\CU_{G}$. Then one easily sees that it is also smooth projective over $\CU_{G}$. Over any point $[U] \in \CU_{G}$, its fiber is isomorphic to $Hilb_{m}(Jac^{2}(C_{U}))$ via the Hilbert-Chow morphism. We aim to prove the following:
\begin{prop}\label{family}
	There is a section $\CR_{G}: \CU_{G} \rightarrow S^{3}_{\CU_{G}}(Pic^{2}_{\CC_{G}/\CU_{G}})$, such that for any $[U] \in \CU_{G}$, $\CR_{G}([U])=R_{U}$.
\end{prop}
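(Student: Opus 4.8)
The plan is to globalize the construction of Section 3.1 in the language of relative effective Cartier divisors, using three facts: $\CU_{G}$ is an irreducible reduced variety (it is open and nonempty in the Grassmannian); $\CC_{G}\times_{\CU_{G}}\CC_{G}$ is smooth (as $\CU_{G}$ and $\CC_{G}/\CU_{G}$ are); and all the moduli schemes occurring here ($\mathrm{Div}$-schemes for projective flat families, $Pic^{k}_{\CC_{G}/\CU_{G}}$, the relative $Hilb_{2}$, the relative symmetric cube) are separated over $\CU_{G}$, so morphisms into them from the reduced $\CU_{G}$ are determined by their points.

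\textbf{Step 1: a relative non-diagonal divisor.} Viewing $w$ as a section of $\CO_{S_{G}}(1)\boxtimes\CO_{S_{G}}(1)$ and pulling it back along $\kappa_{G}\times_{\CU_{G}}\kappa_{G}\colon Y:=\CC_{G}\times_{\CU_{G}}\CC_{G}\to S_{G}\times S_{G}$ gives a section $s_{G}$ of a line bundle on $Y$ whose restriction to the fiber $Y_{[U]}=C_{U}\times C_{U}$ is $(\kappa_{U}\times\kappa_{U})^{*}(w)$, which is nonzero by Proposition \ref{3_line}. Hence its zero scheme $D_{G}:=[s_{G}]$ is a relative effective Cartier divisor on $Y/\CU_{G}$ with $D_{G}|_{[U]}=3\Delta_{C_{U}}+E_{U}$ for every $[U]$. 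Since the relative diagonal $\Delta:=\Delta_{\CC_{G}/\CU_{G}}\subseteq Y$ is an irreducible divisor and contains each $\Delta_{C_{U}}$, Proposition \ref{3_line} forces $\Delta\subseteq D_{G}$, and the vanishing order of $s_{G}$ along $\Delta$, computed on a fiber transverse to $\Delta$, equals $\mathrm{ord}_{\Delta_{C_{U}}}(D_{U})=3$. Thus $D_{G}=3\Delta+E_{G}$ for an effective Cartier divisor $E_{G}$ on the smooth scheme $Y$ with no component equal to $\Delta$; restricting to $Y_{[U]}$ and using the uniqueness of the decomposition in Proposition \ref{3_line} gives $E_{G}|_{[U]}=E_{U}$. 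In particular $E_{G}$ contains no fiber of $Y\to\CU_{G}$ and its fibers are one-dimensional, so by miracle flatness ($\CU_{G}$ regular, $E_{G}$ Cohen--Macaulay of the expected dimension) $E_{G}\to\CU_{G}$ is flat, i.e.\ $E_{G}$ is a relative effective Cartier divisor on $Y/\CU_{G}$ with fibers $E_{U}$.

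\textbf{Step 2: descent along the relative symmetric square.} Let $\Sigma_{G}\colon Y\to Hilb_{2}(\CC_{G}/\CU_{G})$ be the relative $S_{2}$-quotient, a finite flat degree-two cover branched along $\Delta$. The divisor $E_{G}$ is $S_{2}$-invariant: $E_{G}$ and $\sigma^{*}E_{G}$ define two morphisms $\CU_{G}\to\mathrm{Div}_{Y/\CU_{G}}$ agreeing on every point (each $E_{U}$ is $S_{2}$-invariant by Proposition \ref{3_line}), and since $\mathrm{Div}_{Y/\CU_{G}}$ is separated and $\CU_{G}$ is reduced they coincide. As $E_{G}$ contains no component of the branch locus $\Delta$, descent along the \'etale double cover $\Sigma_{G}$ over $Y\setminus\Delta$, followed by extension over the codimension-two complement using the smoothness of $Hilb_{2}(\CC_{G}/\CU_{G})$, produces a relative effective Cartier divisor $F_{G}$ on $Hilb_{2}(\CC_{G}/\CU_{G})/\CU_{G}$ with $\Sigma_{G}^{*}F_{G}=E_{G}$ and $F_{G}|_{[U]}=F_{U}$ --- the relative analogue of the descent statement used for the absolute case in Section 3.1. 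Finally, using that $\CL_{G}$ provides a section of $Pic^{2}_{\CC_{G}/\CU_{G}}$, the relative Poincar\'e bundle exists, so $\CP:=Pic^{2}_{\CC_{G}/\CU_{G}}$ (smooth projective over $\CU_{G}$ by the preceding lemma) carries a rank-two bundle $\CE_{G}$ with $Hilb_{2}(\CC_{G}/\CU_{G})\cong\BP(\CE_{G})$ over $\CP$, the relative version of $\CP^{[2]}=\BP((\pi_{2})_{*}\CL^{[2]})$; write $\theta\colon\BP(\CE_{G})\to\CP$ for the ruling, so that $\rho_{3,U}(L_{1}+L_{2}+L_{3})=|L_{1}|+|L_{2}|+|L_{3}|$ is a sum of three fibers of $\theta$.

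\textbf{Step 3: from $F_{G}$ to the section $\CR_{G}$, and the main obstacle.} By Proposition \ref{3_line}(ii), $F_{G}|_{[U]}=\rho_{3,U}(R_{U})$ is a sum of three $\theta$-fibers on each fiber over $\CU_{G}$; hence $\CO_{\BP(\CE_{G})}(F_{G})$ has degree zero on every fiber of $\theta$ (the degree is locally constant on the connected $\CP$), so $\CO_{\BP(\CE_{G})}(F_{G})\cong\theta^{*}M$ with $M:=\theta_{*}\CO_{\BP(\CE_{G})}(F_{G})$ a line bundle on $\CP$ of relative degree three. Since $\theta_{*}\CO_{\BP(\CE_{G})}=\CO_{\CP}$ compatibly with base change, the section cutting out $F_{G}$ descends to a section of $M$, i.e.\ a relative effective Cartier divisor $G_{G}$ of relative degree three on $\CP/\CU_{G}$ with $\theta^{*}G_{G}=F_{G}$; restricting fiberwise and unwinding shows $G_{G}|_{[U]}=R_{U}$. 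As $\CP/\CU_{G}$ is a smooth family of genus-one curves, $G_{G}$ is a $\CU_{G}$-point of $Hilb_{3}(\CP/\CU_{G})=S^{3}_{\CU_{G}}(Pic^{2}_{\CC_{G}/\CU_{G}})$, and this is the desired section $\CR_{G}$, with $\CR_{G}([U])=R_{U}$. The main obstacle is Step 1 together with the invariance claim in Step 2: one must verify that subtracting $3\Delta$ from $D_{G}$ on the total space leaves an \emph{effective} divisor still \emph{flat} over $\CU_{G}$ --- which relies essentially on Proposition \ref{3_line} giving multiplicity exactly $3$ along the diagonal at every point, with no jumps --- and that $E_{G}$ is genuinely $S_{2}$-invariant rather than merely fiberwise so; both are handled by the separatedness/reducedness principle above, and once they are in place the passage to $\CR_{G}$ is formal.
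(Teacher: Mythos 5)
Your proposal is correct in substance, and its first two steps (producing the relative divisor $3\Delta+\CE_{G}$ from the pulled-back form $w$, checking flatness fiberwise, and descending the $S_{2}$-invariant part to a relative divisor $\CF_{G}$ on the relative symmetric square) coincide with what the paper does in Corollary 3.11, with only cosmetic differences (you use miracle flatness and a rigidity argument for $S_{2}$-invariance where the paper invokes the fiberwise criterion for relative effective divisors and the antisymmetry of $w$). The final step, however, is a genuinely different route. The paper first proves that the relative analogue $\Theta_{G,3}\colon S^{3}_{\CU_{G}}(Pic^{2}_{\CC_{G}/\CU_{G}})\to Div_{S^{2}_{\CU_{G}}(\CC_{G})/\CU_{G}}$ of $\rho_{3}$ is a closed embedding (Corollary 3.12, resting on the normal-bundle computation of Lemma 3.1 plus properness), and then factors the classifying section $\tau_{G}$ of $\CF_{G}$ through this closed embedding using that $\CU_{G}$ is reduced and that the image of $\tau_{G}$ lies set-theoretically in the image of $\Theta_{G,3}$ by Proposition 3.3. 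You instead push $\CF_{G}$ down along the relative Abel--Jacobi $\BP^{1}$-fibration $\theta\colon Hilb_{2}(\CC_{G}/\CU_{G})\to Pic^{2}_{\CC_{G}/\CU_{G}}$ by a see-saw argument (degree zero on $\theta$-fibers, hence $\CO(\CF_{G})\cong\theta^{*}M$ and the section descends), obtaining directly a relative degree-three divisor on the family of Jacobians, i.e.\ the section $\CR_{G}$. Your route avoids Lemma 3.1/Corollary 3.12 entirely in the relative setting (you still use Proposition 3.3 fiberwise, which is unavoidable), at the price of the fibration and base-change machinery; the paper's route avoids the descent along $\theta$ and yields the closed embedding $\Theta_{G,3}$ as a by-product. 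Two justifications in your write-up should be repaired, though neither affects the strategy: a section of $Pic^{2}_{\CC_{G}/\CU_{G}}\to\CU_{G}$ does \emph{not} imply the existence of a relative Poincar\'e bundle (that criterion requires a section of $\CC_{G}\to\CU_{G}$), but you do not need the $\BP(\CE_{G})$-structure at all --- the Abel--Jacobi morphism is defined by the line bundle of the universal divisor on $\CC_{G}\times_{\CU_{G}}Hilb_{2}(\CC_{G}/\CU_{G})$, and cohomology-and-base-change for this $\BP^{1}$-fibration suffices for the see-saw step; and in Step 2 the complement you extend across is the image of $\Delta$, which is a \emph{divisor}, not of codimension two, so the extension should be phrased as taking the closure of the descended divisor (or, as in the paper, by the direct statement that an $S_{2}$-invariant effective divisor with no component equal to $\Delta$ is pulled back from the quotient), after which $\Sigma_{G}^{*}\CF_{G}=\CE_{G}$ follows by comparing coefficients along $\Delta$.
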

Consider the action of $S_{2}$ on $\CC_{G} \times_{\CU_{G}} \CC_{G}$ given by permutations of factors. We define $S_{G}: \CC_{G} \times \CC_{G} \rightarrow S^{2}_{\CU_{G}}(\CC_{G})\dot=(\CC_{G} \times_{\CU_{G}} \CC_{G})//S_{2}$ to be its GIT quotient map. Then  we can get a similar diagram as  (\ref{diag2}):
	\begin{equation}\label{Diag2}
	\begin{tikzcd}
		S \times S  & \CC_{G} \times \CC_{G} \arrow[r,"p_{j,G}"] \arrow[d,"\Sigma_{G}"] \arrow[l,"\kappa_{G} \times \kappa_{G} "]
		& \CC_{G} \arrow[d,"K_{G}"] \arrow[r,"\kappa_{G}"]  & S \\
		& S^{2}_{\CU_{G}}(\CC_{G}) \arrow[r, "q_{G}"]& \CU_{G}
	\end{tikzcd}    
\end{equation}
By Propostion 3.3, $(\kappa_{G} \times \kappa_{G})^{*}(\omega) \not=0$. We write $[(\kappa_{G} \times \kappa_{G})^{*}(\omega)]=a\Delta_{\CC_{G}}+\CE_{G}$, where $\Delta_{\CC_{G}}$ is the diagonal, and $\CE_{G}$ is an effective divisor whose component does not contain $\Delta_{\CC_{G}}$.  Recall that for a scheme $X/S$, a relative effective divisor on $X/S$ is an effective divisor on $X$ which is flat over $S$.
\begin{cor}
We have	a=3. Moreover $\CE_{G}$ is a relative effective divisor on $(\CC_{G} \times \CC_{G})/\CU_{G}$, and there is a relative effective divisor $\CF_{G}$ on $S^{2}_{\CU_{G}}(\CC_{G}) / \CU_{G}$ such that $\CE_{G}=\Sigma_{G}^{*}(\CF_{G})$.  For any $[U] \in \CU_{G}$, we have $\CE_{G}|_{[U]}=E_{U}$ and $\CF_{G}|_{[U]}=F_{U}$. 
	\end{cor}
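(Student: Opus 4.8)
The plan is to deduce the Corollary by relativising Proposition \ref{3_line} and the lemmas preceding it over $\CU_{G}$; no new geometric input is needed, and the work consists of combining those fibrewise statements with standard facts about effective Cartier divisors in smooth morphisms. Write $X=\CC_{G}\times_{\CU_{G}}\CC_{G}$ and let $s=(\kappa_{G}\times\kappa_{G})^{*}(\omega)$, viewed as a section of the line bundle $(\kappa_{G}\times\kappa_{G})^{*}(\CO_{S}(1)\boxtimes\CO_{S}(1))$ on $X$. Since $\CU_{G}=\{\sigma_{12}\not=0\}$ is a nonempty open subscheme of a Grassmannian it is smooth and irreducible; as $\CC_{G}/\CU_{G}$ is a smooth proper family of curves of genus one, $X$ is smooth, irreducible and locally factorial, and $\Delta_{\CC_{G}}\cong\CC_{G}$ is an irreducible smooth divisor flat over $\CU_{G}$ which contains no fibre of $X\to\CU_{G}$. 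By Proposition \ref{3_line}, $s$ restricts to a nonzero section on every fibre $C_{[U]}\times C_{[U]}$, so $[s]$ is an effective Cartier divisor containing no fibre; hence in $[s]=a\Delta_{\CC_{G}}+\CE_{G}$ the term $\CE_{G}$ is an effective divisor (automatically Cartier, by local factoriality) none of whose components is a whole fibre.

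First I would simultaneously pin down $a$ and establish relative flatness. Since $s$ is nowhere zero on the fibres of $X\to\CU_{G}$ (Proposition \ref{3_line}), the effective Cartier divisors $[s]$ and $\CE_{G}$ contain no fibre and are therefore flat over $\CU_{G}$, i.e. relative effective divisors on $(\CC_{G}\times\CC_{G})/\CU_{G}$. Restricting to a closed point $[U]$ gives $[s]|_{[U]}=[(\kappa_{U}\times\kappa_{U})^{*}(w)]=3\Delta_{C_{[U]}}+E_{U}$ by Proposition \ref{3_line}, while $\Delta_{\CC_{G}}|_{[U]}=\Delta_{C_{[U]}}$. Comparing the two expressions $[s]|_{[U]}=a\Delta_{C_{[U]}}+\CE_{G}|_{[U]}$ and $3\Delta_{C_{[U]}}+E_{U}$, and using that by Proposition \ref{3_line} the diagonal occurs in $[(\kappa_{U}\times\kappa_{U})^{*}(w)]$ with multiplicity exactly $3$ (so that $\Delta_{C_{[U]}}$ is a component of neither $E_{U}$ nor $\CE_{G}|_{[U]}$), yields $a=3$ and $\CE_{G}|_{[U]}=E_{U}$ for every $[U]\in\CU_{G}$. (Alternatively one computes $a$ as the order of vanishing of $s$ at the generic point of $\Delta_{\CC_{G}}$, which sits over the generic point of $\CU_{G}$, and invokes the form of Proposition \ref{3_line} over the function field of $\CU_{G}$.)

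Next I would produce $\CF_{G}$ by descent along the double cover $\Sigma_{G}$. Skew-symmetry of $w$ gives $\sigma^{*}s=-s$ for the involution $\sigma$ exchanging the two factors, so $[s]$ is $\sigma$-invariant; as $\Delta_{\CC_{G}}$ is $\sigma$-invariant, so is $\CE_{G}$, and by construction $\Delta_{\CC_{G}}$ — the ramification divisor of $\Sigma_{G}$ — is not a component of $\CE_{G}$. On $X\backslash\Delta_{\CC_{G}}$ the morphism $\Sigma_{G}$ is a free (hence \'etale) $S_{2}$-quotient, so $\CE_{G}$ descends there to an effective Cartier divisor; its closure inside $S^{2}_{\CU_{G}}(\CC_{G})$, which is smooth over $\CU_{G}$ and hence locally factorial, is an effective Cartier divisor $\CF_{G}$. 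Since $\CF_{G}$ does not contain the branch divisor, $\Sigma_{G}^{*}\CF_{G}$ picks up no multiplicity along $\Delta_{\CC_{G}}$, so $\Sigma_{G}^{*}\CF_{G}$ and $\CE_{G}$ agree away from $\Delta_{\CC_{G}}$ and neither contains $\Delta_{\CC_{G}}$, whence $\Sigma_{G}^{*}\CF_{G}=\CE_{G}$. Finiteness and surjectivity of $\Sigma_{G}$ give that $\CF_{G}$ is effective and contains no fibre of $S^{2}_{\CU_{G}}(\CC_{G})\to\CU_{G}$ (otherwise $\CE_{G}$ would), so $\CF_{G}$ is a relative effective divisor on $S^{2}_{\CU_{G}}(\CC_{G})/\CU_{G}$; finally, restricting $\Sigma_{G}^{*}\CF_{G}=\CE_{G}$ to a fibre yields $\Sigma_{U}^{*}(\CF_{G}|_{[U]})=E_{U}=\Sigma_{U}^{*}(F_{U})$, hence $\CF_{G}|_{[U]}=F_{U}$ by injectivity of $\Sigma_{U}^{*}$ on divisors.

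I expect the main obstacle to be purely one of bookkeeping: one must verify that restriction to fibres commutes with the divisor arithmetic used above — legitimate precisely because $[s]$, $\Delta_{\CC_{G}}$ and $\CE_{G}$ are flat over $\CU_{G}$ and contain no fibre — and that the descent step behaves correctly across the ramification locus $\Delta_{\CC_{G}}$ of $\Sigma_{G}$. Neither requires more than the normality and local factoriality of the ambient spaces together with the fibrewise results of Section 3.1, so no genuinely new difficulty arises.
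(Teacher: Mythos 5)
Your overall route is the same as the paper's: use the fibrewise statement of Proposition \ref{3_line} to control $[s]=(\kappa_{G}\times\kappa_{G})^{*}(w)$, deduce flatness of $\CE_{G}$ from the fact that its fibre restrictions are Cartier divisors (the paper cites \cite[Lemma 9.3.4(iii)]{FGA_explained} for exactly this), and descend the $S_{2}$-invariant divisor $\CE_{G}$, which does not contain the ramification divisor $\Delta_{\CC_{G}}$, to $\CF_{G}$ on $S^{2}_{\CU_{G}}(\CC_{G})$. The descent step and the identification $\CF_{G}|_{[U]}=F_{U}$ are fine and in fact spelled out in more detail than in the paper.

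There is, however, one step whose justification as written does not hold up: at a fixed $[U]$ you compare $[s]|_{[U]}=a\Delta_{C_{U}}+\CE_{G}|_{[U]}$ with $3\Delta_{C_{U}}+E_{U}$ and assert that, because the diagonal occurs with multiplicity exactly $3$ in $[s]|_{[U]}$, the curve $\Delta_{C_{U}}$ is a component of neither $E_{U}$ nor $\CE_{G}|_{[U]}$. For $E_{U}$ this is part of Proposition \ref{3_line}, but for $\CE_{G}|_{[U]}$ it is a non sequitur: the multiplicity statement only gives $a+\mathrm{mult}_{\Delta_{C_{U}}}(\CE_{G}|_{[U]})=3$, which a priori allows $a<3$ with every fibre restriction of $\CE_{G}$ acquiring a diagonal component, so the inference is circular (you need $a=3$ to rule this out, and you need to rule it out to get $a=3$). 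The missing ingredient — and the first sentence of the paper's proof — is the existence of at least one $[U]$ with $\mathrm{mult}_{\Delta_{C_{U}}}(\CE_{G}|_{[U]})=0$: if no such $[U]$ existed, then since $\CE_{G}$ has finitely many components, a single component $Z\neq\Delta_{\CC_{G}}$ would contain $\Delta_{C_{U}}$ for a dense set of $[U]$, hence would contain $\Delta_{\CC_{G}}$ and equal it, contradicting the definition of $\CE_{G}$. At such a point one gets $a=3$, and then the multiplicity identity at every other point forces $\CE_{G}|_{[U]}=E_{U}$ everywhere. Your parenthetical alternative (computing $a$ as the order of vanishing of $s$ along the generic point of $\Delta_{\CC_{G}}$) would also close the gap, but only after extending Proposition \ref{3_line} to the geometric generic fibre over $\overline{\BC(\CU_{G})}$ or running a specialization argument, neither of which is literally available from Section 3.1 as proved over $\BC$; the closure argument above is the cheaper fix and is what the paper uses.
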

\begin{proof}
	As $\Delta_{\CC_{G}}$ is not contained in the component of $\CE_{G}$, there exists a nondgenerate plane $[U] \in \CC_{G}$ such that $\CE_{G}|_{[U]}$ does not contain $\Delta_{C_{U}}$.  Then it follows from Proposition 3.3 that $a=3$. And for any $[U] \in \CU_{G}$, one has $\CE_{G}|_{[U]}=E_{U}$, which is an effective Cartier divisor on $C_{U} \times C_{U}$. Thus $\CE_{G}$ is a relative effective divisor on $(\CC_{G} \times \CC_{G})/\CU_{G}$ \cite[Lemma 9.3.4(iii)]{FGA_explained}. Moreover as $\CE_{G}$ is obviously $S_{2}$-invairant, there is is an  effective divisor $\CF_{G}$ on $S^{2}_{\CU_{G}}(\CC_{G})$ such that $\CE_{G}=S_{G}^{*}(\CF_{G})$. Then for each $[U] \in \CU_{G}$, $\CF_{G}|_{[U]}=F_{U}$. And thus  $\CF_{G}$ is a relatively effective divisor on $S^{2}_{\CU_{G}}(\CC_{G}) / \CU_{G}$.
\end{proof}
   
   Note that there is a natural morphism $\Theta_{G,m}: S^{m}_{\CU_{G}}(Pic^{2}_{\CC_{G}/\CU_{G}}) \rightarrow Div_{S^{2}_{\CU_{G}}(\CC_{G})/\CU_{G}}$ for each $m \geqslant 1$, such that for any $[U] \in \CU_{G}$ we have $\Theta_{G,m}|_{[U]}=\rho_{m,U}$. Applying Lemma 3.1 we get:
   \begin{cor}
   	For each $m \geqslant 1$, $\Theta_{G,m}$ is a closed embedding.
   	\end{cor}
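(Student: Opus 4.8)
The plan is to promote the absolute statement (Lemma 3.1, that $\rho_m$ is a closed embedding) to the relative setting by a fibrewise argument. Concretely, I would show that $\Theta_{G,m}$ is proper, and then deduce that it is a closed immersion by checking that the comorphism $\mathcal{O}_{Div_{S^2_{\CU_G}(\CC_G)/\CU_G}}\to (\Theta_{G,m})_*\mathcal{O}$ is surjective after restriction to every closed point of the target.

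First, $\Theta_{G,m}$ is proper. The source $S^3_{\CU_G}(Pic^2_{\CC_G/\CU_G})$ (and likewise $S^m_{\CU_G}(Pic^2_{\CC_G/\CU_G})$) is projective over $\CU_G$, being the $S_m$-quotient of the $m$-fold relative product of $Pic^2_{\CC_G/\CU_G}$, which was shown above to be smooth and projective over $\CU_G$; the target $Div_{S^2_{\CU_G}(\CC_G)/\CU_G}$ is separated over $\CU_G$ (relative effective Cartier divisors form a separated functor). A $\CU_G$-morphism from a scheme proper over $\CU_G$ to a scheme separated over $\CU_G$ is proper, so $\Theta_{G,m}$ is proper.

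Next I would reduce to closed fibres. By construction $\Theta_{G,m}$ restricts over each closed point $[U]\in\CU_G$ to $\rho_{m,U}$, which is a closed immersion by Lemma 3.1; in particular its fibres are at most single reduced points. Since closed points are dense in the source, every closed point of the source maps to a closed point of $\CU_G$ (finite-type $\BC$-schemes), and fibre dimension is upper semicontinuous, all fibres of $\Theta_{G,m}$ are finite; a proper quasi-finite morphism is finite, so $\Theta_{G,m}$ is finite. Now a finite morphism $f\colon X\to Y$ is a closed immersion precisely when $\mathcal{O}_Y\to f_*\mathcal{O}_X$ is surjective, and by coherence of the cokernel and Nakayama it is enough to verify this after $\otimes\,\kappa(y)$ for every closed point $y$ of $Y=Div_{S^2_{\CU_G}(\CC_G)/\CU_G}$. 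For such a $y$, lying over a closed point $[U]$ of $\CU_G$, affineness of $\Theta_{G,m}$ gives $(\Theta_{G,m})_*\mathcal{O}\otimes\kappa(y)\cong H^0\!\big(\Theta_{G,m}^{-1}(y),\mathcal{O}\big)=H^0\!\big(\rho_{m,U}^{-1}(y),\mathcal{O}\big)$, which is $\kappa(y)$ if $y$ lies in the image of $\rho_{m,U}$ and $0$ otherwise, because $\rho_{m,U}$ is a closed immersion; in either case $\kappa(y)\to(\Theta_{G,m})_*\mathcal{O}\otimes\kappa(y)$ is surjective, so $\Theta_{G,m}$ is a closed immersion.

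The main obstacle, and essentially the only delicate point, is the legitimacy of the fibrewise reduction: one must know that the formations of $Pic^2_{\CC_G/\CU_G}$, of the relative symmetric product $S^m_{\CU_G}(-)$, and of $Div_{(-)/\CU_G}$ are all compatible with base change to a fibre, so that the identity $(\Theta_{G,m})_{[U]}=\rho_{m,U}$ (recorded above for closed points) is available and the base-change computation on the finite fibres is valid. Granting this, everything else is the standard package "proper $+$ quasi-finite $\Rightarrow$ finite, and finite with fibrewise-closed-immersion fibres $\Rightarrow$ closed immersion". Alternatively, since $S^m_{\CU_G}(Pic^2_{\CC_G/\CU_G})$ is in fact flat (indeed smooth) over $\CU_G$, one may instead invoke a standard fibrewise criterion for immersions: a $\CU_G$-morphism whose source is flat and of finite presentation over $\CU_G$ and which is a closed immersion on every fibre is an immersion, and properness then upgrades it to a closed immersion.
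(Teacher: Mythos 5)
Your proposal is correct and follows essentially the same route as the paper: properness of $\Theta_{G,m}$ from the properness of $S^{m}_{\CU_{G}}(Pic^{2}_{\CC_{G}/\CU_{G}})$ over $\CU_{G}$ together with the separatedness of $Div_{S^{2}_{\CU_{G}}(\CC_{G})/\CU_{G}}$ (via \cite[Theorem 9.3.7]{FGA_explained}), combined with the fibrewise closed-immersion statement of Lemma 3.1. The only difference is that you spell out in detail the standard deduction ``proper $+$ closed immersion on fibres $\Rightarrow$ closed immersion'' (via finiteness and Nakayama), which the paper leaves implicit.
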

   \begin{proof}
   	By Lemma 3.1, $\Theta_{G,m}|_{[U]}$ is a closed embedding for any $[U] \in \CU_{G}$. Then it suffices to show that $\Theta_{G,m}$ is proper.
   As $S^{2}_{\CU_{G}}(\CC_{G})$ is projective and smooth over $\CU_{G}$,	$Div_{S^{2}_{\CU_{G}}(\CC_{G})/\CU_{G}}$ is a locally quasi-projective scheme over $\CU_{G}$\cite[Theorem 9.3.7]{FGA_explained}. In particular it is separated over $\CU_{G}$. Thus $\Theta_{G,m}$ is proper as $S^{m}_{\CU_{G}}(Pic^{2}_{\CC_{G}/\CU_{G}})$ is proper over $\CU_{G}$.  
   	\end{proof}
   \begin{proof}[Proof of Proposition 3.10]
  The relative effective divisor $\CF_{G}$  on $S^{2}_{\CU_{G}}(\CC_{G})/\CU_{G}$ defines a section $\tau_{G}: \CU_{G} \rightarrow Div_{S^{2}_{\CU_{G}}(\CC_{G})/\CU_{G}}$. From Corollary 3.11, we have $Im(\tau_{G}) \subseteq Im(\Theta_{G,3})$. Then the proof follows from the fact that $\Theta_{G,m}$ is a closed embedding and both $S^{m}_{\CU_{G}}(Pic^{2}_{\CC_{G}/\CU_{G}})$ and $\CU_{G}$ are varieties.
   	\end{proof}
    \section{proof of the main theorem}
    In this section we are going to prove our Main Theorem. We first give the precise definition of the moduli functor $\CM$ as follows, where we denote by $(Sch/\mathbb{C})_{fppf}$ the site consisting of the category of locally noetherian schemes over $\mathbb{C}$ equipped with $fppf$ topology.

    
    \begin{defn}\label{mod_func}
    	Denote by $\mathcal{M}$ the sheaf associated to the following presheaf:
    	\begin{align}
    		P\mathcal{M}: (Sch/\mathbb{C})^{op}_{fppf} &\rightarrow (Sets) \nonumber \\
    		T &\rightarrow \{K: \mathcal{C} \rightarrow T;\CL, \CR\}/\sim.
    		\nonumber
    	\end{align}
    	Here $K$ is a smooth proper family of curves of genus one. $\CL \in Pic(\CC/T)(T)$ is a family of line bundles of degree two on $\CC/T$, and $\CR: T \rightarrow S^{3}_{T}(Pic^{2}(\CC/T))$ is a section of the third symmetric product of Picard scheme of degree two of the family C/T. Moreover for any point $t \in  T$, denote by $\CR_{t}=(L_{1,t},L_{2,t},L_{3,t})$ the corresponding three line bundles on $\CC_{t}$. Then we also require that $\CL_{t} \cong L_{i,t}$ for any $i=1,2,3$, and that $L_{1,t}+L_{2,t}+L_{3,t}=3\CL_{t}$.
    	 Two data $(\CC/T,\CL,\CR)$ and $(\CC'/T,\CL',\CR')$ are said to be equivalent if and only if there is a $T$-isomorphism $\Psi: \CC \rightarrow \CC'$, such that $\Psi^{*}(\CL') \cong \CL$, and $\Psi^{*}(\CR')=\CR$
    \end{defn}
    We can verify Proposition \ref{D_{4}}(2).
    \begin{proof}[Proof of Proposition \ref{D_{4}}(2)]
    	The proof is adapted from the proof of \cite[Theorem 6.28]{BH}. Given a scheme $T$ and given an equivalent class in $P\CM(T)$ represented by $(K: \CC/T,\CL,\CR) $.  Denote by
    	\begin{equation*}
    		\CQ:Pic^{2}_{\CC/T} \times_{T} Pic^{2}_{\CC/T} \times_{T} Pic^{2}_{\CC/T} \rightarrow  S^{3}_{T}(Pic^{2}_{\CC/T})
    	\end{equation*}
    	the quotient map of the $S_{3}$-action. Denote by 
    	\begin{equation*}
    		\CR^{*}(\CQ): \widetilde{T} \rightarrow T, \,\,\text{and}\,\,\, \CQ^{*}(\CR): \widetilde{T} \rightarrow Pic^{2}_{\CC/T} \times_{T} Pic^{2}_{\CC/T} \times_{T} Pic^{2}_{\CC/T} 
    	\end{equation*}
    	the pull-backs.
    	Then $\widetilde{T}$ inherits an $S_{3}$-action, and $\CR^{*}(\CQ)$ is its quotient map. Moreover $\CQ^{*}(\CR)$ naturally defines three families of line bundles $\CL_{1},\CL_{2},\CL_{3}$ on the family $\widetilde{K}:\CC_{\widetilde{T}}=\CC \times_{T} \widetilde{T} /\widetilde{T}$ up to an $\text{\'{e}tale}$ cover of $\widetilde{T}$. Then from $fpqc$ descent of morphisms we can assume that the three line bundles are defined over $\widetilde{T}$.  Denote by $\widetilde{\CL}$ the pullback of $\CL$ to the family $\CC_{\widetilde{T}}/\widetilde{T}$.\par 
     Now denote by $\CP=\mathbb{P}_{\widetilde{T}}(\widetilde{K}_{*}(\widetilde{\CL}))$, let $\pi_{\widetilde{T}}: \CC_{\widetilde{T}} \rightarrow \CP$ be the double cover, $\CP_{i}=\mathbb{P}_{\widetilde{T}}(\widetilde{K}_{*}(\CL_{i}))$ for $i=1,2,3$, and $\CO_{\CP}(1)$ the tautological line bundle on $\CP$. Choose a Zariski covering $u: \overline{T} \rightarrow \widetilde{T}$ such that there exists an isomorphism $\widetilde{K}_{*}(\CL_{i})|_{\overline{T}} \cong \CO_{\overline{T}}^{\oplus 2}$ for each $i$.  It then induces a $\overline{T}$-isomorphism $\tau_{i}: \CP_{i} \times_{T} \overline{T} \cong \overline{T} \times \mathbb{P}^{1}$ for each $i$. Thus it gives a morphism $\kappa_{\overline{T}}: \CC_{\overline{T}} = \CC \times_{T} \overline{T} \rightarrow \mathbb{P}^{1} \times \mathbb{P}^{1} \times \mathbb{P}^{1}$, such that $\CL_{i}|_{\overline{T}} \cong (p_{i} \circ \kappa_{\overline{T}})^{*}(\CO_{\mathbb{P}^{1}}(1))$, where $p_{i}$ is the projection of $\mathbb{P}^{1} \times \mathbb{P}^{1} \times \mathbb{P}^{1}$ to its $i$-th factor. By our assumption and the construction (\ref{diag1}), there exists a morphism $ \psi_{\overline{T}}: \CP_{\overline{T}}=\overline{T} \times_{T} \CP \rightarrow Hilb_{2}(\mathbb{P}^{1} \times \mathbb{P}^{1} \times \mathbb{P}^{1})$ such that the ramified double cover $\pi_{\overline{T}}$ is the pull back of universal subscheme of lenghth two. Denote by $Li$ the universal scheme of lines connecting two points on $\mathbb{P}^{1} \times \mathbb{P}^{1} \times \mathbb{P}^{1}$. Let $e_{\overline{T}}: Li_{\overline{T}}=\psi_{\overline{T}}^{*}(Li) \rightarrow \CP_{\overline{T}}$ be its pull back, and let $\CO_{Li_{\overline{T}}}(1)$ be the tautological line bundle on $Li_{\overline{T}}$.  We describe the above morphisms in the following diagram:
    	\begin{equation}\label{Diag1}
    		\begin{tikzcd}
    			\CC_{\overline{T}} \arrow[r,"\pi^{*}\psi_{\overline{T}}"] \arrow[d,"\pi_{\overline{T}}"]
    			& \mathcal{T} \arrow[d,"\pi"]\arrow[r,"v"] & \mathbb{P}^{1} \times \mathbb{P}^{1} \times \mathbb{P}^{1}\\
    			\CP_{\overline{T}} \arrow[r,"\psi_{\overline{T}}"]
    			& Hilb^{2}(\mathbb{P}^{1} \times \mathbb{P}^{1} \times \mathbb{P}^{1}) \\
    			Li_{\overline{T}} \arrow[u,"e_{\overline{T}}"] \arrow[r]  & Li \arrow[u,"e"] \arrow[r] & \mathbb{P}(\mathbb{C}^{2} \otimes \mathbb{C}^{2} \otimes \mathbb{C}^{2})
    		\end{tikzcd}    
    	\end{equation}
    	
    	Then by our assumption, $Li_{\overline{T}}=\mathbb{P}_{\CP_{\overline{T}}}((\pi_{\overline{T}})_{*}\kappa_{\overline{T}}^{*}(\CO_{\mathbb{P}^{1}}(1) \boxtimes \CO_{\mathbb{P}^{1}}(1)  \boxtimes \CO_{\mathbb{P}^{1}}(1) ))=\mathbb{P}_{\CP_{\overline{T}}}((\pi_{\overline{T}})_{*}(\CO_{\CC_{\overline{T}}}) \otimes \CO_{\CP_{\overline{T}}}(3))$. As $\pi_{\overline{T}}$ is a ramified double cover, $(\pi_{\overline{T}})_{*}(\CO_{\CC_{\overline{T}}})=\CO_{\CP_{\overline{T}}} \oplus \CA_{-2}$ for  a line bundle $\CA_{-2}$ of degree $(-2)$ on the $\mathbb{P}^{1}$-bundle $\CP_{\overline{T}}/\overline{T}$. Then the projection $(\pi_{\overline{T}})_{*}(\CO_{\CC_{\overline{T}}}) \otimes \CO_{\CP_{\overline{T}}}(3) \rightarrow \CA_{-2} \otimes \CO_{\CP_{\overline{T}}}(3)$ yields a section $s_{\overline{T}}: \CP_{\overline{T}} \rightarrow Li_{\overline{T}}$, and hence we obtain a map  $l_{\overline{T}}:\CP_{\overline{T}} \rightarrow \mathbb{P}(\mathbb{C}^{2} \otimes \mathbb{C}^{2} \otimes \mathbb{C}^{2})$. Restricted to any $v \in \overline{T}$, one sees that the above construction is the same as \cite[Construction 6.30]{BH}. In particular $(\l_{\overline{T}})|_{v}$ is a linear embedding for any $v \in \overline{T}$. Thus it induces a morphism $\beta_{\overline{T}}: \overline{T} \rightarrow \CU_{D_{4}} \subseteq  Gr(2,\mathbb{C}^{2} \otimes \mathbb{C}^{2} \otimes \mathbb{C}^{2})$, such that for any $v \in \overline{T}$, the image of $v$ is exactly the nondegenerate plane given by the datum $(\kappa_{\overline{T}}|_{v},\CL|_{v})$ as in Theorem \ref{cr}. Then we get an induced morphism $\beta'_{\overline{T}}:\overline{T} \rightarrow M=\CU_{D_{4}}//H_{D_{4}} \times S_{3}$.  We now check that this morphism can be uniquely descended to a morphism from $T$ to $M$. First if we choose any other Zariski cover $\overline{T}_{2} \rightarrow \widetilde{T}$, we note that the pullbacks of $\beta_{\overline{T}}'$ and $\beta_{\overline{T}_{2}}'$ to the fiber product $\overline{T} \times_{\widetilde{T}} \overline{T}_{2}$ differ from a family of translations in $PGL_{2} \times PGL_{2} \times PGL_{2}$, i.e., induced by a morphism $\overline{T} \times_{\widetilde{T}} \overline{T}_{2} \rightarrow PGL_{2} \times PGL_{2} \times PGL_{2}$. Thus by the universal property of the GIT quotient by $H_{D_{4}}$ and by the $fpqc$ descent of morphisms, we obtain a unique morphism: $\beta_{\widetilde{T}}':\widetilde{T} \rightarrow M$.  Next for the finite cover $\widetilde{T} \rightarrow T$, this is the quotient morphism of the $S_{3}$-action on $\widetilde{T}$. From our construction one sees that $\beta_{\widetilde{T}}'$ is $S_{3}$-invariant, whence we obatin a unique morphism $\beta'_{T}: T \rightarrow M$ descending $\beta'_{\widetilde{T}}$.  	
    	One can then easily check that the above construction does not depend on the choice of the representing datum. It
    	 induces a natural transformation $\xi: \CM \rightarrow h_{M}$ which satisfies the universal property of coarse moduli spaces by considering the natural family $(\CC_{D_{4}}/\CU_{D_{4}},\CL_{D_{4}},\CR_{D_{4}})$. Finally the fact that $\xi(Spec(\BC))$ is bijective follows from Proposition \ref{D_{4}'}(2).
    	\end{proof}
    \begin{proof}[Proof of the Main Theorem]
    Now given $G=F_{4},E_{6},E_{7}$ or $E_{8}$, we have constructed in Section 3 a datum associated to $\CU_{G}$, namely $\alpha_{G} = (\CC_{G}/\CU_{G},\CL_{G},\CR_{G})$. Thus by Proposition \ref{D_{4}}(2) we obtain a morphism $\xi(\CU_{G})(\alpha_{G}): \CU_{G} \rightarrow M$,  such that for any nondegenerate plane $[U] \in \CU_{G}$, its image is the equivalent class of nondegenerate planes in $\mathbb{C}^{2} \otimes \mathbb{C}^{2} \otimes \mathbb{C}^{2}$ given by the datum $(C_{U},\pi_{U},L_{U})$ as in Theorem \ref{3_lines}(2).  The morphism is $H_{G}$-invariant as $H_{G}$-equivalent planes have isomorphic data. We then obtain a morphism between GIT quotients of nonsingular sections $\Phi_{G}:M_{G} \rightarrow M$. On the other hand recall from Section 2.2 that there is a natural embedding of representations: $(Lie(H_{D_{4}}),W_{D_{4}}) \subset (Lie(H_{G}),W_{G})$. For a plane $[U] \in \CU_{D_{4}}$, one sees that the associated datum $(C_{U},L_{U},L_{1},L_{2},L_{3})$ given by Proposition \ref{D_{4}'} is the same as the datum given by Theorem \ref{3_lines}(2) if we view $[U]$ as a nondegenerate plane in $W_{G}$ as well. In other words we can conclude that the natural morphism $\sigma_{G}: M \rightarrow M_{G}$ constructed in Corollary \ref{section_} is a section of $\Phi_{G}$. Finally by Theorem \ref{mot_}, both $M$ and $M_{G}$ are varieties of dimension 3. It follows that the section $\sigma_{G}$ is an isomorphism.
 \end{proof}
\appendix
\section{The open embedding of $M \subseteq \BP(1,3,4,6)$}
In this appendix we follow \cite[Section 2.4]{BH} to realize $M=M_{D_{4}}//S_{3}$ as an open subset of $\BP(1,3,4,6)$ whose complement is a hypersurface of degree 12.  The main ingredient is to consider the following moduli functor as indicated by Proposition 1.6(3):
\begin{defn}\label{moduli_func}
Denote by $\CM'$ the sheaf associated to the following presheaf:
\begin{align}
	P\mathcal{M}': (Sch/\mathbb{C})^{op}_{fppf} &\rightarrow (Sets) \nonumber \\
	T &\rightarrow \{J: E/T,\sigma,\sigma_{1},\sigma_{2},\sigma_{3}\}/\sim.
	\nonumber
\end{align}
Here ($J:E/T,\sigma$) is a smooth proper family of elliptic curves, namely $J$ is a smooth proper family of irreducible curves of genus one and $\sigma$ is a section of $J$. Moreover each $\sigma_{j}$ is a section of $J$ such that 
\begin{equation}
 \CO_{E}(\sum_{j=1}^{3}\sigma_{j}(T)) \cong \CO_{E}(3\sigma(T)) \otimes J^{*}(\CL),
 \end{equation}
  for some line bundle $\CL$ on $T$, and $\sigma(p) \not= \sigma_{j}(p)$ for any geometric point $p$ of $T$ and for any $j=1,2,3$. 
\end{defn}
\begin{prop}
	$\CM'$ admits a coarse modul space as an open subset of $\BP(1,2,2,3)$ whose complement is a hypersurface of degree 12.  More precisely, given a datum $(E,P,P_{i},i=1,2,3)$ in $P\CM'(Spec(\BC))$. Write it in normal form as
		\begin{equation}
		y^{2}=x^{3}+a_{4}x+a_{6},
	\end{equation}
	where $P=(0,1)$, and we write $P_{1}=(a_{2},a_{3}),P_{2}=(a_{2}',a_{3}'),P_{3} =(a_{2}'',a_{3}'')$. Let $a_{1}$ be the slope of the line connecting $P_{1}$ and $P_{2}$. Then the correponding point in $\BP(1,2,2,3)$ is $[a_{1}:a_{2}:a_{2}':a_{3}]$.  Moreover the following equality holds:
	\begin{align}
		a_{3}'&=a_{3}+a_{1}(a_{2}'-a_{2}), a_{2}''=a_{1}^{2}-a_{2}-a_{2}', a_{3}''=a_{1}^{3}-3(a_{1}a_{2}-a_{3})-a_{3}-a_{3}'; \\ \nonumber	a_{4}&=a_{1}(a_{3}+a_{3}')-(a_{2}^{2}+a_{2}a_{2}'+a_{2}'^{2}), 
		a_{6}=a^{2}_{3}-a_{1}a_{2}(a_{3}+a_{3}')+a_{2}a_{2}'(a_{2} + a_{2}'),\nonumber
		\end{align}
and we have $\Delta(E):=4a_{4}^{3}+27a_{6}^{2} \not=0$.
\end{prop}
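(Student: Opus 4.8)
The plan is to establish the two explicit claims (the normal-form formulas and $\Delta(E)\neq 0$) by a direct computation in a Weierstrass model, and then to bootstrap the coarse–moduli statement to families by the argument already used for Proposition~\ref{D_{4}}(2), which adapts \cite[Theorem 6.28]{BH}. Over $\Spec\BC$ the line bundle $\CL$ on the point is trivial, so for a datum $(E,P,P_1,P_2,P_3)\in P\CM'(\Spec\BC)$ the defining relation reduces to $\CO_E(P_1+P_2+P_3)\cong\CO_E(3P)$; equivalently $P_1\oplus P_2\oplus P_3=P$ for the group law with origin $P$, i.e.\ $P_1,P_2,P_3$ are collinear in the plane cubic model cut out by $|3P|$. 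First I would fix a Weierstrass model $E\colon y^2=x^3+a_4x+a_6$ with $P$ the point at infinity; over $\BC$ this is unique up to $(x,y)\mapsto(u^2x,u^3y)$, $u\in\BC^{*}$, under which $(a_4,a_6)\mapsto(u^4a_4,u^6a_6)$, and, writing $P_i=(x_i,y_i)$ and $a_1$ for the slope of the line through $P_1,P_2$, $(a_1,x_1,x_2,y_1)\mapsto(ua_1,u^2x_1,u^2x_2,u^3y_1)$. Hence $(a_1,a_2,a_2',a_3):=(a_1,x_1,x_2,y_1)$ gives a well-defined point $[a_1:a_2:a_2':a_3]\in\BP(1,2,2,3)$, and the hypotheses $P_i\neq P$ are exactly what make the line $\overline{P_1P_2}$ non-vertical, hence $a_1$ and this point meaningful.

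Next I would read off the formulas. Writing $\overline{P_1P_2}$ as $y=a_1x+b$ with $b=a_3-a_1a_2$ and evaluating at $P_2$ gives $a_3'=a_3+a_1(a_2'-a_2)$. Substituting $y=a_1x+b$ into the Weierstrass equation gives the monic cubic $x^3-a_1^2x^2+(a_4-2a_1b)x+(a_6-b^2)$, whose roots are $a_2,a_2',a_2''$; Vieta gives $a_2''=a_1^2-a_2-a_2'$, then $a_3''=a_1a_2''+b$ simplifies to the displayed expression, and solving the two remaining Vieta relations for $a_4$ and $a_6$ — using $2b=a_3+a_3'-a_1(a_2+a_2')$ for the coefficient of $x$, and the relation $a_3^2=a_2^3+a_4a_2+a_6$ expressing "$P_1\in E$" for the constant term — yields the two displayed formulas. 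These are all routine algebraic manipulations.

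For the converse I would start from any $[a_1:a_2:a_2':a_3]\in\BP(1,2,2,3)$, \emph{define} $a_3',a_2'',a_3'',a_4,a_6$ by those formulas (they are weighted-homogeneous in $a_1,a_2,a_2',a_3$ of degrees $3,2,3,4,6$, so this is consistent on $\BP(1,2,2,3)$), put $E\colon y^2=x^3+a_4x+a_6$, $P=\infty$, $P_1=(a_2,a_3)$, $P_2=(a_2',a_3')$, $P_3=(a_2'',a_3'')$, and verify the polynomial identity
\[
 x^3+a_4x+a_6-(a_1x+a_3-a_1a_2)^2=(x-a_2)(x-a_2')(x-a_2'').
\]
This single identity shows at once that $P_1,P_2,P_3$ lie on $E$, are collinear (hence $\CO_E(P_1+P_2+P_3)\cong\CO_E(3P)$), and are affine (hence $\neq P$), while $a_1$ is the slope of $\overline{P_1P_2}$. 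The forward and backward constructions are mutually inverse up to isomorphism of data, so they identify the isomorphism classes in $P\CM'(\Spec\BC)$ with the set of $[a_1:a_2:a_2':a_3]$ for which $E$ is smooth. Since the discriminant of $y^2=x^3+a_4x+a_6$ equals, up to a nonzero constant, $\Delta:=4a_4^3+27a_6^2$, this locus is precisely $\{\Delta\neq0\}$; $a_4,a_6$ being weighted-homogeneous of degrees $4,6$, $\Delta$ is weighted-homogeneous of degree $12$, so $\{\Delta=0\}\subseteq\BP(1,2,2,3)$ is a hypersurface of degree $12$, and $\Delta(E)\neq0$ for an actual datum is just smoothness of $E$.

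Finally I would upgrade to families. Given $(J\colon\CE/T,\sigma,\sigma_1,\sigma_2,\sigma_3)\in P\CM'(T)$, after replacing $T$ by a Zariski cover on which $J_{*}\CO_{\CE}(2\sigma(T))$ and $J_{*}\CO_{\CE}(3\sigma(T))$ are free one obtains a Weierstrass model of $\CE/T$ and regular functions $a_1,a_2,a_2',a_3$; two such models differ by the $\Gm$-action with weights $(1,2,2,3)$, so the induced maps to $\BP(1,2,2,3)$ agree on overlaps, glue to a morphism $T\to\BP(1,2,2,3)$, and land in $\{\Delta\neq0\}$ by smoothness of $\CE/T$. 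This defines a natural transformation $\xi'\colon\CM'\to h_{\BP(1,2,2,3)\setminus\{\Delta=0\}}$; bijectivity on $\BC$-points is the computation above, and the coarse-moduli (corepresentability) property follows exactly as in the proof of Proposition~\ref{D_{4}}(2), testing against the family built from the formulas over the cone $\{\Delta\neq0\}$. I expect the main difficulty to be purely bookkeeping: pinning down the polynomial identity and the weighted degrees precisely enough that the two constructions are genuinely inverse, and checking the gluing and descent in the relative step; the corepresentability itself is where the argument leans on the machinery already set up for Proposition~\ref{D_{4}}(2).
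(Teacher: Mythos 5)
Your proposal is correct and follows essentially the same route as the paper: pointwise Weierstrass normal form to derive the formulas and the discriminant condition, then relative Weierstrass models over a Zariski cover glued via the weighted $(1,2,2,3)$ scaling action to get the natural transformation into $D_{+}(\Delta)$, with corepresentability checked against the tautological family over the cone $\{\Delta\neq 0\}$ (the paper's $\widetilde{M}'$ and $\alpha'$). The only difference is cosmetic: you derive the relations (A.3) directly by Vieta, whereas the paper delegates that computation to \cite[Section 2.4]{BH} (correcting a sign/coefficient flaw there).
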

\begin{proof}
We write $\BP(1,2,2,3)=Proj(\BC[c_{1},c_{2},c_{2}',c_{3}])$, where $c_{i},c_{j}'$ are generators of degree $i,j$ respectively. Denote  $c_{3}',c_{4},$ and $c_{6}$ by the same formula as (A.3) in terms of $c_{1},c_{2},c_{2}'$ and $c_{3}$. Let $M'=D_{+}(4c_{4}^{3}+27c_{6}^{2}) \subseteq \BP(1,2,2,3)$. Then we aim to construct a natural transformation $\xi':  \CM' \rightarrow h_{M'}$ which makes $M'$ to be the coarse moduli space of $\CM'$.  To do so, we are going to write a family of elliptic curves (Zariski locally) in Weierstrass forms. More  precisely we recall the following standard fact:
	\begin{lem}
	Let $(J:E/T,\sigma)$ be a family of elliptic curves over $T$. Denote by $D=\sigma(T)$ the effective Cartier divisor of $E$ and denote by $\CL=\CO_{E}(D)$ the associated line bundle on $E$.  Then:\par 
	(1) There exists an open covering of $T=\cup_{i}T_{i}$ such that $J_{*}(\CL^{\otimes{k}})$ and $J_{*}(\CL^{\otimes{k+1}})/J_{*}(\CL^{\otimes{k}})$ are free of rank $k$ and $1$ respectively on each $T_{i}$ and for any $k \leqslant 6$.\par 
	(2) For each $T_{i}$ there exists a choice of a basis of $J_{*}(\CL^{\otimes{3}})(T_{i})$ namely $1, x_{i},y_{i}$ such that $x_{i} \in J_{*}(\CL^{\otimes{2}})(T_{i}) \backslash J_{*}(\CL)(T_{i})$, $y_{i} \in J_{*}(\CL^{\otimes{3}})(T_{i}) \backslash J_{*}(\CL^{\otimes{2}})(T_{i})$ and
	satsfying:
	\begin{equation}
		y_{i}^{2}=x_{i}^{3}+a_{4,i}x_{i}+a_{6,i},
	\end{equation}
for some $a_{4,i},a_{6,i} \in \CO(T_{i})$.\par 
  (3) For each $T_{i}$ the sections $1,x_{i},y_{i} \in \CL^{\otimes{3}}(J^{-1}(T_{i}))$ induces a closed embedding of $J^{-1}(T_{i})$ into $\BP^{2}_{T_{i}}$ whose image is $V_{+}(y_{i}^{2}z_{i}-(x_{i}^{3}+a_{4,i}x_{i}z_{i}^{2}+a_{6,i}z_{i}^{3}))$. Moreover $\sigma|_{T_{i}}=[0:1:0]$ and $E\cap J^{-1}(T_{i})$ equlas to the hyperplane section $V_{+}(z_{i}) \cap J^{-1}(T_{i})$.\par 
  (4) For any $i,i' \in I$, we have $x_{i}|_{T_{i} \cap T_{j}}=n_{i} x_{j}|_{T_{i} \cap T_{j}}$ and $y_{i}|_{T_{i} \cap T_{j}}=m_{i} y_{j}|_{T_{i} \cap T_{j}}$ for some $m_{i},n_{i} \in \CO(T_{i} \cap T_{j})^{*}$ such that $m_{i}^{2}=n_{i}^{3}$.
	\end{lem}
Now for a given a datum $(J:E/T,\sigma,\sigma_{1}, \sigma_{2},\sigma_{3})$ representing a class in $P \CM'(T)$,  we can choose $T=\cup_{i}T_{i}$ and suitable sections $x_{i},y_{i}$ satisfying the properties in Lemma A.3. From Definition A.1 the image of $\sigma_{j}|_{T_{i}}$ lies in $D_{+}(z_{i})$. Thus we can write $\sigma_{1}|_{T_{i}}=[a_{2,i}:a_{3,i}:1]$, $\sigma_{2}|_{T_{i}}=[a_{2,i}':a_{3,i}':1]$ and $\sigma_{3}|_{T_{i}}=[a_{3,i}'':a_{3,i}'':1]$ for some $a_{2,i},a_{2,i}',a_{3,i},a_{3,i}',a_{2,i}'',a_{3,i}'' \in \CO(T_{i})$. By (A.1) and Lemma A.3(3) we conclude that $\sigma_{1}(T_{i})+\sigma_{2}(T_{i})+\sigma_{3}(T_{i})=H_{i} \cap J^{-1}(T_{i})$ for some hyperplane $H_{i}=V_{+}(\alpha_{i}x_{i}+\beta_{i}y_{i}+\gamma_{i}z_{i}) \subseteq \BP_{T_{i}}^{2}$. As $\sigma_{j}(p) \not= \sigma(p)=[0:1:0]$ for any $j$ and for any geometric point $p$ on $T_{i}$, we conclude that $\beta_{i}$ is invertible in $\CO(T_{i})$. And for any point $p \in T_{i}$,  the slope of the lines connecting $\sigma_{1}(p),\sigma_{2}(p), \sigma_{3}(p)$ in $\BP_{k(p)}^{2}$ is given by $(\beta_{i}^{-1}\alpha_{i})(p)$. Denote by $a_{1,i}=\beta_{i}^{-1}\alpha_{i}$. Then the equation (A.3) holds for $a_{3,i}',a_{3,i}'',a_{2,i}'',a_{4,i}$ and $a_{6,i}$ in terms of $a_{1,i},a_{2,i},a_{2,i}',a_{3,i}$. To show this, it suffices to check on any geometric point $t \in T_{i}$, which follows from \cite[Section 2.4]{BH}. Here we note that there is a flaw in the expression of $a_{2}''$ in \cite[Section 2.4]{BH}: the coefficient of the term $a_{1}^{2}$ should be 1 instead of 3, which can be checked directly.
Next  we recall the following fact:
\begin{lem}
	Let $R$ be a ring, then the closed subscheme of $\BP_{R}^{2}$ defined by the Weierstrass equation
	\begin{equation*}
		y^{2}z=x^{3}+b_{4}xz^{2}+b_{6}z^{3}
	\end{equation*}
	is an elliptic curve over $R$ if and only if $\Delta=4b_{4}^{3}+27b_{6}^{2}$ is invertible in $R$.
\end{lem}
Thus we conclude that $\Delta_{i}=4a_{4,i}^{3}+27a_{6,i}^{2}$ is invertible in $\CO(T_{i})$ for each $T_{i}$. Now for each $T_{i}$ we can assign it a morphism $f_{i}$ to $M'=D_{+}(4c_{4}^{3}+27c_{6}^{2})$ by
\begin{align*}
	 f_{i}(t)=[a_{1,i}(t):a_{2,i}(t):a_{2,i}'(t):a_{3,i}(t)],
	 \end{align*}
 for any point $t \in T_{i}$.
Then by Lemma A.3(4) we conclude that $\{f_{i}:T_{i} \rightarrow M', i \in I\}$
glues to a morphism $f: T \rightarrow M'$ which does not depend on the choice of the open covering and the choice of sections $x_{i},y_{i}$ as well as the choice of the representative of a given class. Furthermore one can easily check that this induces a natural transformation $\CM' \rightarrow h_{M'}$. On the other hand, denote by $\widetilde{M}'=\BC^{4}\backslash V(\Delta)$ where $\Delta$ is defined as in (A.3). Then one sees that there exists a natural datum $\alpha' \in \CM'(\widetilde{M}')$ associated to $\widetilde{M}'$ given as above. And the induced morphism from $\widetilde{M}'$ to $M'$ is given by the GIT quotient of the correponding $\BC^{*}$-action.
 Then using the datum $\alpha'$ one can easily verify the universal property of coarse moduli spaces. Finally the fact that $\xi'(Spec(\BC))$ is bijective is clear from our construction.
\end{proof}
Next we relate $M'$ to the space $M$ by constructing a natural datum in $P\CM'(\CU_{D_{4}})$.
 Recall from diagram (3.2) that there is a morphism $\kappa_{D_{4}}:\CC_{D_{4}} \rightarrow \BP^{1} \times \BP^{1} \times \BP^{1}$ and there is a line bundle of degree two $\CL_{D_{4}}$ on $\CC_{D_{4}}/\CU_{D_{4}}$. Moreover there are three line bundles of degree two on $\CC_{D_{4}}/\CU_{D_{4}}$ defined by $\CL_{i}=(p_{i} \circ \kappa_{D_{4}})^{*}(\CO_{\BP^{1}}(1))$ for $i=1,2,3$. By Lemma 3.9, the sheaf $Pic_{(\CC_{G}/\CU_{G})(\text{\'{e}tale})}^{2}$  is represented by a scheme $Pic^{2}(\CC_{D_{4}}/\CU_{D_{4}})$ which is a smooth projective family of curves of genus one over $\CU_{D_{4}}$. Thus the line bundles $\CL_{D_{4}},\CL_{1},\CL_{2},\CL_{3}$ are given by sections of $Pic^{2}(\CC_{D_{4}}/\CU_{D_{4}})/\CU_{D_{4}}$ which we denote by $\sigma,\sigma_{1},\sigma_{2}$ and $\sigma_{3}$ respectively. Then $(Pic^{2}(\CC_{D_{4}}/\CU_{D_{4}})/\CU_{D_{4}},\sigma)$ is a family of elliptic curves and from Proposition 1.6(2) we conclude that the datum $(Pic^{2}(\CC_{D_{4}}/\CU_{D_{4}})/\CU_{D_{4}},\sigma,\sigma_{i},i=1,2,3)$ defines a datum $\beta_{D_{4}} \in \CM'(\CU_{D_{4}})$.\par 
 The open embedding of $M$ into $\BP(1,3,4,6)$ is given as follows:
\begin{prop}\label{para_}
	 Let $\xi'(\CU_{D_{4}})(\beta_{D_{4}}): \CU_{D_{4}} \rightarrow M'$ be the morphism induced by $\beta_{D_{4}}$ then:\par 
	(1).  $\xi'(\CU_{D_{4}})(\beta_{D_{4}})$ is $H_{D_{4}}$-invariant and the induced quotient map $M_{D_{4}} \rightarrow M'$ is an isomorphism.
	\par 
	(2). The $S_{3}$-action on $M_{D_{4}} \cong M'$ induces a natural $S_{3}$-action on $\BP(1,2,2,3)$. The induced quotient map gives an open embedding of $M=M_{D_{4}}//S_{3}$ into $\BP(1,2,2,3)//S_{3} \cong \BP(1,3,4,6)$ whose complement is a hypersurface of degree 12.
\end{prop}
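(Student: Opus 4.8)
The plan is to prove (1) by showing that $\xi'(\CU_{D_{4}})(\beta_{D_{4}})$ is nothing but the canonical map to the GIT quotient $\Proj R_{D_{4}}=\BP(1,2,2,3)$, and (2) by an explicit computation of the ring of $S_{3}$-invariants on $\BP(1,2,2,3)$. For (1), first I would note that the Bhargava--Ho datum $(C_{U},L_{U},\kappa_{U})$, hence the triple $L_{1},L_{2},L_{3}$ of Theorem \ref{3_lines}(2), hence the class of $\beta_{D_{4}}$ over $[U]$ in $\CM'(\Spec\BC)$, depends only on the $H_{D_{4}}$-orbit of $[U]$; since $M'$ is a coarse moduli space for $\CM'$ (Proposition A.2), this shows $\xi'(\CU_{D_{4}})(\beta_{D_{4}})$ is $H_{D_{4}}$-invariant and descends to a morphism $\bar\phi\colon M_{D_{4}}=\CU_{D_{4}}//H_{D_{4}}\to M'$. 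The heart of (1) is to identify $\bar\phi$: by Proposition A.2 the coarse-moduli morphism of $\CM'$ sends a class to the point $[a_{1}:a_{2}:a_{2}':a_{3}]\in\BP(1,2,2,3)$ read off from a Weierstrass normal form of the underlying elliptic curve together with its marked points, while Proposition \ref{D_{4}'}(3) provides a normalization of the generators of $R_{D_{4}}\cong\BC[\BC^{2}\otimes W_{D_{4}}]^{SL_{2}\times H_{D_{4}}}$ (so that, after halving degrees, $\Proj R_{D_{4}}=\BP(2,4,4,6)=\BP(1,2,2,3)$) making these generators exactly the Weierstrass coordinates attached to $(C_{U},L_{U},L_{1},L_{2},L_{3})$. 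The discrepancy between the points $P_{i}=L_{U}-L_{i}$ and the points $-P_{i}$ used to describe $\CM'$ disappears in $\BP(1,2,2,3)$, since $[-c_{1}:c_{2}:c_{2}':-c_{3}]=[c_{1}:c_{2}:c_{2}':c_{3}]$. Hence $\xi'(\CU_{D_{4}})(\beta_{D_{4}})$ is the canonical morphism $\CU_{D_{4}}\to\Proj R_{D_{4}}=\BP(1,2,2,3)$.

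Since $\CU_{D_{4}}=\{\sigma_{12}\neq 0\}$ is exactly the GIT-stable locus (Proposition \ref{D_{4}}(1)), this morphism is a geometric quotient, so $\bar\phi$ is an open immersion of $M_{D_{4}}$ onto $D_{+}(\sigma_{12})\subseteq\BP(1,2,2,3)$, and by construction its image lies in $M'=D_{+}(\Delta)$ with $\Delta=4c_{4}^{3}+27c_{6}^{2}$; thus $D_{+}(\sigma_{12})\subseteq D_{+}(\Delta)$. For the reverse inclusion, a $\BC$-point of $D_{+}(\Delta)=M'$ corresponds, by bijectivity of $\xi'(\Spec\BC)$, to a class $(E,\sigma,\sigma_{1},\sigma_{2},\sigma_{3})\in\CM'(\Spec\BC)$ with $E$ smooth (as $\Delta\neq 0$); setting $C=E$, $L=\CO_{E}(2\sigma)$, $L_{i}=\CO_{E}(\sigma+\sigma_{i})$ yields a triple satisfying the hypotheses of Proposition \ref{D_{4}'}, hence (by Theorem \ref{cr}) arising from a nondegenerate plane $[U]\in\CU_{D_{4}}$ with $\beta_{D_{4}}|_{[U]}\cong(E,\sigma,\sigma_{i})$ after unwinding $\Pic^{2}(C)\cong E$, so the point lies in the image of $\bar\phi$. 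Therefore $D_{+}(\sigma_{12})=D_{+}(\Delta)$ and $\bar\phi$ is an isomorphism; alternatively, $\bar\phi$ is a bijective morphism of irreducible varieties onto the normal variety $M'$, hence an isomorphism in characteristic zero.

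For (2), the permutation action of $S_{3}$ on $W_{D_{4}}=\BC^{2}\otimes\BC^{2}\otimes\BC^{2}$ fixes the determinant form $D$ (Cayley's hyperdeterminant), hence fixes $\sigma_{12}$, so it preserves $\CU_{D_{4}}$, and it normalizes $H_{D_{4}}=SL_{2}\times SL_{2}\times SL_{2}$; thus it descends to $M_{D_{4}}$, and via the isomorphism of (1) to an $S_{3}$-action on $M'\subseteq\BP(1,2,2,3)$ permuting the three (collinear) marked points $P_{1},P_{2},P_{3}$ of the Weierstrass model. Writing this out via the addition formulas (A.3) — for instance the transposition $(1\,2)$ acts by $(c_{1},c_{2},c_{2}',c_{3})\mapsto(c_{1},c_{2}',c_{2},c_{3}+c_{1}(c_{2}'-c_{2}))$ and a $3$-cycle by $(c_{1},c_{2},c_{2}',c_{3})\mapsto(c_{1},c_{2}',c_{1}^{2}-c_{2}-c_{2}',c_{3}+c_{1}(c_{2}'-c_{2}))$ — one sees $S_{3}$ acts by graded polynomial automorphisms of the affine cone $\Spec\BC[c_{1},c_{2},c_{2}',c_{3}]$, hence by automorphisms of $\BP(1,2,2,3)$ extending the action on $M'$.

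Finally I would compute $\BP(1,2,2,3)//S_{3}$: setting $m=c_{3}-c_{1}c_{2}$ (degree $3$) and $b_{i}=c_{i}-c_{1}^{2}/3$ (degree $2$, with $b_{2}+b_{2}'+b_{2}''=0$) linearizes the action, exhibiting it as trivial on $c_{1},m$ and the standard $2$-dimensional representation on $b_{2},b_{2}',b_{2}''$, so that $\BC[c_{1},c_{2},c_{2}',c_{3}]^{S_{3}}=\BC[c_{1},m]\otimes\BC[b_{2},b_{2}']^{S_{3}}=\BC[c_{1},m,c_{4},c_{6}]$, a polynomial ring on generators of degrees $1,3,4,6$ (the elementary symmetric functions of the $b_{i}$ differ from $c_{4},c_{6}$ only by elements of $\BC[c_{1},m]$); hence $\BP(1,2,2,3)//S_{3}=\BP(1,3,4,6)$. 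As $\Delta=4c_{4}^{3}+27c_{6}^{2}$ is the discriminant of the elliptic curve it is $S_{3}$-invariant of degree $12$, and therefore $M=M_{D_{4}}//S_{3}\cong M'//S_{3}=D_{+}(\Delta)//S_{3}=D_{+}(\Delta)\subseteq\BP(1,3,4,6)$ is an open subvariety whose complement is the degree-$12$ hypersurface $V_{+}(4c_{4}^{3}+27c_{6}^{2})$. The main obstacle is the identification in the first part of $\xi'(\CU_{D_{4}})(\beta_{D_{4}})$ with the canonical quotient morphism: this requires carefully matching the Weierstrass-coordinate construction of Proposition A.2 with the normalization of invariant-ring generators from Proposition \ref{D_{4}'}(3), including the degree and sign conventions; the reverse inclusion via the inverse Bhargava--Ho construction and the $S_{3}$-invariant computation are then routine.
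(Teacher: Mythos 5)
Your proposal is correct, but part (1) is argued by a genuinely different route than the paper's. The paper proves (1) purely moduli-theoretically: $H_{D_4}$-invariance is immediate from the equivariance of the family, and then, since $M_{D_4}$ and $M'$ are normal varieties, it suffices to check that the induced map is bijective, which is done by observing that $M_{D_4}$ parametrizes classes of data $(C,L,L_1,L_2,L_3)$ (Propositions \ref{D_{4}'} and \ref{D_{4}}) and that the map sends such a class to $(Jac^2(C),L,L_i)$, a bijection once one identifies $C$ with $Jac^{2}(C)$. You instead identify $\xi'(\CU_{D_4})(\beta_{D_4})$ with the canonical quotient morphism $\CU_{D_4}\to Proj(R_{D_4})=\BP(1,2,2,3)$ via the normalization of Proposition \ref{D_{4}'}(3) (correctly noting that the $P_i$ versus $-P_i$ discrepancy and the choice of Weierstrass model are absorbed by the weighted scaling $[-c_1:c_2:c_2':-c_3]=[c_1:c_2:c_2':c_3]$), so that $M_{D_4}$ becomes the open set $D_+(\sigma_{12})$, and you then prove $D_+(\sigma_{12})=D_+(\Delta)$ by running the Bhargava--Ho correspondence backwards from a point of $M'$ (setting $L=\CO_E(2\sigma)$, $L_i=\CO_E(\sigma+\sigma_i)$, which indeed satisfies the hypotheses of Proposition \ref{D_{4}'}(2)). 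This buys an explicit identification of $M_{D_4}$ inside $Proj(R_{D_4})$ in terms of Vinberg's generators -- closer in spirit to Proposition \ref{D_{4}}(3) -- at the price of leaning on the precise normalization in Proposition \ref{D_{4}'}(3) and its sign/rescaling bookkeeping, which the paper's argument avoids entirely; note also that your fallback remark (``bijective onto a normal variety, hence an isomorphism'') is not self-contained as stated, since injectivity there would still have to come either from your GIT identification or from the coarse-moduli description as in the paper. For (2) you follow essentially the paper's path: the same explicit formulas for $(12)$ and $(123)$ on $\BP(1,2,2,3)$ (using collinearity of $P_1,P_2,P_3$) and the same conclusion, except that where the paper exhibits the generators $d_1,d_3,d_4,d_6$ by direct calculation, you linearize the action via $m=c_3-c_1c_2$ and $b_i=c_i-c_1^2/3$ and reduce to the standard $S_3$-representation, a cleaner derivation of the degrees $1,3,4,6$; the identification of the complement with $\{4c_4^3+27c_6^2=0\}$ of degree $12$ agrees with the paper.
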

\begin{proof}
That  $\xi'(\CU_{D_{4}})(\beta_{D_{4}})$ is $H_{D_{4}}$-invariant follows from the fact that $H_{D_{4}}$ naturally acts on the family $\CC_{D_{4}}/U_{D_{4}}$ and the line bundles $\CL,\CL_{1},\CL_{2},\CL_{3}$. As both $M_{D_{4}}$ and $M'$ are normal varieties, it suffices to show that the quotient map is bijective. By Proposition 1.6(1) and Corollary 1.9(1), $M_{D_{4}}$ parametrizes isomorphic classes of data consisting of $(C,L,L_{i},i=1,2,3)$. And the quotient morphism maps an isomorphic class of $(C,L,L_{i},i=1,2,3)$ to the class of $(Jac^{2}(C),L,L_{i},i=1,2,3)$. Then one can easily verify the bijectivity of the quotient map by choosing an isomrophism between a given curve $C$ of genus one and its Jacobian  $Jac^{2}(C)$. \par 
	To see (2), note that for a given class of datum $(E,P,P_{1},P_{2},P_{3})$ as in Proposition A.2, the induced $S_{3}$-action on the datum is given by the corresponding permutation on $P_{1},P_{2}$ and $P_{3}$. Then following the descripition in Proposition A.2, one sees that the $S_{3}$-action is induced by an $S_{3}$-action on $\BP(1,2,2,3)$ defined by:
	\begin{align*}
	(12).c_{1}&=c_{1}, (12).c_{2}=c_{2}', (12).c_{2}'=c_{2}, (12).c_{3}=c_{3}+c_{1}(c_{2}'-c_{2});\\
	(123).c_{1}&=c_{1}, (123).c_{2}=c_{2}', (123).c_{2}'=c_{1}^{2}-c_{2}-c_{2}', (123).c_{3}=c_{3}+c_{1}(c_{2}'-c_{2}).
	\end{align*}
Then a direct calculation yields that the invariant  subring $\BC[c_{1},c_{2},c_{2}',c_{3}]^{S_{3}}=\BC[d_{1},d_{3},d_{4},d_{6}]$ is a polynomial ring in four generators $d_{1},d_{3},d_{4},d_{6}$ of degree 1,3,4,6 respectively. More precisely:
\begin{align}
	d_{1}&=c_{1}, \\ \nonumber 
	d_{3}&=c_{1}c_{2}-c_{3},\\ \nonumber d_{4}&=c_{2}^{2}+c_{2}'^{2}+c_{2}c_{2}'-c_{1}^{2}(c_{2}+c_{2}'),\\ \nonumber
	 d_{6}&=c_{2}c_{2}'(c_{2}+c_{2}'-c_{1}^{2}) \nonumber.
	\end{align}
The above construction induces an open embedding of $M=M_{D_{4}}//S_{3}$ into $\BP(1,2,2,3)//S_{3} \cong \BP(1,3,4,6)$.
Note that the $S_{3}$-action on a given datum $(C,L,L_{1},L_{2},L_{3})$ fixes the defining equation of the elliptic curve, whence it fixes $\Delta=4c_{4}^{3}+27c_{6}^{2}$. This implies that the image of the open embedding of $M$ into $\BP(1,3,4,6)$ equals to the principle open subset $D_{+}(\Delta)$.
\end{proof}
\section*{Acknowledgment}
I am grateful to Thomas Dedieu and Laurent Manivel for kindly introducing me this problem, and for their helpful discussions. I am particularly indebted to Laurent Manivel for valuable suggestions on this article, and for providing me the important reference \cite{BH}.  I would like to thank Baohua Fu for his constant support and for his suggestions on this article. 
Most part of this article was written during my visit at Toulouse Mathematics Institute in 2023, supported by a scholarship from the University of Chinese Academy of Sciences. I would like to thank Toulouse Mathematics Institute for its hospitality.
	
\end{document}